\newcommand{\ec}{\color{black}}
\newcommand{\co}{\boldsymbol{c}}
\newcommand{\QInv}{\text{\rm QInv}}
\newcommand{\Cl}{\text{\rm Cl}}
\newcommand{\G}{\text{\rm G}}
\newcommand{\Pic}{\text{\rm Pic}}
\newcommand{\Prin}{\text{\rm Prin}}
\newcommand{\Inv}{\text{\rm Inv}}
\newcommand{\calT}{\boldsymbol{\mathcal{T}}}
\newcommand{\f}{\boldsymbol{f}}
\newcommand{\F}{\boldsymbol{F}}
\newtheorem{theorem}{Theorem}[section]
\newtheorem{lemma}[theorem]{Lemma}
\newtheorem{corollary}[theorem]{Corollary}
\newtheorem{example}[theorem]{Example}
\newtheorem{remark}[theorem]{Remark}
\newtheorem{proposition}[theorem]{Proposition}
\title[P$\star$MD's  and class groups]{Some
remarks  on Pr\"{u}fer $\star $--multiplication domains
 and class groups}
\author[D.F. Anderson, M. Fontana, and M. Zafrullah]{David Anderson, Marco Fontana 
 and Muhammad Zafrullah}
\address{D.A.: Department of Mathematics, University of
Tennessee at Knoxville, Knoxville, TN 37996-1300.}
\email{anderson@math.utk.edu}
\address{M.F.: \ Dipartimento di Matematica, Universit\`a degli Studi
``Roma Tre'', Rome, Italy }
\email{fontana@mat.uniroma3.it }
\address{M.Z.: \ 57 Colgate Street, Pocatello, ID 83201, USA}
\email{mzafrullah@usa.net}
\subjclass[2000]{Primary: 13A15, 13C20; Secondary: 13F05, 13G05, 13A18, 13F30 }
\keywords{star operation, class group, Pr\"ufer $v$-multiplication domain, content formula, valuation domain.}
\thanks{\it Acknowledgments. \rm During the preparation of this paper, the second named author
was partially supported by  a grant PRIN-MiUR. \\
%MIUR, under Grant PRIN 2005-015278.  
$\mbox {\ \ \ }$ A reference to \cite{[FJS]} by Said El-Baghdadi turned out to be the starting point of this paper. We are grateful to him. \  We also would like to thank the referee for several helpful suggestions.}
\date{\today }
\begin{document}

\begin{abstract}
  Let $D$ be an integral domain with quotient field $K$ and let $X$ be an
indeterminate over $D$. Also, let   $\boldsymbol{\mathcal{T}}:=\{T_{\lambda }\mid  \lambda
 \in \Lambda \}$ be a
defining family of quotient rings of $D$ and suppose that $\ast $ is a
finite type star operation on $D$ induced by $\boldsymbol{\mathcal{T}}$.  We show that $D$ is a P$
\ast $MD (resp., P$v$MD) if and only if $(\co_D(fg))^{\ast }=(\co_D(f)\co_D(g))^{\ast }$
(resp., $(\co_D(fg))^{w }=(\co_D(f)\co_D(g))^{w }$) for all $0 \ne f,g \in K[X]$.  A more general version of this result is given in the semistar operation  setting. We
give a method for recognizing P$v$MD's which are not P$\ast $MD's for a
certain finite type star operation $\ast $.  We   study  domains $D$
for which the $\ast $--class group $\Cl^{\ast }(D)$ equals the $t$--class
group $\Cl^{t}(D)$ for any finite   type   star operation $\ast $, and we
indicate examples of P$v$MD's $D$ such that $\Cl^{\ast }(D)\subsetneq
\Cl^{t}(D)$.   We also compute  $\Cl^v(D)$  for certain
valuation domains  $D$.  
\end{abstract}

\maketitle

%\thanks{2000 Mathematics Subject Classification : 13A15, 16W99.}
%\thanks{Key Words and Phrases : $w$-integral closure.}

\section*{Introduction and Background}

Let $D$ be an integral domain with quotient field $K$. Let $
\boldsymbol{\overline{F}}(D)$ be the set of all nonzero
$D$--submodules of $K$ and let $\boldsymbol{F}(D)$ be the set of
all nonzero fractional ideals of $D$, i.e., $E \in
\boldsymbol{F}(D)$ if $E \in \boldsymbol{ \overline{F}}(D)$ and
there exists a $0 \ne d \in D$ with $dE \subseteq D$. Let
$\boldsymbol{f}(D)$ be the set of all nonzero finitely generated
$D$--submodules of $K$. Then, obviously $\boldsymbol{f}(D)
\subseteq \boldsymbol{F}(D) \subseteq
\boldsymbol{\overline{F}}(D)$.

A \emph{semistar operation} on $D$ is a map $\star: \boldsymbol{\overline{F}}
(D) \to \boldsymbol{\overline{F}}(D), E \mapsto E^\star$, such that the
following properties hold for all $0 \ne x \in K$ and all $E,F \in
\boldsymbol{\overline{F}}(D)$:

\begin{enumerate}
\item[$(\star_1)$] $(xE)^\star=xE^\star$;

\item[$(\star_2)$] $E \subseteq F$ implies $E^\star \subseteq F^\star$;

\item[$(\star_3)$] $E \subseteq E^\star$ and $E^{\star \star}   :=  \left(E^\star
\right)^\star=E^\star$.
\end{enumerate}

Given a semistar operation $\star$ on $D$, the following basic formulas,
which hold for all $E,F \in \boldsymbol{\overline{F}}(D)$, follow easily
from the axioms:
\begin{equation*}
\begin{array}{rl}
(EF)^\star = & \hskip -7pt (E^\star F)^\star =\left(EF^\star\right)^\star
=\left(E^\star F^\star\right)^\star\,; \\
(E+F)^\star = & \hskip -7pt \left(E^\star + F\right)^\star= \left(E +
F^\star\right)^\star= \left(E^\star + F^\star\right)^\star\,; \\
(E:F)^\star \subseteq & \hskip -7pt (E^\star :F^\star) = (E^\star :F)  =
\left(E^\star :F\right)^\star,\;\, \mbox{\rm if \ } (E:F) \neq (0)\,; \\
(E\cap F)^\star \subseteq & \hskip -7pt E^\star \cap F^\star = \left(E^\star
\cap F^\star \right)^\star. 
\end{array}
\end{equation*}
\noindent (Cf. for instance \cite[Theorem 1.2 and p. 174]{[FH2000]}.)

A \emph{(semi)star operation} is a semistar operation which when restricted
to $\boldsymbol{F}(D)$ is a star operation (the reader may consult    \cite[Sections
32 and 34]{[G]}   for a quick review of star
operations,  which are denoted by the symbol $\ast$).   It is easy to see that a semistar operation $\star$ on $D$ is a
(semi)star operation if and only if $D^\star = D$.

{ If $\star$ is a semistar operation on $D$, then there is a map\
$\star_{\!_f}: \boldsymbol{\overline{F}}(D) \to \boldsymbol{
\overline{F}}(D)$ defined as follows:
\begin{equation*}
E^{\star_{\!_f}}:=\bigcup \{F^\star\mid \ F \in \boldsymbol{f}(D)
\mbox{ and } F \subseteq E\} \;\; \;\; \mbox{  for all } E \in
\boldsymbol{\overline{F}}(D)  .
\end{equation*}
\noindent It is easy to see that $\star_{\!_f}$ is a semistar
operation on $ D $, called the \emph{semistar operation of finite
type associated to $\star$}. Note that $F^\star=F^{\star_{\!_f}}$
for all $F \in \boldsymbol{f}(D)$. A semistar operation $\star$ is
called a \emph{semistar operation of finite type} (or a
\emph{semistar operation of finite character}) if $
\star=\star_{\!_f}$. It is easy to see that
$(\star_{\!_f}\!)_{\!_f}=\star_{ \!_f}$ (i.e., $\star_{\!_f}$ is
of finite type). }

\medskip

If $\star_1$ and $\star_2$ are two semistar operations on $D$, we
say that $ \star_1 \leq \star_2$ \ if $E^{\star_1} \subseteq
E^{\star_2}$ for all $E \in \boldsymbol{\overline{F}}(D)$. This is
equivalent to saying that $ \left(E^{\star_{1}}\right)^{\star_{2}}
 =  E^{\star_2}  =
\left(E^{\star_{2}}\right)^{\star_{1}}$ for all $E \in
\boldsymbol{\overline{F}}(D)$. Obviously, for  any   semistar
operation $\star$ on $D$, we have $\star_{\!_f} \leq \star$, and
if $\star_1 \leq \star_2$, then $ (\star_1)_{{\!_f}} \leq
(\star_2)_{{\!_f}}$.

\medskip

Let $I\subseteq D$ be a nonzero ideal of $D$ and let $\star $ be a
semistar operation on $D$. We say that $I$ is a
\textit{quasi--$\star $--ideal} (resp., \it $\star $--ideal\rm) of $D$ if
$I^{\star }\cap D=I$ (resp., $I^{\star }=I $). \ Similarly, we
call a quasi--$\star $--ideal (resp.,  $\star $--ideal) of $D$ a
\it  quasi--$\star $--prime \rm  (resp., \it $\star $--prime\rm) \it
 ideal  \rm  of $D$ if it is also a prime ideal. We call a
maximal element in the set of all proper quasi--$\star $--ideals
(resp., $\star $--ideals) of $D$ a  \it  quasi--$\star $--maximal \rm  (resp., \it  $\star $--maximal\rm)  \it ideal  \rm
of $D$. Note that if $I\subseteq D$ is a $\star $--ideal, then it is also
a quasi--$ \star $--ideal, and when $\,D=D^{\star }$ (i.e., when
$\star $ is a(semi)star operation), the notions of quasi--$\star
$--ideal and $\star $--ideal coincide.

It is not hard to prove that a quasi--$\star $--maximal ideal is a
prime ideal and that each proper quasi--$\star _{_{\!f}}$--ideal
is contained in a quasi--$\star _{_{\!f}}$--maximal ideal. More
details can be found in \cite[page 4781]{[FL03]}.
We will denote the set of quasi--$\star $--prime (resp., $\star
$--prime) ideals of $D$ by $\text{QSpec}^{\star }(D)$ (resp.,
$\text{Spec}^{\star }(D)$) and the set of quasi--$\star $--maximal
(resp., $\star $--maximal) ideals of $D$ by $\text{QMax}^{\star
}(D)$ (resp., $\text{Max}^{\star }(D)$). By the previous
observations, we have that $\text{QMax}^{\star _{_{\!f}}}(D)$
(resp., $\text{Max}^{\star _{_{\!f}}}(D)$) is non-empty for each
semistar (resp., (semi)star) operation $\star $ on $D$.

\medskip

If $T$ is an overring of $D$, then we can define a semistar
operation $\star _{\{T\}}$  on $D$  by $E^{\star _{\{T\}}}:=ET$
for all $E\in \boldsymbol{\overline{F}}(D)$. It is easily seen
that $\star _{\{T\}}$ is a semistar (non (semi)star, if
$D\subsetneq T$) operation of finite type  on $D$.

If $\{\star _{\lambda }\mid \lambda \in \Lambda \}$ is a family of semistar
operations on $D$, then $\wedge \{\star _{\lambda }\mid \lambda \in \Lambda
\}$ is the semistar operation on $D$ defined as follows:
\begin{equation*}
E^{\wedge \{\star _{\lambda }\mid \lambda \in \Lambda \}}:=\bigcap
\{E^{\star _{\lambda }}\mid \lambda \in \Lambda \}   \mbox{ \;  for
all } E \in \boldsymbol{\overline{F}}(D).
\end{equation*}
In particular, if $\boldsymbol{\mathcal{T}}:=\{T_{\lambda }\mid  {\lambda
}  \in \Lambda \}$ is a given family of overrings of $D$, then $\wedge _{
\boldsymbol{\mathcal{T}}}$ denotes the semistar operation $\wedge \{\star
_{\{T_{\lambda }\}}\mid \lambda \in \Lambda \}$.

Let $\Delta $ be a set of prime ideals of an integral domain $D$
and $ \boldsymbol{\mathcal{L}}(\Delta )$ the set of localizations
$\{D_{P}\mid P\in \Delta \}$. The semistar operation $\star
_{\Delta }:=\wedge _{ \boldsymbol{\mathcal{L}}(\Delta )}$ is
called the  \it  spectral semistar operation associated to
$\Delta $. \rm   A semistar operation $\star $ on an integral
domain $D$ is called a \it  spectral semistar operation  \rm
if there exists a subset $\Delta $ of the prime spectrum
$\text{Spec}(D)$ of $D$ such that $ \star  =   \star
_{\Delta }$. Note that   for  $\Delta =\emptyset $, we set
$\star _{\Delta }:=\star _{\{K\}}$, where $K$ is the quotient
field of $D$.

When $\Delta :=\Delta (\star _{_{\!f}}):=\text{QMax}^{\star _{_{\!f}}}(D)$,
we set $\widetilde{\star }:=\star _{\Delta (\star _{_{\!f}})}$, i.e.,
\begin{equation*}
E^{\widetilde{\star }}:=\bigcap \left\{ ED_{M}\,|\,M\in \text{QMax}^{\star
_{_{\!f}}}(D)\right\}\;\;\;\;  \mbox{\rm  for all $E \in
\boldsymbol{\overline{F}}(D)$}.
\end{equation*}
\vskip 8pt

A semistar operation $\star$ is said to be \emph{stable} if $
(E \cap F)^\star = E^\star \cap F^\star$ for all $E,F \in \boldsymbol{\overline{F}}(D)$.
 Note that if $T$ is an overring of an integral domain $D$, then $
\star_{\{T\}}$ is stable if and only if $T$ is $D$--flat (cf.
\cite[Proposition 1.7]{Uda} and \cite[Theorem 7.4(i)]{matsumura}).
Clearly, if $ \{\star_{\lambda} \mid \lambda \in \Lambda \}$ is a
family of stable semistar operations on $D$, then $\wedge
\{\star_{\lambda} \mid \lambda \in \Lambda \}$ is also a stable
semistar operation on $D$. In particular, if $
\boldsymbol{\mathcal{T}}$ is a family of flat overrings of $D$,
then $ \wedge_{\boldsymbol{\mathcal{T}}}$ is  a stable semistar
operation on $D$.  Thus  every spectral semistar operation is
stable (cf. also \cite[Lemma 4.1(3)]{[FH2000]}).

It is  well known  that the semistar operation $\widetilde{\star
}$ is a stable semistar operation of finite type \cite[Corollaries
3.9 and 4.6]{[FH2000]}. We call $\widetilde{\star }$ the
\textit{stable semistar operation of finite type associated to
$\star $. }Furthermore, it is not hard to prove that $
\text{QMax}^{\widetilde{\star }}(D)=\text{QMax}^{\star
_{_{\!f}}}(D)$ \cite[Corollary 3.5(2)]{[FL03]}; thus
$\widetilde{\widetilde{\star }}=\widetilde{ \star }=\widetilde{\
\star _{_{\!f}}}$. Clearly $\widetilde{\star }\leq \star $, and
since $(\widetilde{\star })_{_{\!f}}=\widetilde{\star }$, then $
\widetilde{\star }\leq \star _{_{\!f}}\leq \star $. Moreover, it
is known that if $\star _{1}\leq \star _{2}$, then $\widetilde{\
\star _{1}}\leq \widetilde{\ \star _{2}}$ \cite[Propositions 3.1
and 3.4(3)]{[FL03]}.

\medskip

For each $E\in \boldsymbol{\overline{F}}(D)$, set $E^{-1}  :=
 (D:E):=\{z\in K\mid zE\subseteq D\}$. Clearly  $E\in
\boldsymbol{\overline{F}} (D)\!\setminus \!\boldsymbol{{F}}(D)$ if
and only if $E^{-1}=\{0\}$. As usual, we let  $ v_{D}$ (or just
$v$) denote the $v$--(semi)star operation defined by
$E^{v}:=(D:(D:E))=\left( E^{-1}\right) ^{-1}$ for all $E\in
\boldsymbol{\overline{F}}(D)$. (Note that $E\in
\boldsymbol{\overline{F}} (D)\!\setminus \!\boldsymbol{{F}}(D)$
implies that $E^{v}=K$.)\ We denote $(v_{D})_{_{\!f}}$ by $t_{D}$
(or just by $t$), the $t$--(semi)star operation on $D$; and we
denote the stable semistar operation of finite type associated to
$v_{D}$ (or, equivalently, to $t_{D}$) by $w_{D}$ (or just by
$w$), i.e., $ w_{D}:=\widetilde{v_{D}}=\widetilde{t_{D}}$. Clearly
$w_{D}\leq t_{D}\leq v_{D}$. Moreover, from \cite[Theorem
34.1(4)]{[G]}, we immediately deduce that $\star \leq v_{D}$, and
thus $\widetilde{\star }\leq w_{D}$ and $\star _{_{\!f}}\leq
t_{D}$, for each (semi)star operation $\star $ on $D$.

\medskip

\noindent \bf Remark. \rm Note that the (semi)\-star
operation $\,\widetilde{v}\,$ coincides with the (semi)star operation
defined as follows:
\begin{equation*}
E^{w}:=\bigcup \{(E:H)\;|\;\,H\in \boldsymbol{f}(D)\mbox{ and
}H^{v}=D\}\,  \;\;\mbox{ for all }E\in
\boldsymbol{\overline{F}}(D)\,.
\end{equation*}
In the \textquotedblleft star operation setting\textquotedblright, this operation was first considered by J. Hedstrom and E.
Houston in 1980 \cite[ Section 3]{Hedstrom/Houston: 1980} under
the name of the F$_{\infty }$--operation. Later, from 1997, this
operation was intensively studied by F. Wang and R. McCasland (cf.
\cite{[FM]} and \cite{Fanggui/McCasland:1999}) under the name of
the \it  $w$--operation. \rm Also note that the notion of $w$--ideal
coincides with the notion of semi-divisorial ideal considered by
S. Glaz and W. Vasconcelos in 1977 \cite{Glaz/Vasconcelos:1977}.
Finally, in 2000, for each star operation $\ast $  on $D$,
D.D. Anderson and S.J. Cook \cite {[AC]} considered the star
operation $\ast_{w}$  on $D$   defined as follows:
\begin{equation*}
E^{\ast_{w}}:=\bigcup \{(E:H)\;|\;\,H\in \boldsymbol{f}(D)\mbox{
and } H^{\ast }=D\}\,  \;\;\mbox{ for all }E\in
\boldsymbol{{F}}(D)\,.
\end{equation*}
It can be shown that when  $\star = \ast $   is a star operation, then
 $\ast _{w}$  coincides with $\widetilde{\ast}$  (defined in the
obvious way as a star operation on $\boldsymbol{{F}}(D)$)
\cite[Corollary 2.10]{[AC]}.

\mbox{}  Finally, note that a deep link between the semistar
operations of type $\widetilde{\star }\,$ and localizing systems of ideals
was established by M. Fontana and J. Huckaba in \cite{[FH2000]}.

\bigskip

Let $\star $ be a semistar operation on  the   integral domain $D$.

For $I\in \boldsymbol{\overline{F}}(D)$, we say that $I$ is  \it
$\star $--finite  \rm if there exists a $J\in \boldsymbol{f}(D)$
such that $J^{\star }=I^{\star }$. (Note that in the  above  
definition, we do not require that $J\subseteq I $.) It is
immediate to see that if $\star _{1}\leq \star _{2}$ are semistar
operations and $I$ is $\star _{1}$--finite, then $I$ is $\star
_{2}$ --finite. In particular, if $I$ is $\star
_{\!_{f}}$--finite, then it is $ \star $--finite. The converse is
not true in general, and one can prove that $I$ is $\star
_{\!_{f}}$--finite if and only if there exists
 $J\in \boldsymbol{f}(D)$, $J\subseteq I$, such that $J^{\star
}=I^{\star }$ \cite[Lemma 2.3]{FP}. This result was proved in the star
operation setting by M. Zafrullah in \cite[Theorem 1.1]{[Z1989]}.

For $I$  a nonzero ideal of $D$, we say that $I$ is \it{$\star
$--invertible} \rm if $(II^{-1})^{\star }=D^{\star }$. From the
fact that $\text{QMax}^{\widetilde{ \star }}(D)=\text{QMax}^{\star
_{\!_{f}}}(D)$, it easily follows that an ideal $I$ is
$\widetilde{\star }$--invertible if and only if $I$ is $\star
_{_{\!f}}$--invertible (note that if $\boldsymbol{\star} $ is a semistar
operation of finite type, then $(II^{-1})^{\boldsymbol{\star} }=D^{\boldsymbol{\star}  }$ if
and only if $ II^{-1}\not\subseteq M$ for all $M\in
\text{QMax}^{\boldsymbol{\star} }(D)$). It is   well known   that if $I$ is $\star
_{_{\!f}}$--invertible, then $I$ and $I^{-1}$ are both $\star
_{_{\!f}}$--finite \cite[Proposition 2.6]{FP}.

\medskip

An integral domain $D$ is called a \it  Pr\"{u}fer $\star
$--multiplication domain  \rm (for short,  \it P$\star $MD\rm) if
every nonzero finitely generated ideal of  $D$ is $\star
_{_{\!f}}$--invertible (cf. for instance \cite{[FJS]}). Note that 
    for $\star = \ast$   a   star operation of finite type on $D$,
P$\ast$MD's were intro\-duced by Houston, Malik, and Mott in
\cite{[HMM]} as $\ast$--multiplication domains (for short,
$\ast$--MD's).  When $\star = v$, we have the classical notion
of P$v$MD (cf. \!for in\-stance \cite{Gr}, \cite{MZ} and \cite{K89b});
when $\star =d$, where $d$ denotes the identity (semi)star
operation, we have the notion of Pr\"{u}fer domain \cite[Theorem
22.1]{[G]}. Note that from the definition and from the previous
observations, it immediately follows that the notions of P$\star
$MD, P$\star _{_{\!f}}$MD,  and P$ \widetilde{\star }$MD coincide.

\medskip

Let $K$ be the quotient field of an integral domain $D$ and let
$X$ be an indeterminate over $K$. For each $0\neq h\in K[X]$, we
denote by $ \boldsymbol{c}_{D}(h)$ the \it content of $h$ with
respect to $D$, \rm  i.e., the $D$--submodule of $K$ generated by
the coefficients of $h$. Clearly $\boldsymbol{c} _{D}(h)\in
\boldsymbol{f}(D)$, and if $T$ is an overring of $D$, then $
\boldsymbol{c}_{T}(h)=\boldsymbol{c}_{D}(h)T$.

Gauss' Lemma for the content of polynomials holds for Dedekind domains (or,
more generally, for Pr\"{u}fer domains).  A more precise
statement is the following:

\medskip
 \noindent  \bf  Gauss-Gilmer-Tsang Theorem  \rm
\cite[Corollary 28.5]{[G]}. \it Let $D$ be an integral domain with
quotient field $K$. Then $D$ is a Pr\"{u}fer domain if and only if
$\boldsymbol{c}
_{D}(fg)=\boldsymbol{c}_{D}(f)\boldsymbol{c}_{D}(g)$   for all $0
\ne f,g\in K[X]$. \rm
\medskip

\noindent \bf  Remark. 
 \rm   W. Krull \cite[page 557]{[Krull]}
showed that if $D$ is an integrally closed domain with quotient
field $K$, then  we have
$(\boldsymbol{c}_{D}(fg))^{v}=(\boldsymbol{c}_{D}(f)\boldsymbol{c}_{D}(g))^{v}$
 for all $0 \ne f,g\in K[X]$, and called it Gauss' Theorem.
Obviously the currently known Gauss' Lemma (that goes as: the
product of two primitive polynomials over   a UFD is again primitive \cite[page 165]{[Co]})   and  Gauss'  own statement  
(let $f$ and $g$ be monic polynomials in
one indeterminate with rational coefficients, if the coefficients of $f$  and 
$g$ are not all integers,  then the coefficients of $fg$ are not all integers
\cite[page 1]{[Ed]})  
follow from Krull's above-mentioned result. 
%We are not clear about
%the grounds on which Krull chose to call his result Gauss'
%Theorem. But it was Krull's result,  so  if he chose to
%attribute it to Gauss, then we have no choice but to accept it.
 As
pointed out before the statement of the above theorem, Krull's
Gauss' Theorem also holds for Pr\"{u}fer domains; because for $D$
a Pr\"{u}fer domain with quotient field $K$, we have
$(\boldsymbol{c}_{D}(f))^{v}=\boldsymbol{c}_{D}(f)$ for  all
  $ 0 \ne f\in K[X].$ Thus Krull's Gauss' Theorem for
Pr\"{u}fer domains becomes: if $D$ is a Pr\"{u}fer domain with
quotient field $K$, then
$\boldsymbol{c}_{D}(fg)=\boldsymbol{c}_{D}(f)
\boldsymbol{c}_{D}(g)$  for  all   $0 \ne f,g\in K[X]$. The
converse of this statement was included in H. Tsang's unpublished
dissertation \cite {[Tsang]}. This result was later, and
independently, rediscovered by R. Gilmer and published in
\cite{[Gi]}. Since neither of these authors attributed their
result to Gauss,  we feel it appropriate to include their names
with Gauss' name. 
%Now the converse of Krull's Gauss' Theorem is
%another story. It may be attributed to J. Querr\'e \cite{[Q]},
%even though D.D. Anderson \cite{[A1]} points out that the converse
%of Krull's Gauss' Theorem was proved by H. Flanders in \cite{[Fl]}
%much earlier than  Querr\'e.  Our reason for believing that the
%result may still be attributed to  Querr\'e
%  is that  Querr\'e's  symbolism is closer to that of
%Krull's. 
For more on the history of Gauss' Lemma, the reader may
consult Anderson \cite{[A1]}.

\medskip

For general integral domains, we always have the inclusion of
ideals $ \boldsymbol{c}_{D}(fg)\subseteq
\boldsymbol{c}_{D}(f)\boldsymbol{c}_{D}(g)$, and more precisely we
have the following:

\medskip

\noindent  \bf Dedekind--Mertens Lemma  \rm  \cite[Theorem
28.1]{[G]}. \it  Let $ 0 \ne f,g \in K[X]$ and let $m:=\deg (g)$.
Then
\begin{equation*}
\boldsymbol{c}_{D}(f)^{m}\boldsymbol{c}_{D}(fg)=\boldsymbol{c}_{D}(f)^{m+1}
\boldsymbol{c}_{D}(g)\,.
\end{equation*} \rm
\smallskip

In Section 1, we prove a semistar extension of  the
Gauss-Gilmer-Tsang Theorem  (as stated above), i.e., we show that if $\star $
is a stable semistar operation  of finite type
defined on an integral domain $D$,  then $D$ is a P$\star $MD if
and only if
$\boldsymbol{c}_{D}(fg)^{\star}=(\boldsymbol{c}_{D}(f)\boldsymbol{c}_{D}(g))^{\star
}$    for all   $ 0 \ne f,g\in K[X]$.  Using this result, we show
that there is an abundance of P$v$MD's which are not P$\star $MD's
for appropriate stable (semi)star operations $\star $ of finite
type on $D$.

 For
a finite type star operation $\ast $ on $D$, let $\Inv^\ast(D)$ be the
group of $\ast $--invertible $\ast $--ideals of $D$ under $\ast
$--multiplication and let $ \Prin(D)$ be the subgroup of nonzero
principal fractional ideals of $D$. Call $ \Cl^{\ast }(D):=
\Inv^\ast(D)/\Prin(D)$ the $\ast $--class group of  $D$. The $\ast
$--class groups were discussed in \cite{[An]}. 

 In Section 2, we study the $\ast $--class group and identify a situation in
which for every finite type star operation $\ast $ on  $D$, we
have $\Cl^{\ast }(D)=\Cl^{t}(D)$; and using the results of Section
1,  we give examples of integral domains   $D$  for which $\Cl^{\ast
}(D)\subsetneq \Cl^{t}(D)$ for some finite type star operation
$\ast$ on $D$. 

In Section 3, we deepen the study of the $v$--class group  with  special    attention to the case of valuation domains.  In particular, we compute
$\Cl^v(D)$  when  $D$  is a valuation domain with branched maximal ideal.

%% SECTION 1 %%
%%%%%%%%%%%%%%%%%%%%%%%%%%%%
\section{Pr\"{u}fer $\star $--multiplication domains}
%%%%%%%%%%%%%%%%%%%%%%%%%%%%

 With all the introduction at hand, we start right away with the
promised characterization of P$\star $MD's. Using Theorem \ref{Theorem A'}
below, we conclude that $D$ is a P$v$MD if and only if $\boldsymbol{c}
_{D}(fg)^{w}=(\boldsymbol{c}_{D}(f)\boldsymbol{c}_{D}(g))^{w}$ for all $
0\neq f,g\in K[X]$. Also, using the proof of Theorem \ref{Theorem A'}, we
give a method for recognizing a P$v$MD which has a stable (semi)star
operation $\star $ of finite type such that $D$ is not a P$\star $MD.

% THEOREM 1.1
\begin{theorem}
\label{Theorem A'} Let $D$ be an integral domain with quotient field $K$,
let $X$ be an indeterminate over $K$, and let $\star $ be a stable semistar
operation of finite type defined on $D$. Then the following are equivalent:

\begin{enumerate}
\item[(i)] $\boldsymbol{c}_{D}(fg)^{\star }=(\boldsymbol{c}_{D}(f)
\boldsymbol{c}_{D}(g))^{\star }$ for all $0\neq f,g\in K[X]$.

\item[(ii)]  $D_{M}$ is a valuation domain for all $M\in \text{\rm QMax}^{\star
}(D)$.

\item[(iii)] $D$ is a P$\star $MD.
\end{enumerate}
\end{theorem}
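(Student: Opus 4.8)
The plan is to establish the cycle of implications $(iii) \Rightarrow (ii) \Rightarrow (i) \Rightarrow (iii)$, reducing everything to the local behavior at quasi-$\star$-maximal ideals, which is legitimate precisely because $\star$ is stable of finite type (so $E^\star = \bigcap\{ED_M \mid M \in \QMax^\star(D)\}$ and $\star$-invertibility of a finitely generated ideal is detected locally at the $M \in \QMax^\star(D)$).

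\smallskip

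For $(iii) \Rightarrow (ii)$: assume $D$ is a P$\star$MD and fix $M \in \QMax^\star(D)$. I would show $D_M$ is a valuation domain by showing every nonzero finitely generated (equivalently, every two-generated) ideal of $D_M$ is invertible, hence principal, in $D_M$. If $I = (a_1, \dots, a_n)D$ is nonzero finitely generated, then $(II^{-1})^\star = D^\star = D$ (the last equality since $\star$ is a (semi)star-type on the relevant ideals — or more carefully, $II^{-1} \not\subseteq M$ for every $M \in \QMax^\star(D)$, using the characterization recalled in the excerpt), so $I D_M (I D_M)^{-1} = D_M$ by localizing; since $D_M$ is quasilocal, an invertible ideal is principal, so every finitely generated ideal of $D_M$ is principal and $D_M$ is a valuation domain. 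A small point to check is that $(ID_M)^{-1} = I^{-1} D_M$ for $I$ finitely generated, which holds because $D_M$ is a localization (flat).

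\smallskip

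For $(ii) \Rightarrow (i)$: assume $D_M$ is a valuation domain for all $M \in \QMax^\star(D)$. Since $\star$ is stable of finite type, for any $E \in \boldsymbol{f}(D)$ we have $E^\star = \bigcap_M E D_M$. Thus it suffices to show $\co_D(fg)D_M = \co_D(f)\co_D(g)D_M$ for every such $M$. But $\co_D(h)D_M = \co_{D_M}(h)$, and over the valuation domain $D_M$ the classical Gauss-Gilmer-Tsang Theorem (Corollary 28.5, quoted above) — or directly the fact that over a valuation domain the content of a product equals the product of the contents — gives $\co_{D_M}(fg) = \co_{D_M}(f)\co_{D_M}(g)$. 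Intersecting over all $M \in \QMax^\star(D)$ yields $(i)$.

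\smallskip

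For $(i) \Rightarrow (iii)$: assume the content formula holds. I want to show every nonzero finitely generated ideal $I = (a_0, \dots, a_n)D$ is $\star$-invertible. Form $f = a_0 + a_1 X + \cdots + a_n X^n$, so $\co_D(f) = I$. The standard trick (going back to Gilmer's proof) is to produce a $g$ so that $\co_D(fg)$ is principal, or at least so that $\co_D(f)\co_D(g)$ becomes $\star$-principal; concretely one takes $g = a_0^n - a_1^n X + a_2^n X^2 - \cdots$ or uses the Dedekind–Mertens Lemma to arrange $\co_D(f)^{m}\co_D(fg) = \co_D(f)^{m+1}\co_D(g)$ and combines it with $(i)$ to get a relation forcing $(I J)^\star$ principal for a suitable $J$, whence $I$ is $\star$-invertible after passing to $\star$-closures. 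The main obstacle — and the step I expect to require the most care — is exactly this implication: manufacturing, from the abstract content identity $\co_D(fg)^\star = (\co_D(f)\co_D(g))^\star$, a companion polynomial $g$ whose content is a $\star$-inverse (up to $\star$ and a principal factor) of $\co_D(f)$. One clean route is: it suffices to check $\star$-invertibility locally, i.e. show $I D_M$ is invertible for all $M \in \QMax^\star(D)$; applying $(i)$ and localizing at $M$ gives $\co_{D_M}(fg) = (\co_{D_M}(f)\co_{D_M}(g))^{\star D_M}$, and one then argues (via Dedekind–Mertens in $D_M$, which is valid over any domain) that this forces $D_M$ to be a valuation domain, reducing $(i) \Rightarrow (iii)$ to $(i) \Rightarrow (ii) \Rightarrow (iii)$ and thereby concentrating all the real work in the valuation-domain characterization by the content formula — essentially the local content of the Gauss-Gilmer-Tsang Theorem — plus the routine local-to-global dictionary for stable finite-type semistar operations.
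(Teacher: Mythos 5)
Your proof is correct, and the core of it is the same as the paper's: identify $\star$ with $\widetilde{\star}$ (so $F^{\star}D_M=FD_M$ for $F\in\boldsymbol{f}(D)$ and $E^\star=\bigcap\{ED_M\mid M\in\QMax^{\star}(D)\}$), and transfer everything to the valuation-domain/Gauss--Gilmer--Tsang picture at each $M\in\QMax^{\star}(D)$. Where you genuinely diverge is in how the cycle is closed. The paper proves (i)$\Rightarrow$(ii) by localizing the content identity (as you do), (ii)$\Rightarrow$(iii) by the local-to-global computation $(FF^{-1})^\star=\bigcap FD_M(FD_M)^{-1}=D^\star$, and then (iii)$\Rightarrow$(i) by a purely \emph{global} Dedekind--Mertens argument: from $\co_D(f)^m\co_D(fg)=\co_D(f)^{m+1}\co_D(g)$ one multiplies by $(F^m)^{-1}$ with $F=\co_D(f)$ and uses $(F^m(F^m)^{-1})^\star=D^\star$. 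That computation never uses stability, which is why the paper can recycle it verbatim in Proposition \ref{Prop-new1}((iii)$\Rightarrow$(i)) for non-stable operations $\wedge_{\boldsymbol{\mathcal{T}}}$. Your route instead proves (iii)$\Rightarrow$(ii) and (ii)$\Rightarrow$(i) by direct localization and routes (i)$\Rightarrow$(iii) through (ii); the net effect is that Dedekind--Mertens disappears entirely and all the content-formula work is delegated to the Gilmer--Tsang converse over the quasilocal rings $D_M$ --- a tidy trade, at the cost of leaning on stability in every implication. Two small points to tighten: in (i)$\Rightarrow$(iii) the localized identity is not $\co_{D_M}(fg)=(\co_{D_M}(f)\co_{D_M}(g))^{\star D_M}$ for some induced operation but the plain equality $\co_{D_M}(fg)=\co_{D_M}(f)\co_{D_M}(g)$, precisely because $F^{\star}D_M=FD_M$ for finitely generated $F$ (this is the step where stability of finite type is indispensable, and it is what lets Gauss--Gilmer--Tsang apply); and the local tool there is the Gilmer--Tsang converse, not Dedekind--Mertens. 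With those adjustments your argument is complete.
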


\begin{proof}
By the observations in the previous section, we know that under
the present hypotheses, $\star =\widetilde{\star }$ \cite[Corollary 3.9(2)]{[FH2000]}, and thus
$F^{\star }D_{M}=FD_{M}$ for all $M\in \text{QMax}^{\star }(D)$
and $F\in \boldsymbol{f}(D)$.

(i)$\Rightarrow $(ii) Let $M\in \text{QMax}^{\star }(D)$. From
(i), we deduce that
$\boldsymbol{c}_{D_{M}}(fg)=\boldsymbol{c}_{D}(fg)D_{M}=
\boldsymbol{c}_{D}(fg)^{\star
}D_{M}=(\boldsymbol{c}_{D}(f)\boldsymbol{c} _{D}(g))^{\star
}D_{M}=\boldsymbol{c}_{D}(f)\boldsymbol{c}_{D}(g)D_{M}=
\boldsymbol{c}_{D_{M}}(f)\boldsymbol{c}_{D_{M}}(g)$. 
%The previous
%equality 
This implies that $D_{M}$ is a valuation domain (i.e., a local
Pr\"{u}fer domain) by the Gauss-Gilmer-Tsang Theorem.

(ii)$\Rightarrow $(iii) Let $F\in \boldsymbol{f}(D)$. Note that
for each flat overring $T$ of $D$, we have $F^{-1}T=(FT)^{-1}$.
 Also recall   that  for  all   $M\in
\text{QMax}^{\star }(D)$,   every  nonzero finitely generated  
ideal is invertible in the valuation domain $D_{M}$.
Therefore, we have that $ (FF^{-1})^{\star }=\bigcap
\{(FF^{-1})D_{M}\mid M\in \text{QMax}^{\star }(D)\}=\bigcap
\{FD_{M}F^{-1}D_{M}\mid M\in \text{QMax}^{\star }(D)\}=\bigcap
\{(FD_{M}(FD_{M})^{-1}\mid M\in \text{QMax}^{\star }(D)\}=\bigcap
\{D_{M}\mid M\in \text{QMax}^{\star }(D)\}=D^{\star }$.

(iii)$\Rightarrow $(i) By the Dedekind-Mertens Lemma, $\boldsymbol{c}
_{D}(f)^{m}\boldsymbol{c}_{D}(fg)=\boldsymbol{c}_{D}(f)^{m+1}\boldsymbol{c}
_{D}(g)$ for all $0\neq f,g\in K[X]$, where $m= \deg (g)$. In particular, we
have $(\boldsymbol{c}_{D}(f)^{m}\boldsymbol{c}_{D}(fg))^{\star }$ = $(
\boldsymbol{c}_{D}(f)^{m+1}\boldsymbol{c}_{D}(g))^{\star }$. Since $D$ is a P$\star $MD, if $F:=\boldsymbol{c}_{D}(f)\in \boldsymbol{f}(D)$, then $
(FF^{-1})^{\star }=D^{\star }=(F^{m}(F^{m})^{-1})^{\star }$. Therefore:
\begin{equation*}
\begin{array}{rl}
\boldsymbol{c}_{D}(fg)^{\star }= &\hskip -5pt ((F^{m}(F^{m})^{-1})^{\star }
\boldsymbol{c}_{D}(fg))^{\star } \\
= &\hskip -5pt (\boldsymbol{c}_{D}(f)^{m}(F^{m})^{-1}\boldsymbol{c}_{D}(fg))^{\star }=(
\boldsymbol{c}_{D}(f)^{m+1}(F^{m})^{-1}\boldsymbol{c}_{D}(g))^{\star } \\
= &\hskip -5pt
(F^{m}(F^{m})^{-1}\boldsymbol{c}_{D}(f)\boldsymbol{c}_{D}(g))^{\star
} = ((F^{m}(F^{m})^{-1})^{\star
}\boldsymbol{c}_{D}(f)\boldsymbol{c}
_{D}(g))^{\star } \\
= &\hskip -5pt (\boldsymbol{c}_{D}(f)\boldsymbol{c}_{D}(g))^{\star }.
\end{array}
\end{equation*}
\vskip -15pt \end{proof}

%COROLLARY 1.2
\begin{corollary}
\label{A1} Let $D$ be an integral domain with quotient field $K$, let $X$ be
an indeterminate over $K$, and let $\star $ be a semistar operation defined
on $D$. Then the following are equivalent:

\begin{enumerate}
\item[(i)] $\boldsymbol{c}_{D}(fg)^{\widetilde{\star }}=(\boldsymbol{c}
_{D}(f)\boldsymbol{c}_{D}(g))^{\widetilde{\star }}$ for all $ 0 \neq   f,g\in
K[X]$.

\item[(ii)]  $D_{M}$ is a valuation domain for all $M\in \text{\rm QMax}^{\star
_{_{\!f}}}(D)$.

\item[(iii)] $D$ is a P$\star $MD.
\end{enumerate}
\end{corollary}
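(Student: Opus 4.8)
The plan is to reduce Corollary \ref{A1} to Theorem \ref{Theorem A'} by passing from the arbitrary semistar operation $\star$ to its associated stable semistar operation of finite type $\widetilde{\star}$. First I would recall the two facts established in the Background that make this reduction work: $\widetilde{\star}$ is always a stable semistar operation of finite type (so Theorem \ref{Theorem A'} applies to it verbatim), and $\QMax^{\widetilde{\star}}(D)=\QMax^{\star_{_{\!f}}}(D)$, so condition (ii) of the corollary is literally condition (ii) of Theorem \ref{Theorem A'} written for the operation $\widetilde{\star}$. With these identifications in hand, Theorem \ref{Theorem A'} applied to $\widetilde{\star}$ gives the equivalence of the corollary's (i), the corollary's (ii), and the statement ``$D$ is a P$\widetilde{\star}$MD''.

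The remaining task is then purely bookkeeping about the various ``P$\,\cdot\,$MD'' notions. I would invoke the observation made in the Background that the notions of P$\star$MD, P$\star_{_{\!f}}$MD, and P$\widetilde{\star}$MD all coincide: a nonzero finitely generated ideal $F$ is $\star_{_{\!f}}$--invertible if and only if it is $\widetilde{\star}$--invertible, because $(FF^{-1})^{\widetilde{\star}}=D^{\widetilde{\star}}$ is equivalent to $FF^{-1}\not\subseteq M$ for all $M\in\QMax^{\widetilde{\star}}(D)=\QMax^{\star_{_{\!f}}}(D)$, which is the same condition that characterizes $\star_{_{\!f}}$--invertibility. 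Hence ``$D$ is a P$\widetilde{\star}$MD'' is the same as ``$D$ is a P$\star$MD'', which is the corollary's (iii). Stringing the equivalences together closes the loop: (i)$\Leftrightarrow$(ii)$\Leftrightarrow$(iii).

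I do not anticipate a genuine obstacle here; the corollary is essentially a repackaging of Theorem \ref{Theorem A'}. The one point that deserves a careful sentence rather than a wave of the hand is the identification of $\QMax$ sets: one must cite $\QMax^{\widetilde{\star}}(D)=\QMax^{\star_{_{\!f}}}(D)$ (from \cite[Corollary 3.5(2)]{[FL03]}) so that condition (ii) as stated — phrased in terms of $\QMax^{\star_{_{\!f}}}(D)$ — matches the hypothesis needed to run Theorem \ref{Theorem A'} on $\widetilde{\star}$. Everything else, including $(\widetilde{\star})_{_{\!f}}=\widetilde{\star}$ and the stability of $\widetilde{\star}$, is already recorded in the Background and can simply be quoted.

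\begin{proof}
Recall from the previous section that $\widetilde{\star}$ is a stable semistar operation of finite type and that $\QMax^{\widetilde{\star}}(D)=\QMax^{\star_{_{\!f}}}(D)$. Applying Theorem \ref{Theorem A'} to the operation $\widetilde{\star}$, we obtain that the following are equivalent: $\boldsymbol{c}_{D}(fg)^{\widetilde{\star}}=(\boldsymbol{c}_{D}(f)\boldsymbol{c}_{D}(g))^{\widetilde{\star}}$ for all $0\neq f,g\in K[X]$; $D_{M}$ is a valuation domain for all $M\in\QMax^{\widetilde{\star}}(D)$; and $D$ is a P$\widetilde{\star}$MD. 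Since $\QMax^{\widetilde{\star}}(D)=\QMax^{\star_{_{\!f}}}(D)$, the second of these is exactly condition (ii). Finally, as observed in the previous section, the notions of P$\star$MD, P$\star_{_{\!f}}$MD, and P$\widetilde{\star}$MD coincide, so the third condition is exactly condition (iii). This proves (i)$\Leftrightarrow$(ii)$\Leftrightarrow$(iii).
\end{proof}
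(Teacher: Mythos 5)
Your proof is correct and follows exactly the paper's own route: apply Theorem \ref{Theorem A'} to $\widetilde{\star}$ and then use $\QMax^{\widetilde{\star}}(D)=\QMax^{\star_{_{\!f}}}(D)$ together with the coincidence of the notions P$\star$MD, P$\star_{_{\!f}}$MD, and P$\widetilde{\star}$MD. Nothing is missing.
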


\begin{proof} Apply Theorem \ref{Theorem A'} to $\widetilde{\star }$, the stable
semistar operation of finite type associated to $\star $. Recall that  from $
\text{QMax}^{\star _{_{\!f}}}(D)=\text{QMax}^{\widetilde{\star }}(D)$, we
already deduced that the notions of P$\star $MD and P$\widetilde{\star}$MD coincide. \end{proof}

%COROLLARY 1.3
\begin{corollary}
\label{Corollary A2} Let $D$ be an integral domain and let $\star
$ be a semistar operation of finite type induced by a family
$\boldsymbol{ \mathcal{T}}$ of flat overrings of $D$, i.e., $\star
=\wedge _{ \boldsymbol{\mathcal{T}}}$. Then $D$ is a P$\star $MD
if and only if $\boldsymbol{c}_{T}(fg)=
\boldsymbol{c}_{T}(f)\boldsymbol{c}_{T}(g)$  for all  $0 \ne
f,g\in K[X]$ (i.e., $T$ is a Pr\"{u}fer domain) and all $T\in
\boldsymbol{\mathcal{T}}$.
\end{corollary}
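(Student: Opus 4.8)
The statement to prove is Corollary~\ref{Corollary A2}: for $\star = \wedge_{\boldsymbol{\mathcal{T}}}$ with $\boldsymbol{\mathcal{T}}$ a family of flat overrings of $D$, one has $D$ is a P$\star$MD $\iff$ every $T \in \boldsymbol{\mathcal{T}}$ is a Pr\"ufer domain (equivalently $\co_T(fg) = \co_T(f)\co_T(g)$ for all $0 \ne f,g \in K[X]$, by the Gauss-Gilmer-Tsang Theorem). The plan is to deduce this from Theorem~\ref{Theorem A'} applied to $\star$ itself. The first point to check is that $\star = \wedge_{\boldsymbol{\mathcal{T}}}$ satisfies the hypotheses of Theorem~\ref{Theorem A'}: it is of finite type by hypothesis, and it is stable because each $\star_{\{T\}}$ is stable (as each $T$ is $D$-flat, by the Uda / Matsumura reference quoted in the Background), and an arbitrary $\wedge$ of stable semistar operations is stable. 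Hence Theorem~\ref{Theorem A'} gives: $D$ is a P$\star$MD iff $\co_D(fg)^\star = (\co_D(f)\co_D(g))^\star$ for all $0 \ne f,g \in K[X]$.

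\textbf{Rewriting the content condition over $\boldsymbol{\mathcal{T}}$.} Next I would unwind the star operation $\star = \wedge_{\boldsymbol{\mathcal{T}}}$ on the finitely generated module $\co_D(fg)$. By definition $E^\star = \bigcap_{\lambda} ET_\lambda$, and since $\co_D(h)T_\lambda = \co_{T_\lambda}(h)$ for every $0 \ne h \in K[X]$ and every overring $T_\lambda$ (as recalled in the Background), we get
\[
\co_D(fg)^\star = \bigcap_{\lambda \in \Lambda} \co_{T_\lambda}(fg), \qquad
(\co_D(f)\co_D(g))^\star = \bigcap_{\lambda \in \Lambda} \co_{T_\lambda}(f)\co_{T_\lambda}(g),
\]
using $(\co_D(f)\co_D(g))T_\lambda = (\co_D(f)T_\lambda)(\co_D(g)T_\lambda) = \co_{T_\lambda}(f)\co_{T_\lambda}(g)$. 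Thus the content condition of Theorem~\ref{Theorem A'}(i) becomes $\bigcap_\lambda \co_{T_\lambda}(fg) = \bigcap_\lambda \co_{T_\lambda}(f)\co_{T_\lambda}(g)$ for all $0 \ne f,g \in K[X]$.

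\textbf{Closing the equivalence.} One direction is immediate: if each $T \in \boldsymbol{\mathcal{T}}$ is Pr\"ufer, then $\co_{T_\lambda}(fg) = \co_{T_\lambda}(f)\co_{T_\lambda}(g)$ termwise, so the two intersections coincide and $D$ is a P$\star$MD. For the converse, I would instead route through Theorem~\ref{Theorem A'}(ii): if $D$ is a P$\star$MD then $D_M$ is a valuation domain for every $M \in \QMax^\star(D)$. Each $T = T_\lambda$ is a flat overring of $D$, so its localizations at maximal ideals are of the form $D_{M}$ for suitable primes $M$ of $D$; moreover, flatness of $T$ forces each such $M$ to be a quasi-$\star$-ideal (indeed $\star_{\{T\}} \le \star$, so $MT \cap D$, which is $M$ since $T_M$ localizes $D$ at $M$, stays proper and $\star$-closed), hence $M$ lies in $\QMax^\star(D)$ up to enlarging. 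Therefore every localization of $T$ at a maximal ideal is a valuation domain, which means $T$ is a Pr\"ufer domain; equivalently $\co_T(fg) = \co_T(f)\co_T(g)$ for all $f,g$ by the Gauss-Gilmer-Tsang Theorem. The main obstacle is this last paragraph: pinning down precisely why the maximal ideals of a flat overring $T$ (contracted to $D$) land inside $\QMax^\star(D)$ when $\star = \wedge_{\boldsymbol{\mathcal{T}}}$ — one must use that for a flat overring $T$, the primes of $T$ correspond to a set of primes of $D$ over which $T$ localizes, and that $\star$-closure is governed by the $D_M$ for $M$ $\star$-maximal; I would make this rigorous by the observation $\star_{\{T_\lambda\}} \le \wedge_{\boldsymbol{\mathcal{T}}} = \star$, so any $\star$-invertible finitely generated ideal stays $\star_{\{T_\lambda\}}$-invertible, i.e., invertible in $T_\lambda$, forcing $T_\lambda$ Pr\"ufer directly without even passing through spectra.
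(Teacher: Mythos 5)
Your setup coincides with the paper's: you check that $\star=\wedge_{\boldsymbol{\mathcal{T}}}$ is stable (each $T$ is flat, and a meet of stable semistar operations is stable) and of finite type, invoke Theorem \ref{Theorem A'}, and use $\boldsymbol{c}_D(h)^\star T=\boldsymbol{c}_D(h)T=\boldsymbol{c}_T(h)$ together with the intersection formula; the implication ``each $T$ Pr\"ufer $\Rightarrow$ P$\star$MD'' is then exactly the paper's termwise argument. You diverge on the converse, and there you overlook the one-line route that the identity you already wrote down provides: multiplying Theorem \ref{Theorem A'}(i) by $T$ gives $\boldsymbol{c}_T(fg)=\boldsymbol{c}_D(fg)^\star T=(\boldsymbol{c}_D(f)\boldsymbol{c}_D(g))^\star T=\boldsymbol{c}_T(f)\boldsymbol{c}_T(g)$, and the Gauss--Gilmer--Tsang Theorem applied to $T$ finishes it; this is what the paper does. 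Of your two substitutes, route (a) through $\QMax^{\star}(D)$ can be completed (for a flat overring one has $T_N=D_{N\cap D}$ for each maximal ideal $N$ of $T$, the contraction $N\cap D$ is a quasi--$\star$--prime since $(N\cap D)^\star\cap D\subseteq (N\cap D)T\cap D=N\cap D$, hence lies under some $M\in\QMax^{\star}(D)$, and $D_{N\cap D}$ is then a localization of the valuation domain $D_M$), but as written -- ``up to enlarging'' -- it is too vague to stand as a proof. Route (b) is sound in substance but your inequality is backwards: since $E^\star=\bigcap_{\mu}ET_{\mu}\subseteq ET_{\lambda}$, one has $\star=\wedge_{\boldsymbol{\mathcal{T}}}\leq\star_{\{T_{\lambda}\}}$, not $\star_{\{T_{\lambda}\}}\leq\star$; it is precisely this corrected inequality that transfers $\star$--invertibility to $\star_{\{T_{\lambda}\}}$--invertibility, i.e., $FF^{-1}T_{\lambda}=T_{\lambda}$ and hence $FT_{\lambda}$ invertible in $T_{\lambda}$, which is the conclusion you want. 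With that sign fixed, route (b) is a legitimate alternative -- it is essentially the implication (iii)$\Rightarrow$(ii) of Proposition \ref{Prop-new1} and does not even need flatness for this direction -- but it is genuinely different from, and less economical than, the paper's content-multiplication argument.
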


\begin{proof}
Note that in this case, $\star $ is stable because each overring
$T\in \boldsymbol{\mathcal{T}}$ is flat, and $\star $ is of finite
type by assumption. Therefore $\star =\widetilde{\star }$.
Moreover, we have $ \boldsymbol{c}_{D}(h)^{\star
}T=\boldsymbol{c}_{D}(h)T=\boldsymbol{c}_{T}(h)$ for all $0\neq
h\in K[X]$. The conclusion then follows immediately from Theorem
\ref{Theorem A'} since $\boldsymbol{c}_{D}(fg)^{\star }=\bigcap \{
\boldsymbol{c}_{T}(fg)\mid T\in \boldsymbol{\mathcal{T}}\}$ and $(
\boldsymbol{c}_{D}(f)\boldsymbol{c}_{D}(g))^{\star }=\bigcap
\{\boldsymbol{c} _{T}(f)\boldsymbol{c}_{T}(g)\mid T\in
\boldsymbol{\mathcal{T}}\}$. \end{proof}

%REMARK 1.4
\begin{remark} \rm
Let $D$ be an integral domain, $\Delta $ a subset of
$\text{Spec}(D)$, and $ \star :=\star _{\Delta }$, the spectral
semistar operation associated to $ \Delta $. If we assume that
$\Delta $ is quasi-compact (as a subspace of $ \text{Spec}(D)$
endowed with the Zariski topology), then the semistar operation
$\star $ is a stable semistar operation of finite type   \cite[Corollary 4.6(2)]{[FH2000]},  
and thus we can apply Corollary
\ref{Corollary A2} to this case.
\end{remark}

The next corollary is a particularly significant case of Corollary
\ref{Corollary A2}.

%COROLLARY 1.5
\begin{corollary}
\label{Corollary A3} Let $D$ be an integral domain with quotient
field $K$, let $X$ be an indeterminate over $K$, and let $\star $
be the (semi)star operation of finite type induced by a defining
family $\boldsymbol{ \mathcal{T}}$ of $D$ consisting of quotient
rings of $D$, i.e., $ \boldsymbol{\mathcal{T}}:=\{T_{\lambda }\mid
\lambda \in \Lambda \}$ with $ D=\bigcap \{T_{\lambda }\mid
\lambda \in \Lambda \}$ and each $T_{\lambda }$ is a ring of
fractions of $D$. Then the following are equivalent:

\begin{enumerate}
\item[(i)] $\boldsymbol{c}_{D}(fg)^{\star }=(\boldsymbol{c}_{D}(f)
\boldsymbol{c}_{D}(g))^{\star }$ for all $0\neq f,g\in K[X]$.

\item[(ii)] Each $T_{\lambda }\in \boldsymbol{\mathcal{T}}$ is a Pr\"{u}fer
domain.

\item[(iii)] $D$ is a P$\star $MD.
\end{enumerate}
\end{corollary}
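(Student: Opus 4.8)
The plan is to deduce Corollary \ref{Corollary A3} as a special case of Corollary \ref{Corollary A2}, together with a small additional argument ensuring that in this situation the relevant star operation is a genuine (semi)star operation and that condition (ii) of Corollary \ref{Corollary A2} simplifies to the statement that each $T_\lambda$ is Pr\"ufer. First I would observe that each $T_\lambda$, being a ring of fractions (a quotient ring) of $D$, is a flat overring of $D$; hence $\boldsymbol{\mathcal{T}}$ is a family of flat overrings, and $\star = \wedge_{\boldsymbol{\mathcal{T}}}$ is a stable semistar operation of finite type, exactly as in the hypotheses of Corollary \ref{Corollary A2}. Moreover, since $D = \bigcap\{T_\lambda \mid \lambda \in \Lambda\}$ (i.e.\ $\boldsymbol{\mathcal{T}}$ is a \emph{defining} family of $D$), we have $D^\star = \bigcap\{DT_\lambda \mid \lambda \in \Lambda\} = \bigcap\{T_\lambda \mid \lambda \in \Lambda\} = D$, so $\star$ is actually a (semi)star operation, which is why the statement is phrased with ``the (semi)star operation of finite type induced by a defining family''.

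Next I would invoke Corollary \ref{Corollary A2} directly: under these hypotheses, $D$ is a P$\star$MD if and only if $\boldsymbol{c}_T(fg) = \boldsymbol{c}_T(f)\boldsymbol{c}_T(g)$ for all $0 \ne f,g \in K[X]$ and all $T \in \boldsymbol{\mathcal{T}}$, which by the Gauss-Gilmer-Tsang Theorem is equivalent to each $T_\lambda$ being a Pr\"ufer domain. This gives the equivalence (ii)$\Leftrightarrow$(iii) immediately. It then remains to fold in (i): but (i) here is literally the hypothesis ``$\boldsymbol{c}_D(fg)^\star = (\boldsymbol{c}_D(f)\boldsymbol{c}_D(g))^\star$ for all $0 \ne f,g \in K[X]$'', and since $\star = \widetilde{\star}$ (as $\star$ is stable of finite type), this is precisely condition (i) of Theorem \ref{Theorem A'} applied to $\star$; alternatively it is condition (i) of Corollary \ref{A1}. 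Either way, (i)$\Leftrightarrow$(iii) follows from Theorem \ref{Theorem A'} (with the content-localization computation $\boldsymbol{c}_D(h)^\star T = \boldsymbol{c}_D(h)T = \boldsymbol{c}_T(h)$ identifying things as in the proof of Corollary \ref{Corollary A2}). So the whole corollary is obtained by specializing the two previously-proved results and noting that a quotient ring is flat.

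The only genuine content beyond citation is checking that quotient rings (rings of fractions) of $D$ are $D$-flat, which is standard: a localization $S^{-1}D$ is always flat as a $D$-module. With that, everything lines up with the hypotheses of Corollary \ref{Corollary A2}. I expect no real obstacle here; the main thing to be careful about is bookkeeping between the semistar and (semi)star viewpoints — specifically making explicit that $D^\star = D$ because $\boldsymbol{\mathcal{T}}$ is a defining family, so that the word ``(semi)star'' in the statement is justified — and making sure the equivalence (i)$\Leftrightarrow$(iii) is attributed to Theorem \ref{Theorem A'} rather than re-deriving it. In short, the proof will read: each $T_\lambda$ is flat over $D$, so $\star = \wedge_{\boldsymbol{\mathcal{T}}}$ is stable of finite type and (being induced by a defining family) a (semi)star operation; now apply Theorem \ref{Theorem A'} for (i)$\Leftrightarrow$(iii) and Corollary \ref{Corollary A2} together with the Gauss-Gilmer-Tsang Theorem for (ii)$\Leftrightarrow$(iii).
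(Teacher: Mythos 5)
Your proposal is correct and follows exactly the route the paper intends: the paper gives no separate proof of Corollary \ref{Corollary A3}, presenting it as ``a particularly significant case of Corollary \ref{Corollary A2}'' (quotient rings are flat overrings, so $\star=\wedge_{\boldsymbol{\mathcal{T}}}$ is stable, and being of finite type by hypothesis it equals $\widetilde{\star}$, after which Theorem \ref{Theorem A'} and the identification $\boldsymbol{c}_D(h)^\star T = \boldsymbol{c}_T(h)$ give all three equivalences). Your additional remark that $D^\star=D$ because $\boldsymbol{\mathcal{T}}$ is a defining family, justifying the ``(semi)star'' terminology, is a correct and worthwhile piece of bookkeeping that the paper leaves implicit.
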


\smallskip

Since the $w$--operation is the (semi)star operation on $D$ induced by the
quotient rings $\boldsymbol{\mathcal{T}}:=\{D_{Q}\mid Q\in \text{Max}
^{t}(D)\}$, i.e., $w=\wedge _{\boldsymbol{\mathcal{T}}}$, and since $w$ is
of finite type, we have the following application of the previous corollary.

%COROLLARY 1.6
\begin{corollary}
\label{Corollary B'} An integral domain $D$ is a P$v$MD if and only if $
\boldsymbol{c}_{D}(fg)^{w}=(\boldsymbol{c}_{D}(f)\boldsymbol{c}_{D}(g))^{w}$
for all $0\neq f,g\in K[X]$.
\end{corollary}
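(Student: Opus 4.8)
The plan is to derive Corollary \ref{Corollary B'} as a direct instance of Corollary \ref{Corollary A3}. The key observation, already recorded in the text immediately preceding the statement, is that the $w$--operation on $D$ is the stable (semi)star operation of finite type induced by the family $\boldsymbol{\mathcal{T}}:=\{D_Q \mid Q\in \Max^{t}(D)\}$ of quotient rings of $D$; that is, $w=\wedge_{\boldsymbol{\mathcal{T}}}$, and this $\boldsymbol{\mathcal{T}}$ is a defining family since $D=\bigcap\{D_Q\mid Q\in\Max^{t}(D)\}$ (each $t$--ideal being contained in a $t$--maximal ideal). Thus $\boldsymbol{\mathcal{T}}$ satisfies exactly the hypotheses of Corollary \ref{Corollary A3}, with $\star=w$.

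Applying Corollary \ref{Corollary A3} to this $\boldsymbol{\mathcal{T}}$, the equivalence (i)$\Leftrightarrow$(iii) there reads: $\boldsymbol{c}_D(fg)^{w}=(\boldsymbol{c}_D(f)\boldsymbol{c}_D(g))^{w}$ for all $0\neq f,g\in K[X]$ if and only if $D$ is a P$w$MD. So it only remains to identify P$w$MD's with P$v$MD's. But this is immediate from the general remark made in the introduction: since $w=\widetilde{v}=\widetilde{t}$, the notions of P$\star$MD, P$\star_{\!_f}$MD, and P$\widetilde{\star}$MD all coincide; taking $\star=v$ gives P$v$MD $=$ P$t$MD $=$ P$w$MD. (Alternatively, one notes $\QMax^{w}(D)=\QMax^{t}(D)=\Max^{t}(D)$, so a finitely generated ideal is $w$--invertible precisely when it is $t$--invertible.) Stringing the two equivalences together yields the statement.

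Since the whole argument is a two-line specialization, there is really no substantive obstacle; the only point requiring a word of care is the verification that $\{D_Q\mid Q\in\Max^{t}(D)\}$ genuinely is a \emph{defining} family of quotient rings realizing $w$ — i.e., that $w=\wedge_{\boldsymbol{\mathcal{T}}}$ holds on all of $\boldsymbol{\overline{F}}(D)$, not merely on $\boldsymbol{f}(D)$ — and this is exactly the content of the displayed remark preceding Corollary \ref{Corollary B'} (it is the definition $E^{\widetilde{v}}=\bigcap\{ED_M\mid M\in\QMax^{t}(D)\}$ together with $\QMax^{t}(D)=\Max^{t}(D)$). With that in hand the proof is essentially a citation of Corollary \ref{Corollary A3} plus the elementary identification of P$v$MD with P$w$MD.

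\begin{proof}
As noted just before the statement, $w=\wedge_{\boldsymbol{\mathcal{T}}}$ where $\boldsymbol{\mathcal{T}}:=\{D_Q \mid Q\in \Max^{t}(D)\}$ is a defining family of $D$ consisting of quotient rings of $D$, and $w$ is of finite type. Hence Corollary \ref{Corollary A3} applies with $\star=w$, giving that $\boldsymbol{c}_D(fg)^{w}=(\boldsymbol{c}_D(f)\boldsymbol{c}_D(g))^{w}$ for all $0\neq f,g\in K[X]$ if and only if $D$ is a P$w$MD. Finally, since $w=\widetilde{v}$, the notions of P$v$MD and P$w$MD coincide by the observation that P$\star$MD and P$\widetilde{\star}$MD are the same for every semistar operation $\star$. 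This proves the claim.
\end{proof}
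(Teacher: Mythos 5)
Your proposal is correct and is essentially the paper's own argument: the paper likewise invokes Corollary \ref{Corollary A3} with $\boldsymbol{\mathcal{T}}=\{D_Q\mid Q\in\text{\rm Max}^{t}(D)\}$ (noting $w=\wedge_{\boldsymbol{\mathcal{T}}}$ is of finite type) and then identifies P$w$MD with P$v$MD via P$\widetilde{\star}$MD $=$ P$\star$MD. Your extra care about $\boldsymbol{\mathcal{T}}$ being a genuine defining family realizing $w$ on all of $\boldsymbol{\overline{F}}(D)$ is a reasonable, if minor, addition.
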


Proof. Apply Corollary \ref{Corollary A3} and recall that, as a consequence
of the fact that P$\widetilde{\star }$MD = P$\star $MD, we have P$w$MD = P$v$MD. \hfill $\Box $

\bigskip

This corollary on the one hand gives a nice general
characterization of P$v$MD's, and on the other hand it establishes
the \textquotedblleft superiority\textquotedblright\ of the
$w$--operation over the $t$--operation. As a matter of fact, since $F^{t}=F^{v}$ for each finitely generated nonzero ideal $F$,  by
\cite[Lemme 1]{[Q]}, we have:
\begin{equation*}
\begin{array}{rl}
D\mbox{ is integrally closed }\Leftrightarrow & \boldsymbol{c}_{D}(fg)^{v}=(
\boldsymbol{c}_{D}(f)\boldsymbol{c}_{D}(g))^{v}
\mbox { for all $0 \ne
f,g\in K[X]$} \\
\Leftrightarrow & \boldsymbol{c}_{D}(fg)^{t}=(\boldsymbol{c}_{D}(f)
\boldsymbol{c}_{D}(g))^{t}\mbox { for all $0 \ne f,g\in K[X]$}.
\end{array}
\end{equation*}
In other words, for a Gaussian-like characterization of P$v$MD's, $w$ can do what $t$ cannot do.
\medskip

 As  noted in the introduction, P$\ast $MD's were
introduced by Houston, Malik, and Mott in \cite{[HMM]} for a
finite type star operation $\ast $. Note that for any star
operation $\ast $, a $\ast $-invertible $\ast $-ideal is a
$v$-ideal (cf. \cite[Corollaire 1, page 21]{[J]},
\cite[Proposition 3.1]{[An]}). Now since in a P$\ast $MD every
star ideal of finite type is $\ast _{\!_{f}}$-invertible, and so
is a $v$-ideal of finite type, we conclude that in a P$\ast $MD,
where $\ast$ is a finite type star operation,  every $\ast $-ideal
is in fact a $t$-ideal.

\medskip

It follows immediately by definition that for two semistar
operations $\star_1$ and $ \star_2$ on $D$, if $\star_1 \leq
\star_2$ and if $D$ is a P$\star_1$MD, then $D$ is also  a
P$\star_2$MD. In particular, for each (semi)star operation $\star$
on $D$, we have that $D$ is a P$\star$MD implies that $D$ is also
a P$v$MD since  $\star \leq v$.  Given a P$v$MD $D$, one wonders
if there is a non-trivial (semi)star operation $\star$ of finite
type on $D$ such that $D$ is not a P$\star$MD. Fontana, Jara, and
Santos provided such an example in \cite[Example 3.4]{[FJS]}. The
next corollary shows the way to construct more examples.

%COROLLARY 1.7
\begin{corollary}
\label{Corollary C} Let $D$ be a P$v$MD, let $n \geq 1$, and let $
\boldsymbol{\mathcal{T}} := \{D_{S_{i}}\mid 1 \leq i \leq n\}$ be
a  finite   family of quotient rings of $D$ such that $D= \bigcap_{i=1}^{n}
D_{S_{i}}$. If some $D_{S_{i}}$ is  not a Pr\"ufer domain, then
$D$ is a P$v$MD with a stable (semi)star operation $\star$ of
finite type (e.g., $\star := \wedge_{ \boldsymbol{\mathcal{T}}}$)
such that $D$ is not a P$\star$MD.
\end{corollary}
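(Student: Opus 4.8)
The plan is to apply Corollary \ref{Corollary A3} directly to the given defining family $\boldsymbol{\mathcal{T}} := \{D_{S_i} \mid 1 \leq i \leq n\}$. Since $\boldsymbol{\mathcal{T}}$ is a finite family of quotient rings of $D$ with $D = \bigcap_{i=1}^n D_{S_i}$, it is a defining family of $D$ consisting of quotient rings, so the hypotheses of Corollary \ref{Corollary A3} are met with $\star := \wedge_{\boldsymbol{\mathcal{T}}}$. (Being a finite intersection of flat overrings, $\star$ is automatically a stable semistar operation of finite type, and it is a (semi)star operation since $D^\star = \bigcap_i D_{S_i} = D$.)

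First I would observe that $\star$ is a \emph{non-trivial} (semi)star operation: since some $D_{S_i}$ is not a Pr\"ufer domain, in particular $D_{S_i} \neq K$ for that index, so $D \subsetneq D_{S_i}$ and hence $\star \neq v$ need not even be argued — what matters is only that $\star$ is of finite type and that $D$ fails to be a P$\star$MD. For the latter, I would invoke the equivalence (iii)$\Leftrightarrow$(ii) of Corollary \ref{Corollary A3}: $D$ is a P$\star$MD if and only if each $T_\lambda = D_{S_i}$ is a Pr\"ufer domain. By hypothesis at least one $D_{S_i}$ is not Pr\"ufer, so $D$ is not a P$\star$MD. On the other hand, $D$ is a P$v$MD by assumption. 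This already gives the conclusion.

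Since the proof is essentially a one-line deduction from Corollary \ref{Corollary A3}, there is no serious obstacle; the only point requiring a word of care is checking that the stated $\star$ genuinely satisfies the running hypotheses of Corollary \ref{Corollary A3} — namely that $\boldsymbol{\mathcal{T}}$ is a defining family (which is exactly the assumption $D = \bigcap_{i=1}^n D_{S_i}$) and that the members are quotient rings (given). One might also add the routine remark that each $D_{S_i}$, being a localization, is $D$-flat, so Corollary \ref{Corollary A2} applies as well, confirming stability of $\star$. Thus the proof reads:

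\begin{proof}
Since $\boldsymbol{\mathcal{T}} = \{D_{S_i} \mid 1 \leq i \leq n\}$ is a family of quotient rings of $D$ with $D = \bigcap_{i=1}^n D_{S_i}$, it is a defining family of $D$, and $\star := \wedge_{\boldsymbol{\mathcal{T}}}$ is a (semi)star operation of finite type on $D$; moreover $\star$ is stable, being a finite intersection of the stable semistar operations $\star_{\{D_{S_i}\}}$ (each $D_{S_i}$ is a flat overring of $D$). By hypothesis, $D$ is a P$v$MD. On the other hand, by Corollary \ref{Corollary A3}, $D$ is a P$\star$MD if and only if each $D_{S_i}$ is a Pr\"ufer domain; since some $D_{S_i}$ is not a Pr\"ufer domain, $D$ is not a P$\star$MD.
\end{proof}
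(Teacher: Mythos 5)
Your proof is correct and follows essentially the same route as the paper: both reduce the claim to Corollary \ref{Corollary A3} ((iii)$\Rightarrow$(ii)), after noting that $\star=\wedge_{\boldsymbol{\mathcal{T}}}$ is a stable (semi)star operation of finite type because the family $\boldsymbol{\mathcal{T}}$ is finite (hence of finite character) and consists of flat quotient overrings. (The aside that $D_{S_i}\neq K$ would force $D\subsetneq D_{S_i}$ is not right---a quotient ring can equal $D$---but you correctly flag that it plays no role, and your final proof block does not use it.)
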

\begin{proof} Let $\star :=\wedge _{\boldsymbol{\mathcal{T}}}$ be the stable
(semi)star operation induced by the family of overrings
$\boldsymbol{ \mathcal{T}}$, and suppose that $D_{S_{1}}$ is not a
Pr\"{u}fer domain. Then since $ \boldsymbol{\mathcal{T}}$ is
finite, and hence of finite character,  $\star $ is a (semi)star operation of
finite type 
\cite[Theorem 2 (4)]{[A]}.   By Corollary \ref{Corollary A3} ((iii)$\Rightarrow
$(ii)), $D$ is not a P$\star $MD. \end{proof}

By applying Corollary \ref{Corollary C}, the next corollary
provides further examples of P$v$MD's which are not P$\star $MD's
for some stable (semi)star operation $\star $ of finite type. For
the following statement, we fix a notation: given a $0\neq x\in
D$, we let $D_{x}$ be the quotient ring $ D_{S}$, where
$S:=\{x^{k}\mid k\geq 0\}.$

%COROLLARY 1.8
\begin{corollary}
\label{Corollary D} Let $D$ be a P$v$MD. Then the following hold:

\begin{enumerate}
\item[(a)] Suppose that $D$ has nonzero nonunits
$x_{1},x_{2},\ldots ,x_{n}$ with $(x_{1},x_{2},\ldots
,x_{n})^{v}=D$, $n\geq 2$, and $D_{x_{i}}$ is not a Pr\"{u}fer
domain for some $i$. Then there is a stable (semi)star operation
$\star $ of finite type on  $D$ such that $D$ is not a P$\star
$MD.

\item[(b)] Suppose that $M$ is a maximal ideal of $D$ with $D_{M}$ not a valuation
domain. If there is a  nonunit $x\in D\backslash M$, then $D$ has a
stable (semi)star operation $\star $ of finite type such that $D$ is not a P$
\star $MD.
\end{enumerate}
\end{corollary}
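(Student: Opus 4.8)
The strategy in both parts is to produce a finite family of quotient rings of $D$ that intersects to $D$ and contains a non-Pr\"ufer member, and then to invoke Corollary \ref{Corollary C}. So the whole proof reduces to exhibiting the right family $\boldsymbol{\mathcal{T}}$ and checking two things about it: that the members are quotient rings of $D$ (clear by construction), and that their intersection is exactly $D$.

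For part (a): the hypothesis $(x_{1},\ldots,x_{n})^{v}=D$ with the $x_i$ nonzero nonunits is exactly what is needed so that the family $\boldsymbol{\mathcal{T}}:=\{D_{x_{1}},\ldots,D_{x_{n}}\}$ has $\bigcap_{i=1}^{n} D_{x_{i}}=D$. The key point is the standard fact that for a P$v$MD (indeed for any integrally closed domain) one has $D=\bigcap\{D_{x}\mid 0\ne x\in D\}$ refined by the observation that if an ideal $I=(x_1,\dots,x_n)$ satisfies $I^{v}=D$, then $\bigcap_{i}D_{x_i}=D$: an element $z\in K$ lying in every $D_{x_i}$ can be written with denominators powers of the $x_i$, so $z\cdot(x_1^k,\dots,x_n^k)\subseteq D$ for large $k$, and since $(x_1^k,\dots,x_n^k)^{v}=D$ as well (powers of a $v$-ideal of $v$-image $D$), we get $z\in (D:(D:D))=D$. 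Since $D_{x_i}$ is not a Pr\"ufer domain for some $i$ and $n\ge 2$ guarantees the family is ``nontrivial'' (so that $\star:=\wedge_{\boldsymbol{\mathcal{T}}}$ is not the $d$-operation on the relevant part), Corollary \ref{Corollary C} applies directly and yields the stable (semi)star operation $\star$ of finite type for which $D$ is not a P$\star$MD.

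For part (b): here one needs a two-element family. Take $\boldsymbol{\mathcal{T}}:=\{D_M, D_x\}$ where $x\in D\setminus M$ is the given nonunit. These are quotient rings of $D$, and the content of the argument is that $D_M\cap D_x=D$. For this, note that $x$ is a unit in $D_M$ (since $x\notin M$) while every element of $D\setminus M$ that is a nonunit of $D$... more to the point: if $z\in D_M\cap D_x$, then $z\in D_x$ gives $x^k z\in D$ for some $k$, and $z\in D_M$ means $z=a/s$ with $s\notin M$; combining, one shows the ideal $J=\{d\in D\mid dz\in D\}$ is not contained in $M$ (it contains $x^k$) and is not contained in any maximal ideal avoiding $x$ either — one must be slightly careful and may instead argue: $D\subseteq D_M\cap D_x\subseteq D$ by checking at each prime, i.e.\ for a prime $P$ of $D$, either $x\notin P$ so $(D_M\cap D_x)_P\subseteq (D_x)_P=D_P$, or $x\in P$ hence $P\subseteq M$ (need $M$ the unique maximal over $P$?) — this is the delicate point. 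The cleaner route: $D_M\cap D_x = D$ because $\mathrm{Max}(D_M\cap D_x)$-localizations recover $D_M$ and the localizations of $D_x$ at primes not containing $x$, and together these cover $\mathrm{Spec}(D)$ since every prime not containing $x$ localizes from $D_x$ and $M$ itself (the only maximal ideal possibly not of that form, as $x\in M$ is impossible) is covered by $D_M$; so $D_M\cap D_x\subseteq \bigcap_{P}D_P=D$. Since $D_M$ is not a valuation domain, $\boldsymbol{\mathcal{T}}$ has a non-Pr\"ufer member, and Corollary \ref{Corollary C} (with $n=2$) finishes the argument.

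**The main obstacle.** The genuinely delicate step is verifying $D_M\cap D_x=D$ in part (b) (and the analogous but easier $\bigcap_i D_{x_i}=D$ in part (a)), i.e.\ checking that the chosen family is a \emph{defining family} of quotient rings. In (a) this is where the hypothesis $(x_1,\dots,x_n)^{v}=D$ is used in an essential way, via the fact that powers of such an ideal still have $v$-image $D$; in (b) one needs that $x\notin M$ forces every prime containing $x$ to be comparable to $M$ in the right sense — here one should simply argue primewise that $D=\bigcap_{P\in\mathrm{Spec}(D)}D_P$ and that each $D_P$ contains $D_M\cap D_x$ (splitting on whether $x\in P$ or not, the case $x\in P$ being handled because then $P$ cannot be $M$... actually $P\subseteq$ some maximal, and if that maximal avoids $x$ we use $D_x$; if it contains $x$ — impossible for $M$, but there could be others — one has to be a touch more careful, so the safe formulation is $D_M\cap D_x\subseteq \bigcap\{D_P: x\notin P\}\cap D_M$ and then observe $\bigcap\{D_P:x\notin P\}=D_x$ together with the fact that the primes containing $x$ all lie under maximals, each of which either avoids $x$, contradiction, or equals a maximal we must also include — which forces us, in the worst case, to add those maximals to the family; but since $D$ is a P$v$MD one can also just cite that $D=D_M\cap(\bigcap\{D_Q: Q\in\mathrm{Max}^t(D), Q\ne M\})$ and absorb). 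All other steps are immediate applications of Corollary \ref{Corollary C}.
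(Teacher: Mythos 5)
Your part (a) is essentially sound and in the same spirit as the paper: the paper simply cites Zafrullah's result that $(x_1,\dots,x_n)^v=D$ if and only if $D=\bigcap_{i}D_{x_i}$, whereas you reprove the direction you need. One step wants a cleaner justification: $(x_1^k,\dots,x_n^k)$ is not a power of $(x_1,\dots,x_n)$, so ``powers of a $v$-ideal'' does not literally apply. The correct reason that $(x_1^k,\dots,x_n^k)^v=D$ is that every monomial of total degree $nk$ in the $x_i$ is divisible by some $x_i^k$, whence $(x_1,\dots,x_n)^{nk}\subseteq (x_1^k,\dots,x_n^k)$, and $\left((x_1,\dots,x_n)^{nk}\right)^v=\left(\left((x_1,\dots,x_n)^v\right)^{nk}\right)^v=D$.

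Part (b) has a genuine gap: the family $\{D_M,D_x\}$ is the wrong one, because $D_M\cap D_x=D$ is false in general. Your own covering argument exposes the problem: primes of $D$ containing $x$ are covered neither by $D_x$ nor by $D_M$ (they cannot equal $M$, but they need not be comparable to $M$ either). Concretely, take $D=k[X,Y]$, $M=(X+1,Y)D$ and $x=X$: then $X$ is a unit in $D_M$, so $Y/X\in D_M\cap D_X$, while $Y/X\notin D$. The patches you sketch do not rescue the argument: enlarging $\boldsymbol{\mathcal{T}}$ by the $D_Q$ for all the missing ($t$--)maximal ideals $Q$ destroys the finiteness needed to invoke Corollary \ref{Corollary C} (so $\wedge_{\boldsymbol{\mathcal{T}}}$ is no longer known to be of finite type), and in the extreme case turns $\wedge_{\boldsymbol{\mathcal{T}}}$ into $w$, for which $D$ \emph{is} a P$w$MD, so no counterexample operation is produced. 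The paper's route avoids $D_M$ entirely: since $x\notin M$ and $M$ is maximal, write $1=rx+y$ with $y\in M$; then $x,y$ are nonzero nonunits with $(x,y)=D$, hence $((x,y)D)^v=D$, so part (a) gives $D=D_x\cap D_y$; and $D_x$ is not a Pr\"ufer domain because $D_M$ is a further localization of $D_x$ (as $x\notin M$) and is not a valuation domain. Corollary \ref{Corollary C} applied to $\{D_x,D_y\}$ then finishes the proof.
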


\begin{proof} (a) The proof hinges on the fact that $(x_{1},x_{2},\ldots
,x_{n})^{v}=D$ if and only if $D=\bigcap_{i=1}^{n}D_{x_{i}}$ \cite[Theorem 6]
{[Z2]}. Now the same procedure as in Corollary \ref{Corollary C} does the
rest of the job.

(b) Note that there is a $y\in M$ such that $(x,y)=D$. So, as in
(a), we have $D=D_{x}\cap D_{y}$, and $D_{x}$ is not a Pr\"{u}fer
domain since $ D_{x}\subseteq D_{M}$. \end{proof}

\medskip  Corollary~\ref{Corollary D}(b) can be applied for
instance to a non-quasilocal Krull domain of
dimension two. In particular, take $D:=K[X,Y]$, where $K$ is a field
and $X,Y$ are two indeterminates over $K$. Clearly $D$ is 
%a Krull domain of
%dimension two  and so, in particular, 
a non-Pr\"{u}fer P$v$MD. Let $M:=(X+1,Y)D$.
Observe that  $X \in D\backslash M$  is a nonunit,  $D_{M}$ is a Noetherian regular local 
domain of dimension two    (and thus it is not a valuation domain),   and that, for
instance, $(X,X+1)D=D$.

On the other hand, there do exist examples of non-Pr\"{u}fer
P$v$MD's $D$ such that for each pair of nonunits    $x,y \in D$   with
$((x,y)D)^{v}=D$,  we have that $D_{x}$ and $D_{y}$ are both
Pr\"{u}fer domains. For instance, take a  
two-dimensional  quasilocal Krull domain, e.g., $D:=K[\![X,Y]\!]$, where
$K$ is a field. (In this case, if $\alpha ,\beta \in D$ are
nonunits such that $ ((\alpha ,\beta )D)^{v}=D$, then $D_{\alpha
}$ and $D_{\beta }$ are Dedekind domains and $D=D_{\alpha }\cap
D_{\beta }$.)

\smallskip
 \smallskip

In the final part of    this  section, we examine the case of semistar
operations of the type $\star =\wedge _{\boldsymbol{\mathcal{T}}}$ without
assuming   finite character.

%PROPOSITION 1.9
\begin{proposition}
\label{Prop-new1} Let $D$ be an integral domain with quotient
field $K$ and let $\star$ be the semistar operation induced by a
family $\boldsymbol{\mathcal{T}} $ of overrings of $D$, i.e.,
$\star =\wedge_{\boldsymbol{\mathcal{T}}}$. Consider the following
statements:

\begin{enumerate}
\item[(i)] \it
$\boldsymbol{c}_D(fg)^{{\star}}=(\boldsymbol{c}_D(f)\boldsymbol{c}_D(g))^{{
\star} }$    \ for all $0 \neq f, g\in K[X]$.

\item[(ii)] \textit{Each overring $T \in \boldsymbol{\mathcal{T}}$
is a Pr\"ufer domain. }

\item[(iii)] \it $ (FF^{-1})^\star =D^\star$  \  for all  $F
\in\boldsymbol{f}(D)$.
\end{enumerate}

\noindent Then {\rm
{(iii)}$\Rightarrow${(ii)}$\Leftrightarrow${(i)}}. \it Moreover,
if we assume that each $T \in \boldsymbol{\mathcal{T}}$ is a
quotient overring of $D$, then {\rm
{(iii)}$\Leftrightarrow${(ii)}$\Leftrightarrow${(i)}}.
\end{proposition}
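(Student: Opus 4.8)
The plan is to reduce all three implications to the Gauss-Gilmer-Tsang Theorem by means of one elementary observation about $\star =\wedge _{\boldsymbol{\mathcal{T}}}$: for every $E\in \boldsymbol{\overline{F}}(D)$ and every fixed $T\in \boldsymbol{\mathcal{T}}$ one has $E^{\star }T=ET$. Indeed $E^{\star }=\bigcap_{S\in \boldsymbol{\mathcal{T}}}ES\subseteq ET$, so $E^{\star }T\subseteq (ET)T=ET$, while $E\subseteq E^{\star }$ gives the reverse inclusion; in particular $D^{\star }T=\bigl(\bigcap_{S\in \boldsymbol{\mathcal{T}}}S\bigr)T=T$. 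Together with the identity $\boldsymbol{c}_{T}(h)=\boldsymbol{c}_{D}(h)T$, valid for every overring $T$, this is essentially all that is needed.

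For (ii)$\Rightarrow$(i): if each $T\in \boldsymbol{\mathcal{T}}$ is Pr\"ufer, then Gauss-Gilmer-Tsang gives $\boldsymbol{c}_{D}(fg)T=\boldsymbol{c}_{T}(fg)=\boldsymbol{c}_{T}(f)\boldsymbol{c}_{T}(g)=\boldsymbol{c}_{D}(f)\boldsymbol{c}_{D}(g)T$ for every $T$, and intersecting over $\boldsymbol{\mathcal{T}}$ yields (i). For (i)$\Rightarrow$(ii): multiply the equality in (i) by a fixed $T\in \boldsymbol{\mathcal{T}}$ and apply the observation to the modules $\boldsymbol{c}_{D}(fg)$ and $\boldsymbol{c}_{D}(f)\boldsymbol{c}_{D}(g)$, obtaining $\boldsymbol{c}_{T}(fg)=\boldsymbol{c}_{D}(fg)^{\star }T=(\boldsymbol{c}_{D}(f)\boldsymbol{c}_{D}(g))^{\star }T=\boldsymbol{c}_{T}(f)\boldsymbol{c}_{T}(g)$ for all $0\neq f,g\in K[X]$; since $T$ has quotient field $K$, the converse half of Gauss-Gilmer-Tsang forces $T$ to be Pr\"ufer.

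For (iii)$\Rightarrow$(ii): every nonzero finitely generated (fractional) ideal of $T$ has the form $FT$ with $F\in \boldsymbol{f}(D)$, its generators already lying in $K$. Applying (iii) and the observation to $FF^{-1}$ gives $FF^{-1}T=(FF^{-1})^{\star }T=D^{\star }T=T$; combining this with the always-valid inclusion $F^{-1}T\subseteq (FT)^{-1}$ yields $T=(FT)(F^{-1}T)\subseteq (FT)(FT)^{-1}\subseteq T$, so $FT$ is invertible in $T$, and hence $T$ is Pr\"ufer. Finally, under the extra hypothesis that each $T=D_{S}$ is a quotient (hence flat) overring, I close the loop with (ii)$\Rightarrow$(iii): flatness gives $F^{-1}T=(FT)^{-1}$ (as already noted in the proof of Theorem~\ref{Theorem A'}), and since $FT$ is a nonzero finitely generated ideal of the Pr\"ufer domain $T$ it is invertible, so $(FF^{-1})T=(FT)(FT)^{-1}=T$; intersecting over $\boldsymbol{\mathcal{T}}$ gives $(FF^{-1})^{\star }=\bigcap_{T\in \boldsymbol{\mathcal{T}}}T=D^{\star }$, which is (iii).

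The only step that is not completely routine is (i)$\Rightarrow$(ii): at first sight the intersection identity in (i) looks too weak to recover the Gauss equality over each individual $T$, because $\boldsymbol{c}_{T}(fg)\subseteq \boldsymbol{c}_{T}(f)\boldsymbol{c}_{T}(g)$ holds unconditionally by Dedekind--Mertens, and an identity of intersections need not pass to each factor. What unlocks it is precisely the observation $E^{\star }T=ET$: it says that $\boldsymbol{c}_{D}(fg)^{\star }$ already ``sees'' every $T$ correctly, so the $\star $--equality of (i) descends verbatim to each $T\in \boldsymbol{\mathcal{T}}$. Everything else is bookkeeping with flat overrings and the two classical content lemmas quoted above.
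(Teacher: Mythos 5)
Your proof is correct, and for the equivalence (i)$\Leftrightarrow$(ii) and for (ii)$\Rightarrow$(iii) it coincides with the paper's argument: both rest on the same key observation that $E^{\star}T=ET$ for all $E\in\boldsymbol{\overline{F}}(D)$ and $T\in\boldsymbol{\mathcal{T}}$ (the paper records only the special case $\boldsymbol{c}_{D}(h)^{\star}T=\boldsymbol{c}_{D}(h)T=\boldsymbol{c}_{T}(h)$, but uses it in exactly the way you do), followed by the two directions of the Gauss--Gilmer--Tsang Theorem applied to each $T$, which has quotient field $K$. Where you genuinely diverge is the implication out of (iii). The paper proves (iii)$\Rightarrow$(i) by repeating the Dedekind--Mertens computation from Theorem~\ref{Theorem A'} ((iii)$\Rightarrow$(i)) --- i.e., $(F^{m}(F^{m})^{-1})^{\star}=D^{\star}$ is inserted into $\boldsymbol{c}_{D}(f)^{m}\boldsymbol{c}_{D}(fg)=\boldsymbol{c}_{D}(f)^{m+1}\boldsymbol{c}_{D}(g)$ --- and then reaches (ii) through the already-established (i)$\Rightarrow$(ii). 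You instead prove (iii)$\Rightarrow$(ii) directly: every nonzero finitely generated fractional ideal of $T$ is of the form $FT$ with $F\in\boldsymbol{f}(D)$, the observation turns $(FF^{-1})^{\star}=D^{\star}$ into $FF^{-1}T=T$, and the unconditional inclusion $F^{-1}T\subseteq(FT)^{-1}$ then forces $(FT)(FT)^{-1}=T$. This is logically complete (together with (i)$\Leftrightarrow$(ii) it yields everything claimed), avoids Dedekind--Mertens entirely, and is arguably the more transparent route, since it exhibits the Pr\"ufer property of each $T$ without passing through the content formula; the paper's route has the mild advantage of reusing verbatim a computation already carried out in Theorem~\ref{Theorem A'}.
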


\begin{proof} Since $\star =\wedge _{\boldsymbol{\mathcal{T}}}$, it is
easy to see that  $\boldsymbol{c}_{D}(h)^{{\star }}T=
\boldsymbol{c}_{D}(h)T=\boldsymbol{c}_{T}(h)$  for all $0 \ne h
\in K[X]$ and all $T\in \boldsymbol{\mathcal{T}}$.

(i)$\Rightarrow $(ii) Apply   the  Gauss-Gilmer-Tsang Theorem to each $
T\in \boldsymbol{\mathcal{T}}$.

(ii)$\Rightarrow $(i)  Since we are assuming that each overring
$T \in \boldsymbol{\mathcal{T}}$ is a Pr\"ufer domain,  we have $
\boldsymbol{c}_{D}(fg)^{\star }=\bigcap
\{\boldsymbol{c}_{T}(fg)\mid T\in
\boldsymbol{\mathcal{T}}\}=\bigcap
\{\boldsymbol{c}_{T}(f)\boldsymbol{c} _{T}(g)\mid T\in
\boldsymbol{\mathcal{T}}\}=(\boldsymbol{c}_{D}(f)
\boldsymbol{c}_{D}(g))^{\star }$  for all\ $0 \ne f,g\in K[X]$.

(iii)$\Rightarrow $(i) The proof is based on the Dedekind-Mertens
Lemma, and it is analogous to the proof of Theorem 1.1
((iii)$\Rightarrow $(i)).

Assume that each $T\in \boldsymbol{\mathcal{T}}$ is a quotient overring of $
D $.

(ii)$\Rightarrow $(iii) Since each $T\in \boldsymbol{\mathcal{T}}$
is a Pr\"{u}fer flat overring of $D$, every nonzero finitely
generated fractional ideal of $T$ is invertible and
$F^{-1}T=(FT)^{-1}$ for  all  
 $F\in \boldsymbol{f}(D)$. Therefore,
\begin{equation*}
\begin{array}{rl}
(FF^{-1})^{\star }= & \bigcap \{(FF^{-1})T\mid T\in \boldsymbol{\mathcal{T}}
\}=\bigcap \{FT(FT)^{-1}\mid T\in \boldsymbol{\mathcal{T}}\}= \\
= & \bigcap \{T\mid T\in \boldsymbol{\mathcal{T}}\}=D^{\star }\,.
\end{array}
\end{equation*}
\vskip -15pt\end{proof}

%COROLLARY 1.10
\begin{corollary}
\label{Cor-new2} Let $D$ be an integral domain with quotient field
$K$.  Set $E^b := \bigcap \{EV \mid V \mbox{ valuation overring of } D \}$ for all $E \in \F(D)$ (i.e., $E^b$ is the completion of the $D$--module $E$ in the sense of Zariski and Samuel \cite[Definition 1, page 347]{ZS}). If $ D $ is integrally closed, then
% there exists a (semi)star
%operation $\star $ on $D$ such that 
  $
\boldsymbol{c}_{D}(fg)^{b}=(\boldsymbol{c}_{D}(f)\boldsymbol{c} _{D}(g))^{b }$  \
for all\ $0 \neq f,g\in K[X]$.
\end{corollary}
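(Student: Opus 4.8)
The plan is to view the $b$--operation as a semistar operation of the special form $\wedge_{\boldsymbol{\mathcal{T}}}$ and then invoke Proposition \ref{Prop-new1}. Let $\boldsymbol{\mathcal{T}}:=\{V\mid V\text{ is a valuation overring of }D\text{ in }K\}$. By the very definition of $E^{b}$ we have $E^{b}=\bigcap\{EV\mid V\in\boldsymbol{\mathcal{T}}\}=E^{\wedge_{\boldsymbol{\mathcal{T}}}}$ for every $E\in\boldsymbol{\overline{F}}(D)$, so $b=\wedge_{\boldsymbol{\mathcal{T}}}$ is exactly a semistar operation induced by a family of overrings, and Proposition \ref{Prop-new1} applies with $\star:=b$. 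The assumption that $D$ be integrally closed enters only to guarantee $D^{b}=\bigcap\{V\mid V\in\boldsymbol{\mathcal{T}}\}=D$, i.e.\ that $b$ is a genuine (semi)star operation and that $\boldsymbol{\mathcal{T}}$ is a defining family of $D$; the content identity itself in fact holds for an arbitrary $D$.

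Next, observe that condition (ii) of Proposition \ref{Prop-new1} is automatic here, since every member of $\boldsymbol{\mathcal{T}}$ is a valuation domain, hence a Pr\"ufer domain. Thus the implication (ii)$\Rightarrow$(i) of that proposition yields, for all $0\neq f,g\in K[X]$,
\begin{equation*}
\boldsymbol{c}_{D}(fg)^{b}=\bigcap\{\boldsymbol{c}_{V}(fg)\mid V\in\boldsymbol{\mathcal{T}}\}=\bigcap\{\boldsymbol{c}_{V}(f)\boldsymbol{c}_{V}(g)\mid V\in\boldsymbol{\mathcal{T}}\}=(\boldsymbol{c}_{D}(f)\boldsymbol{c}_{D}(g))^{b},
\end{equation*}
where the outer equalities use $\boldsymbol{c}_{D}(h)V=\boldsymbol{c}_{V}(h)$ and the middle one is the Gauss-Gilmer-Tsang Theorem applied to each (valuation, hence Pr\"ufer) domain $V$.

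There is essentially no obstacle here; the one point to be respected is that the valuation overrings of $D$ need not be quotient overrings of $D$, so from Proposition \ref{Prop-new1} only the implications (iii)$\Rightarrow$(ii)$\Leftrightarrow$(i) are at our disposal. But (ii)$\Rightarrow$(i) is precisely what is needed, and (ii) holds trivially. Alternatively one can argue in one line, without Proposition \ref{Prop-new1}: for each valuation overring $V$, Gauss' Lemma over a valuation domain gives $\boldsymbol{c}_{D}(fg)V=\boldsymbol{c}_{V}(fg)=\boldsymbol{c}_{V}(f)\boldsymbol{c}_{V}(g)=\boldsymbol{c}_{D}(f)\boldsymbol{c}_{D}(g)V$, and intersecting over all $V\in\boldsymbol{\mathcal{T}}$ gives the claim. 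Finally, it is worth noting that, since $b\leq v$ when $D$ is integrally closed, this corollary contains Krull's Gauss' Theorem for integrally closed domains as the special case obtained by applying $v$ to both sides.
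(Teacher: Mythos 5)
Your proof is correct and follows the paper's own route exactly: identify $b$ with $\wedge_{\boldsymbol{\mathcal{T}}}$ for $\boldsymbol{\mathcal{T}}$ the family of valuation overrings, note that integral closedness makes $b$ a (semi)star operation, and apply Proposition \ref{Prop-new1} ((ii)$\Rightarrow$(i)) since every valuation overring is Pr\"ufer. Your additional remarks (that only (ii)$\Rightarrow$(i) is needed since the $V$'s need not be quotient overrings, and that the identity persists for arbitrary $D$) are accurate but do not change the argument.
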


\begin{proof}   Let $\boldsymbol{\mathcal{T}}$  be
the set of all valuation overrings of $D$. Clearly $b =\wedge
_{ \boldsymbol{\mathcal{T}}}$ and $b$ is a (semi)star operation on the integrally closed domain $D$ by Krull's Theorem \cite[Theorem 19.8]{[G]}. The statement  then  follows from
Proposition \ref{Prop-new1} ((ii)$ \Rightarrow $ (i)). \end{proof}

Recall that an integral domain $D$ is called an  \it essential
domain  \rm if there exists a set of prime ideals $\Delta $ of $D$
such that $D=\bigcap \{D_{P}\mid P\in \Delta \}$ and $D_{P}$ is a
valuation domain  for each $ P\in \Delta $. The set $\Delta $ is
called a \it  set of essential prime ideals for  \rm $D$. Every
P$v$MD is essential, and an essential domain having a set of
essential primes $\Delta $ of finite character (i.e, every nonzero
element of $D$ is a nonunit in only finitely many $D_{P}$, $P\in
\Delta $) is necessarily a P$v$MD \cite[pages 717-718]{Gr}.  In  \cite{Heinzer-Ohm}  
Heinzer and Ohm gave an example of an essential
domain which is not a P$v$MD. For more examples of non-P$v$MD essential domains consult Zafrullah \cite{z1988}.

%COROLLARY 1.11
\begin{corollary}
\label{Cor-new3}  Let $D$ be an integral
domain with quotient field $K$. Assume that $D$ is an essential
domain and let $ \Delta $ be a set of essential prime ideals for
$D$. Then   $\boldsymbol{c}_{D}(fg)^{{\star _{\Delta }}}=(
\boldsymbol{c}_{D}(f)\boldsymbol{c}_{D}(g))^{{\star _{\Delta }}}$
for all\ $ 0\neq f,g\in K[X]$.\  However, if $D$ is neither a
Pr\"{u}fer domain nor a quasilocal domain, then there exists a
stable (semi)star operation  $\star$ of finite type  on $D$,
defined by a family $ \boldsymbol{\mathcal{T}}$ of quotient
overrings of $D$ (i.e., $\star :=\wedge
_{\boldsymbol{\mathcal{T}}}$), such that $
\boldsymbol{c}_{D}(fg)^{{\star}}\neq
(\boldsymbol{c}_{D}(f)\boldsymbol{c} _{D}(g))^{{\star }}$   for
some\  $0\neq f,g\in K[X]$.  In particular, $D$ is not a P$\star
$MD.
\end{corollary}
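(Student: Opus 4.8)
The plan is to split the statement into its two halves. For the first assertion, I would observe that $\star_\Delta = \wedge_{\boldsymbol{\mathcal{L}}(\Delta)}$, where $\boldsymbol{\mathcal{L}}(\Delta)=\{D_P\mid P\in\Delta\}$, and that by the definition of an essential domain each $D_P$ ($P\in\Delta$) is a valuation domain, hence in particular a Pr\"ufer domain. Since each member of this defining family is a quotient overring of $D$, and $D = \bigcap\{D_P\mid P\in\Delta\}$, I can invoke Proposition~\ref{Prop-new1} (the implication (ii)$\Rightarrow$(i)) applied to the family $\boldsymbol{\mathcal{T}}:=\boldsymbol{\mathcal{L}}(\Delta)$ and $\star=\star_\Delta=\wedge_{\boldsymbol{\mathcal{T}}}$. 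This immediately gives $\boldsymbol{c}_D(fg)^{\star_\Delta}=(\boldsymbol{c}_D(f)\boldsymbol{c}_D(g))^{\star_\Delta}$ for all nonzero $f,g\in K[X]$. (Note one does not even need $\Delta$ to have finite character here, since Proposition~\ref{Prop-new1} makes no such hypothesis.)

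For the second assertion, suppose $D$ is essential but neither Pr\"ufer nor quasilocal. Since $D$ is not Pr\"ufer, there is a maximal ideal $N$ such that $D_N$ is not a valuation domain; since $D$ is not quasilocal, there is a maximal ideal $N'\neq N$, and one can pick a nonunit $x\in N'\setminus N$ (for instance any element of $N'\setminus N$; such an element exists because $N\not\supseteq N'$, and it is a nonunit since it lies in $N'$). Then, exactly as in the proof of Corollary~\ref{Corollary D}(b), choose $y\in N$ with $(x,y)D=D$, so that $D=D_x\cap D_y$ by \cite[Theorem 6]{[Z2]} (the two principal quotient rings $D_x$ and $D_y$ genuinely form a defining family for $D$). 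Set $\boldsymbol{\mathcal{T}}:=\{D_x,D_y\}$ and $\star:=\wedge_{\boldsymbol{\mathcal{T}}}$. Because $\boldsymbol{\mathcal{T}}$ is a finite family of quotient overrings, $\star$ is a stable (semi)star operation of finite type by \cite[Theorem 2(4)]{[A]}. Now $D_x\subseteq D_M$ for some maximal ideal $M$ containing $N$ (in fact $D_x\subseteq D_N$), and since $D_N$ is not a valuation domain and $D_x$ is a proper localization landing inside it, $D_x$ is not a Pr\"ufer domain; alternatively, apply Corollary~\ref{Corollary C} directly to the P$v$MD structure is not available (we only assume $D$ essential), so instead I use Corollary~\ref{Corollary A3} (or Proposition~\ref{Prop-new1}, the implication (i)$\Rightarrow$(ii) / (iii)$\Rightarrow$(ii)) applied to $\boldsymbol{\mathcal{T}}=\{D_x,D_y\}$: if the content formula held for $\star$, then every member of $\boldsymbol{\mathcal{T}}$, in particular $D_x$, would be a Pr\"ufer domain, a contradiction. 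Hence $\boldsymbol{c}_D(fg)^{\star}\neq(\boldsymbol{c}_D(f)\boldsymbol{c}_D(g))^{\star}$ for some $f,g$, and consequently $D$ is not a P$\star$MD by Corollary~\ref{Corollary A3}.

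The only subtle point, and the one I would be most careful with, is verifying that $D_x$ is genuinely not a Pr\"ufer domain: this requires that the failure of the Pr\"ufer condition at $N$ survives localization at $x$, i.e. that $D_N$ is a further localization of $D_x$ (which holds since $x\notin N$, so $N$ survives in $D_x$ and $(D_x)_{ND_x}=D_N$), and that a domain having a non-valuation localization cannot be Pr\"ufer (immediate, since localizations of Pr\"ufer domains are valuation domains at maximal ideals). Everything else is a direct citation of Proposition~\ref{Prop-new1}, Corollary~\ref{Corollary A3}, \cite[Theorem 6]{[Z2]}, and \cite[Theorem 2(4)]{[A]}, mirroring the arguments already given for Corollaries~\ref{Corollary C} and~\ref{Corollary D}.
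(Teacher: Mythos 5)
Your proposal is correct and follows essentially the same route as the paper: Proposition \ref{Prop-new1}((ii)$\Rightarrow$(i)) applied to $\boldsymbol{\mathcal{L}}(\Delta)$ for the first assertion, and, for the second, the construction of $\boldsymbol{\mathcal{T}}=\{D_x,D_y\}$ from a pair of comaximal nonunits with one of $D_x,D_y$ contained in $D_N$ for a maximal ideal $N$ such that $D_N$ is not a valuation domain, followed by Proposition \ref{Prop-new1}((i)$\Rightarrow$(ii)). The only immaterial difference is that the paper produces the pair as $x$ and $1+x$ for a suitable $x$ in an arbitrary maximal ideal and then notes that at least one of the two must avoid $N$, whereas you choose $x\notin N$ from the outset using a second maximal ideal.
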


 \begin{proof}  The first statement follows from Proposition
\ref{Prop-new1} ((ii)$\Rightarrow $(i)).  Now assume   that $D$ is
not quasilocal. If $M$ is a maximal ideal of $D$, then we can find
an element $x\in M$ such that $y:=1+x$ is not a unit in $D$.
Therefore, we have found two nonzero nonunits $x,y\in D$ such that
$ (x,y)=D$, and thus $D=D_{x}\cap D_{y}$. If $D$ is not a
Pr\"{u}fer domain, then there exists a maximal ideal $N$ of $D$
such that $D_{N}$ is not a valuation domain. Since at least one of
$x,y$ must avoid $N$ (i.e., $D_{x}\subset D_{N} $ or $D_{y}\subset
D_{N}$), then 
%either 
 $D_{x}$ or $D_{y}$ is not a Pr\"{u}fer
domain. Set $\boldsymbol{\mathcal{T}}:=\{D_{x},D_{y}\}$ and $\star
:=\wedge _{\boldsymbol{\mathcal{T}}}$. Clearly $\star$ is a stable
(semi)star operation of finite type on $D$. The conclusion follows
from Proposition \ref{Prop-new1} ((i)$\Rightarrow $(ii)) and
Theorem 1.1 ((i)$ \Rightarrow $(iii)).
\end{proof}

%%%%%%%%%%%%%%%%
%%%SECTION 2
%%%%%%%%%%%%%%%%%%%%

\section{Class Groups}

A somewhat interesting use of the
results of
Section 1 can be made, yet we need to introduce some terminology.
 While introducing the necessary terminology, we include some general
facts that either link this work with the literature or illuminate some
aspects of the theory of class groups.  This, apparently discursive,
treatment is also included to make a case for studying $\ast$--class groups
for star operations $\ast$ different from $t$. 
 
 Let $\text{%
Inv}^{t}(D)$ be the set of $t$--invertible $t$--ideals of an
integral
domain $D.$ Clearly $\text{Inv}^{t}(D)$ is an abelian group under $t$%
--multiplication and $\text{Inv}^{t}(D)$ contains $\text{Prin}%
(D)$, the set of nonzero principal fractional ideals of $D$, as a
subgroup.  The quotient-group
$\text{Cl}^{t}(D):=\text{Inv}^{t}(D)/\text{Prin}(D)$ is called the
\it    $t$--class group of $D$ \rm  (note that  it was
introduced in \cite{[B]} as \textquotedblleft the class group" of the arbitrary domain $D$%
). The $t$--class group has the interesting property that while it
is defined for any integral domain $D$, it is the divisor class
group of $D$ when
$D$ is a Krull domain and the ideal class group of $D$ when $D$ is a Pr\"{u}%
fer domain.  (Recall that in a Krull (resp., Pr\"ufer) domain $D$,
the nonzero fractional divisorial ideals $\F^v(D)$ (resp.,
nonzero finitely generated fractional ideals $\f(D)$) form an
abelian group under the $v$--operation (resp., $d$-operation,
i.e.,  usual product of ideals); the \it divisor class group \rm
(resp., \it ideal class group\rm) \it of $D$ \rm is the
quotient-group $\F^v(D)/ \Prin(D)$  (resp.,
$\f(D)/\Prin(D)$).  

Moreover, a P$v$MD $D$ is a GCD-domain if and only if $\text{Cl}%
^{t}(D)$ is trivial \cite[Corollary 1.5]{[BZ]}.  There are other results that indicate that $%
\text{Cl}^{t}(D)$ is intimately related with the divisibility properties of $%
D,$ see e.g., \cite{[B]}, \cite{[BZ]}, \cite{[Z1]}, \cite{[AZ]},
and \cite{[An2]}. For these
reasons, apparently, Halter-Koch \cite{[HK]} adapted the notion of the 
$t$--class group for monoids. In \cite{[An]}, D.F.  Anderson
surveyed the topic and introduced a generalization of
$\text{Cl}^{t}(D)$ by noting that if $\ast $ is a star operation
on $D$, then the set $\text{Inv}^{\ast }(D)$
of $\ast $--invertible $\ast $--ideals is an abelian group under $\ast $%
--multiplication and indeed $\text{Prin}(D)$ is a subgroup of $\text{Inv}%
^{\ast }(D)$. The quotient group $\text{Cl}^{\ast
}(D):=\text{Inv}^{\ast }(D)/\text{Prin}(D)$ is called the  \it
$\ast $--class group of $D$. \rm

It is also possible to define a $\star $--class group  for  a semistar operation $\star $ on an integral domain  $D$, but
the generalization is not straightforward.

Let $\star $ be a semistar operation on $D$. We say that $I\in  
\boldsymbol{\overline{F}}(D)$ is  \it quasi--$\star $--invertible  \rm (resp.,  \it $\star$--invertible\rm)  if   $\left( I(D^{\star }:I)\right) ^{\star }=D^{\star }$  
(resp., if $I\in \boldsymbol{{F}}(D)$ and \ $\left( I(D:I)\right) ^{\star
}=D^{\star })$. It is obvious that $\star $--invertible implies quasi--$
\star $--invertible, but the converse does not hold (even if
$\star $ is a stable semistar operation of finite type)
\cite[Example 2.9]{FP}. However, it is clear from the definition
that if $\star $ is a (semi)star
operation and if $I\in \boldsymbol{\overline{F}}(D)$ is quasi--$\star $
--invertible, then $I$ must belong to $\boldsymbol{{F}}(D)$, and so $I$ is $
\star $--invertible. It is not hard to prove that $I$ is quasi--$\star $
--invertible if and only if there exists an $H\in
\boldsymbol{\overline{F}}(D)$ such that $(IH)^{\star }=D^{\star }$
\cite[Lemma 2.10]{FP}.

In the following proposition, we recall some known facts on $\star $ 
--invertibility and quasi--$\star $--invertibility (cf. \cite[Propositions
2.15, 2.16, 2.18 and Corollary 2.17]{FP}).

%PROPOSITION 2.1

\begin{proposition}
\label{qsi} Let $\star $ be a semistar operation on an integral domain $D$.

\begin{enumerate}
\item[(1)] Let $I\in \boldsymbol{\overline{F}}(D)$. Then $I$ is quasi--$%
\star _{\!_{f}}$--invertible if and only if $I$ and $(D^{\star }:I)$ are $%
\star _{\!_{f}}$--finite (hence, $\star $--finite) and $I$ is quasi--$\star $%
--invertible.
\end{enumerate}

For the following statements, we assume $I\in
\boldsymbol{{F}}(D)$.

\begin{enumerate}
\item[(2)] Let $I$ be quasi--$\star $--invertible. Then  $I$ is $\star $%
--invertible if and only if $(D:I)^{\star }=(D^{\star }:I)$\
(i.e., $\left( I^{-1}\right) ^{\star }=\left( I^{\star }\right)
^{-1}$).

\item[(3)] If $\star $ is a (semi)star operation, then $I$ is quasi--$\star $%
--invertible if and only if $I$ is $\star $--invertible.

\item[(4)] If $\star $ is stable and $I\in \boldsymbol{f}(D)$,
then $I$ is quasi--$\star $--invertible if and only if $I$ is
$\star $--invertible.

\item[(5)] $I$ is $\star _{\!_{f}}$--invertible if and only if $I$ is $%
\tilde{\star}$--invertible.
\end{enumerate}
\end{proposition}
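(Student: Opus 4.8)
The plan is to handle the five items as a single package of basic facts about (quasi--)$\star$--invertibility (essentially the content of \cite{FP}), proving the central item (2) first and using it, together with the material recalled in the Introduction, to dispose of (3), (4) and (5) quickly; item (1) is the one needing real care and is done last. The tools used throughout are the elementary semistar identities $(EF)^{\star}=(EF^{\star})^{\star}=(E^{\star}F^{\star})^{\star}$ and $(E^{\star}:F)=(E^{\star}:F)^{\star}$ (valid when $(E:F)\neq(0)$), the equality $F^{\star}=F^{\star_{\!_{f}}}$ for $F\in\f(D)$, the observation that $D\in\f(D)$ forces $D^{\star_{\!_{f}}}=D^{\star}$, and the criterion recalled just before the Proposition: for a finite type semistar operation $\bullet$ and a nonzero integral ideal $J$ one has $J^{\bullet}=D^{\bullet}$ if and only if $J\not\subseteq M$ for every $M\in\QMax^{\bullet}(D)$.

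For (2): the inclusion $(D:I)^{\star}\subseteq(D^{\star}:I^{\star})=(D^{\star}:I)$ is automatic; for the reverse assume $I$ is $\star$--invertible, so $(I(D:I))^{\star}=D^{\star}$, and compute, using that $(D^{\star}:I)$ is a $\star$--module and the identities above,
\[
(D^{\star}:I)=\bigl((D^{\star}:I)D^{\star}\bigr)^{\star}=\bigl((D^{\star}:I)(I(D:I))^{\star}\bigr)^{\star}=\bigl((D^{\star}:I)\,I\,(D:I)\bigr)^{\star}\subseteq\bigl(D^{\star}(D:I)\bigr)^{\star}=(D:I)^{\star},
\]
where the inclusion uses $(D^{\star}:I)\,I\subseteq D^{\star}$; hence equality. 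Conversely, if $(D:I)^{\star}=(D^{\star}:I)$ and $I$ is quasi--$\star$--invertible, then $(I(D:I))^{\star}=(I(D:I)^{\star})^{\star}=(I(D^{\star}:I))^{\star}=D^{\star}$; the parenthetical restatement is just $(I^{\star})^{-1}=(D^{\star}:I^{\star})=(D^{\star}:I)$ versus $(I^{-1})^{\star}=(D:I)^{\star}$. Item (3) is then the Introduction's remark that a (semi)star operation has $D^{\star}=D$, whence $(D^{\star}:I)=(D:I)$ and the two notions literally coincide for $I\in\F(D)$ (and quasi--$\star$--invertibility forces $I^{-1}\neq(0)$, i.e.\ $I\in\F(D)$, anyway). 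For (4): ``$\star$--invertible $\Rightarrow$ quasi--$\star$--invertible'' is formal, and the converse reduces via (2) to verifying $(D:I)^{\star}=(D^{\star}:I)$ when $\star$ is stable and $I=(x_{1},\dots,x_{n})\in\f(D)$; but $(D:I)=\bigcap_{i}x_{i}^{-1}D$, so stability applied to this finite intersection plus axiom $(\star_{1})$ give $(D:I)^{\star}=\bigcap_{i}x_{i}^{-1}D^{\star}=(D^{\star}:I)$. For (5): $II^{-1}$ is a nonzero integral ideal, so by the criterion above $(II^{-1})^{\star_{\!_{f}}}=D^{\star_{\!_{f}}}$ iff $II^{-1}\not\subseteq M$ for every $M\in\QMax^{\star_{\!_{f}}}(D)$, and identically with $\widetilde{\star}$ in place of $\star_{\!_{f}}$; since $\QMax^{\star_{\!_{f}}}(D)=\QMax^{\widetilde{\star}}(D)$ the two conditions agree.

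For (1) I would argue both directions by squeezing a finitely generated product between $G_{0}$ and $I(D^{\star}:I)$. Using $D^{\star_{\!_{f}}}=D^{\star}$, quasi--$\star_{\!_{f}}$--invertibility of $I$ reads $(I(D^{\star}:I))^{\star_{\!_{f}}}=D^{\star}$; since $\star_{\!_{f}}$ is of finite type and $1\in D^{\star}$, there is $G_{0}\in\f(D)$ with $G_{0}\subseteq I(D^{\star}:I)$ and $G_{0}^{\star_{\!_{f}}}=D^{\star}$, and resolving the generators of $G_{0}$ into finite sums $\sum ac$ with $a\in I$, $c\in(D^{\star}:I)$ produces $I_{0}\in\f(D)$, $I_{0}\subseteq I$, and $J_{0}\in\f(D)$, $J_{0}\subseteq(D^{\star}:I)$, with $G_{0}\subseteq I_{0}J_{0}\subseteq I(D^{\star}:I)$, hence $(I_{0}J_{0})^{\star_{\!_{f}}}=D^{\star}$. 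From $IJ_{0}\subseteq D^{\star}$ and the identities above one then gets $I^{\star_{\!_{f}}}=(I(I_{0}J_{0})^{\star_{\!_{f}}})^{\star_{\!_{f}}}=(II_{0}J_{0})^{\star_{\!_{f}}}\subseteq(I_{0}D^{\star})^{\star_{\!_{f}}}=I_{0}^{\star_{\!_{f}}}$, so $I$ is $\star_{\!_{f}}$--finite; a symmetric computation gives $(D^{\star}:I)^{\star_{\!_{f}}}=J_{0}^{\star_{\!_{f}}}$, so $(D^{\star}:I)$ is $\star_{\!_{f}}$--finite; and $D^{\star}=(I_{0}J_{0})^{\star_{\!_{f}}}\subseteq(I_{0}J_{0})^{\star}\subseteq(I(D^{\star}:I))^{\star}\subseteq D^{\star}$ shows $I$ is quasi--$\star$--invertible. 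Conversely, given witnesses $I^{\star_{\!_{f}}}=I_{0}^{\star_{\!_{f}}}$, $(D^{\star}:I)=J_{0}^{\star_{\!_{f}}}$ ($I_{0},J_{0}\in\f(D)$) and $(I(D^{\star}:I))^{\star}=D^{\star}$, one computes $(I_{0}J_{0})^{\star_{\!_{f}}}=(I_{0}J_{0})^{\star}=(I^{\star_{\!_{f}}}(D^{\star}:I))^{\star}=D^{\star}$ (the last step from $I\subseteq I^{\star_{\!_{f}}}$ together with $I^{\star_{\!_{f}}}(D^{\star}:I)\subseteq D^{\star}$), and then $D^{\star}=(I_{0}J_{0})^{\star_{\!_{f}}}\subseteq(I(D^{\star}:I))^{\star_{\!_{f}}}\subseteq D^{\star}$, i.e.\ $I$ is quasi--$\star_{\!_{f}}$--invertible.

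The hard part will be the bookkeeping in (1): pulling the finitely generated witnesses $I_{0}\subseteq I$ and $J_{0}\subseteq(D^{\star}:I)$ out of $G_{0}$, and then moving $\star_{\!_{f}}$ and $\star$ across the products $I_{0}J_{0}$, $II_{0}J_{0}$, $I_{0}D^{\star}$, etc., without losing finite type control — this only runs smoothly after one has isolated the clean facts $D^{\star_{\!_{f}}}=D^{\star}$, $F^{\star}=F^{\star_{\!_{f}}}$ for $F\in\f(D)$, and $(EF)^{\star_{\!_{f}}}=(EF^{\star_{\!_{f}}})^{\star_{\!_{f}}}$. A much smaller point is to fix the convention, implicit in the parenthetical of (2), that $(I^{\star})^{-1}$ is to be read as $(D^{\star}:I^{\star})$ rather than as $(D:I^{\star})$.
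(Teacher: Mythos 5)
Your proof is correct, but note that the paper itself offers no proof of this proposition: it is stated as a recollection of known facts, with a pointer to \cite[Propositions 2.15, 2.16, 2.18 and Corollary 2.17]{FP}. So your self-contained derivation from the basic semistar identities is necessarily a different route from the paper's (which is ``cite and move on''), and it is a sensible one: (2) is proved by the standard computation $(D^{\star}:I)=((D^{\star}:I)\,I\,(D:I))^{\star}\subseteq (D:I)^{\star}$; (3) is immediate from $D^{\star}=D$; (4) correctly exploits stability on the finite intersection $(D:I)=\bigcap_i x_i^{-1}D$; and your argument for (5) is exactly the one the paper itself sketches in the Introduction via $\QMax^{\star_{\!_f}}(D)=\QMax^{\widetilde{\star}}(D)$ and the criterion ``$J^{\bullet}=D^{\bullet}$ iff $J\not\subseteq M$ for all $M\in\QMax^{\bullet}(D)$''. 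The extraction in (1) of $G_{0}\subseteq I_{0}J_{0}\subseteq I(D^{\star}:I)$ with $G_{0}^{\star_{\!_f}}=D^{\star}$ is the right idea and the subsequent manipulations all check out.

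One small point to tighten in the converse of (1): your final step $D^{\star}=(I_{0}J_{0})^{\star_{\!_f}}\subseteq (I(D^{\star}:I))^{\star_{\!_f}}$ needs $I_{0}\subseteq I$ and $J_{0}\subseteq (D^{\star}:I)$, whereas the definition of $\star_{\!_f}$--finiteness only hands you witnesses $I_{0},J_{0}\in\f(D)$ with the right closures, not contained in the modules. This is harmless --- the paper recalls in the Introduction (citing \cite[Lemma 2.3]{FP} and \cite[Theorem 1.1]{[Z1989]}) that the witness of $\star_{\!_f}$--finiteness may always be chosen inside the module --- but you should either invoke that fact explicitly or sidestep it altogether by computing
\begin{equation*}
(I(D^{\star}:I))^{\star_{\!_f}}=\bigl(I^{\star_{\!_f}}(D^{\star}:I)^{\star_{\!_f}}\bigr)^{\star_{\!_f}}=(I_{0}J_{0})^{\star_{\!_f}}=(I_{0}J_{0})^{\star}=\bigl(I^{\star}(D^{\star}:I)\bigr)^{\star}=D^{\star},
\end{equation*}
which uses only the identity $(EF)^{\bullet}=(E^{\bullet}F^{\bullet})^{\bullet}$ and requires no containments.
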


If $\star $ is a semistar operation on an integral domain $D$,
then we can introduce a semistar multiplication   
\textquotedblleft \ $\times^{\!\star} $\ \textquotedblright\  (or, simply,  \textquotedblleft $\times $\textquotedblright\ , if there   is  no danger for ambiguity)  on the set \ $
\text{Inv}^{\star }(D):=\{I\in \boldsymbol{\overline{F}}(D)\mid I $
is $\star$--invertible   and $ I=I^{\star }\}$  \ by $I\times^{\!\star} 
J:=(IJ)^{\star }$. Note that $(\text{Inv}^{\star }(D),\times )$ is
not a group in general, because, for instance, it does not
have  an identity element (e.g., when $D^{\star }\in \boldsymbol{%
\overline{F}}(D)\smallsetminus \boldsymbol{F}(D)$).

On the other hand, $\text{QInv}^{\star }(D):=\{I\in \boldsymbol{\overline{F}}%
(D)\mid I\mbox{ is quasi--$\star$--invertible and }I=I^{\star
}\}$, with the semistar multiplication \textquotedblleft $\times
$\textquotedblright\ introduced above, is always an abelian group
with identity $D^{\star }$ and the unique inverse of $I\in
\text{QInv}^{\star }(D)$ is the $D$--module $(D^{\star
}:I)\in \boldsymbol{\overline{F}}(D)$ (it is not hard to prove that $%
(D^{\star }:I)$ belongs to $\text{QInv}^{\star }(D))$. This fact
also provides one of the motivations, in the semistar operation  setting, for
introducing and studying $\text{QInv}^{\star }(D)$ (and not just
$\text{Inv}^{\star }(D)$, as in the \textquotedblleft
classical\textquotedblright\ star operation case).
Moreover, it is not difficult to prove that $(\text{Inv}^{\star }(D),\times )%
\mbox{ is a group}$ if and only if $(D:D^{\star })\neq (0)$ \cite[page 657]%
{FP}.

In particular, if $\star $ is a (semi)star operation on $D$, then
as we have already observed, the    notions   of quasi--$\star
$--invertible and $\star $--invertible coincide. More precisely,
in this case, we have:  \begin{equation*} \QInv^{\star
}(D)=\Inv^{\star }(D)=\{I\in {\boldsymbol{F}} (D)\mid I\mbox{ is
}\star {\mbox{--invertible and }}I=I^{\star }\}\,.
\end{equation*}

Let  $\ast $ be a star operation on an integral domain $D$. It is
well known
that every $\ast $--invertible $\ast $--ideal is a $v$--invertible 
  $v$--ideal    (see e.g.,   \cite[Theorem 1.1(a)]{[Z3]}).  This property
has a semistar analog. Given a semistar operation $\star $ on $D$,
it is easy to see that the operation defined by
\begin{equation*}
E^{v(\star )}:=(D^{\star }:(D^{\star }:E))\,   \mbox{ \; for all }E\in 
\boldsymbol{\overline{F}}(D)\,
\end{equation*} 
is a semistar operation on $D$ and $\star \leq v(\star )$
\cite[Section 1.2.5]{P}.   Set $t(\star ):=v(\star )_{\!_{f}}$.
It is obvious that when $\star $ is a (semi)star operation, then
$v(\star )$ (resp., $t(\star )$) coincides with the  
\textquotedblleft classical \textquotedblright\   $v$--operation (resp., $t$ 
--operation) on $\boldsymbol{{F}}(D)$.

% PROPOSITION 2.2
\begin{proposition}
\label{qsi-v} \cite[Corollary 2.12]{FP} Let $\star $ be a semistar
operation on an integral domain $D$ and let $I\in
\boldsymbol{\overline{F}}(D)$. If $I$ is quasi--$\star
$--invertible, then $I$ is quasi--$v(\star )$--invertible and
$I^{\star }=I^{v(\star )}$. In particular, ${\QInv}^{\star }(D)$
is a subgroup of ${\QInv}^{v(\star )}(D)$.
\end{proposition}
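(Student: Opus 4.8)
\medskip
\noindent The plan is to reduce everything to the characterization of quasi--invertibility recalled just above the statement: $I$ is quasi--$\star$--invertible if and only if $(IH)^{\star}=D^{\star}$ for some $H\in\boldsymbol{\overline{F}}(D)$, and in that case one may take $H:=(D^{\star}:I)$. Before starting I would record two routine facts. First, $D^{v(\star)}=D^{\star}$: indeed $(D^{\star}:D)=D^{\star}$ since $D^{\star}$ is a $D$--module containing $1$, and then $(D^{\star}:D^{\star})=D^{\star}$ because $1\in D^{\star}$ while $D^{\star}D^{\star}\subseteq(D^{\star}D^{\star})^{\star}=(DD)^{\star}=D^{\star}$. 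In particular $(D^{v(\star)}:I)=(D^{\star}:I)$, so being quasi--$v(\star)$--invertible means exactly that $(I(D^{\star}:I))^{v(\star)}=D^{\star}$. Second, from $\star\le v(\star)$ and the characterization of $\le$ recalled in the Introduction, $(E^{\star})^{v(\star)}=E^{v(\star)}$ for every $E$; with $E=D$ this gives $(D^{\star})^{v(\star)}=D^{\star}$ as well.

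With these in hand, assume $I$ is quasi--$\star$--invertible and set $H:=(D^{\star}:I)$, so that $(IH)^{\star}=D^{\star}$. Since $\star\le v(\star)$ we have $D^{\star}=(IH)^{\star}\subseteq(IH)^{v(\star)}$, and since $IH\subseteq(IH)^{\star}=D^{\star}$ we also have $(IH)^{v(\star)}\subseteq(D^{\star})^{v(\star)}=D^{\star}$; hence $(IH)^{v(\star)}=D^{\star}=D^{v(\star)}$, which is precisely quasi--$v(\star)$--invertibility of $I$. Next I would prove $I^{\star}=I^{v(\star)}$; the inclusion $I^{\star}\subseteq I^{v(\star)}$ is immediate from $\star\le v(\star)$, so the content is the reverse. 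Write $I^{v(\star)}=(D^{\star}:H)$ and take $0\ne z\in(D^{\star}:H)$, so $zH\subseteq D^{\star}$. Then $z(IH)=I(zH)\subseteq ID^{\star}$, and applying $\star$ together with axiom $(\star_{1})$ and the identity $(ID^{\star})^{\star}=(ID)^{\star}=I^{\star}$ yields $zD^{\star}=z(IH)^{\star}=(z(IH))^{\star}\subseteq(ID^{\star})^{\star}=I^{\star}$. Since $z=z\cdot1\in zD\subseteq zD^{\star}$, this gives $z\in I^{\star}$; as $0\in I^{\star}$ trivially, $(D^{\star}:H)\subseteq I^{\star}$, i.e., $I^{v(\star)}\subseteq I^{\star}$. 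This short computation is the engine of the proof.

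For the final assertion, an $I\in\QInv^{\star}(D)$ is quasi--$\star$--invertible with $I=I^{\star}$, hence by the above it is quasi--$v(\star)$--invertible with $I=I^{\star}=I^{v(\star)}$, so $\QInv^{\star}(D)\subseteq\QInv^{v(\star)}(D)$ as sets. Both are abelian groups with common identity $D^{\star}=D^{v(\star)}$, and the inverse of such an $I$ is $(D^{\star}:I)=(D^{v(\star)}:I)$ in either group; the one remaining point is that the two semistar multiplications restrict to the same operation on the smaller set. Here I would be slightly careful: if $I,J$ are quasi--$\star$--invertible with $(IH_{1})^{\star}=D^{\star}=(JH_{2})^{\star}$, then $((IJ)(H_{1}H_{2}))^{\star}=((IH_{1})^{\star}(JH_{2})^{\star})^{\star}=(D^{\star}D^{\star})^{\star}=D^{\star}$, so $IJ$ is again quasi--$\star$--invertible and hence $(IJ)^{\star}=(IJ)^{v(\star)}$ by the part already proved; thus the inclusion $\QInv^{\star}(D)\hookrightarrow\QInv^{v(\star)}(D)$ is a group homomorphism and $\QInv^{\star}(D)$ is a subgroup.

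I do not expect a serious obstacle: there is no deep idea, only bookkeeping. The two points that genuinely need attention are (i) establishing $D^{v(\star)}=D^{\star}$, without which "quasi--$v(\star)$--invertible" would not literally be the statement about $(IH)^{v(\star)}$, and (ii) checking $(IJ)^{\star}=(IJ)^{v(\star)}$ on products so that the two group operations really do agree on $\QInv^{\star}(D)$. Everything else is a direct consequence of $\star\le v(\star)$ together with the basic semistar formulas.
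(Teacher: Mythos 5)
Your proof is correct and complete. Note that the paper itself gives no argument for this proposition -- it simply cites \cite[Corollary 2.12]{FP} -- so there is nothing internal to compare against; your write-up is a self-contained substitute for that reference. The two preliminary observations you isolate, namely $D^{v(\star)}=D^{\star}$ (so that quasi--$v(\star)$--invertibility really is the statement $\bigl(I(D^{\star}:I)\bigr)^{v(\star)}=D^{\star}$) and $(E^{\star})^{v(\star)}=E^{v(\star)}$ from $\star\leq v(\star)$, are exactly the points that make the reduction legitimate, and your computation $zD^{\star}=\bigl(z(IH)\bigr)^{\star}\subseteq (ID^{\star})^{\star}=I^{\star}$ for $z\in (D^{\star}:H)$ is the standard engine behind $I^{v(\star)}\subseteq I^{\star}$. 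The closing check that $IJ$ is again quasi--$\star$--invertible, so that $(IJ)^{\star}=(IJ)^{v(\star)}$ and the two multiplications on $\QInv^{\star}(D)$ agree, is a detail often left implicit but genuinely needed for the subgroup assertion; including it is a plus. No gaps.
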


From the previous proposition (and its proof) and from Proposition
\ref{qsi}(1), we easily deduce that if $I$ is quasi--$\star
_{\!_{f}}$--invertible, then $I$ is quasi--$t(\star )$--invertible
and $I^{\star _{\!_{f}}}=I^{t(\star )}$  (cf. also \cite[Corollary
3.13]{P}).   In particular, $\text{QInv}^{\star _{\!_{f}}}(D)$ is
a subgroup of $\text{QInv}^{t(\star )}(D)$.
\medskip

At this point, it is clear that we can also define class groups in
the
semistar    operation setting.   We define   the \it  $\star $--qclass group of $D$ \rm  to be the abelian group $%
\text{QCl}^{\star }(D):=\text{QInv}^{\star }(D)/\text{Prin}(D)$,
and under
the assumption $(D:D^{\star })\neq (0)$, we define the \it   $\star $--class group of $ 
D$ \rm  to be the abelian group $\text{Cl}^{\star
}(D):=\text{Inv}^{\star }(D)/\text{Prin}(D)$.
Clearly if $\star $ is a (semi)star operation, then $\text{QCl}^{\star }(D)=%
\text{Cl}^{\star }(D)$. When $\star =d$ is the identity (semi)star
operation, then as in the classical case, we define  the \it Picard group of $D$ \rm  to be the abelian group $%
\text{Pic}(D):=\text{QCl}^{d}(D)=\text{Cl}^{d}(D):=\text{Inv}^{d}(D)/\text{%
Prin}(D)$, where $\text{Inv}^{d}(D)$ coincides with the group of the
\textquotedblleft usual\textquotedblright\ fractional invertible ideals of $%
D $.
Note that $\text{Inv}^{d}(D)$ is a subgroup of $\text{QInv}%
^{\star }(D)$  for each semistar operation $\star $ on $D$ (resp.,
of $\text{Inv}^{\star }(D)$  for each semistar operation $\star $
on $D$ such that $(D:D^{\star })\neq (0)$). Therefore, following the
classical
case considered in \cite{[B]}, we call the quotient-group $\text{QG}%
^{\star }(D):=\text{QCl}^{\star }(D)/\text{Pic}(D)$ (resp., $\text{G}^{\star
}(D):=\text{Cl}^{\star }(D)/\text{Pic}(D)$   for each semistar operation $%
\star $ on $D$ such that $(D:D^{\star })\neq (0)$)  the  \it local $%
\star $--qclass group \rm   (resp.,    \it  local $\star $--class
group\rm) \it  of $D$.   \rm  It is
straightforward that when $\star =t$, the group $\text{QG}^{t}(D)=\text{G}%
^{t}(D)$ coincides with the \textquotedblleft classical\textquotedblright\
local class group $\text{G}(D)$ \cite{[B]}.

From Proposition \ref{qsi-v}, we deduce
that for each semistar operation $\star $ on an integral domain $D$, we have $%
\text{QCl}^{\star }(D)\subseteq \text{QCl}^{v(\star )}(D)$, and
under the
assumption $(D:D^{\star })\neq (0)$, we have  $\text{Cl}^{\star }(D)\subseteq \text{Cl}%
^{v(\star )}(D)$. When $\star $ is a semistar operation of finite
type, then the previous inclusions can be replaced by
$\text{QCl}^{\star }(D)\subseteq
\text{QCl}^{t(\star )}(D)$ and  $\text{Cl}^{\star }(D)\subseteq \text{Cl}%
^{t(\star )}(D)$, respectively. Furthermore, if $\star $ is a
(semi)star operation (resp., a (semi)star operation of finite
type), then we have $\text{Pic}(D) \subseteq \text{Cl}^{\star
}(D)\subseteq \text{Cl}^{v}(D)$ (resp., $\text{Pic}(D) \subseteq
\text{Cl}^{\star }(D)\subseteq \text{Cl}^{ t}(D)$).

 \smallskip

For the purposes of the present section, from now on we will only consider
the \textquotedblleft classical\textquotedblright\ case of a star operation $%
\ast $. If $\ast $ is a star operation of finite type on $D$, then $\text{Cl}%
^{\ast }(D)$ provides an interesting generalization of the ($t$--)class
group, but

(a) $t$--invertibility being somewhat abundant and more closely
linked with divi\-si\-bi\-lity \cite{[Z3]}, the $t$--class group seems
to have more applications, especially in view of its similarity to
the divisor class group for Krull domains and the facility of the
$t$--operation with polynomial rings and with rings of fractions.

On the other hand,

(b) there are very few examples of  $\ast $--class groups that are
not $d$--class groups,  $t$--class groups, or $v$--class groups.

Of course we cannot do much about (a), but we
can use Corollary 1.8 to give examples of $\ast $--class groups such that $%
\text{Cl}^{\ast }(D)\subsetneq \text{Cl}^{t}(D)$ and of
$\text{Cl}^{\ast }(D)=\text{Cl}^{t}(D)$, where there is at least
one nonzero finitely generated  ideal $F$ of $D$ such that $F$ is
$t$--invertible, but not $\ast $--invertible.

% PROPOSITION 2.3
\begin{proposition}
\label{Proposition E} Let $D$ be an integral domain in which every $t$--invertible
$t$--ideal is invertible. Then  $\Pic(D) = \Cl^{\ast }(D)= \Cl^{t}(D)$
for any star operation $\ast $ of finite type on $D$.
\end{proposition}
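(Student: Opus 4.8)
The chain $\Pic(D)\subseteq \Cl^{\ast}(D)\subseteq \Cl^{t}(D)$ is already available from the general inclusions recorded just before the statement (for a star operation of finite type, $\Pic(D)\subseteq \Cl^{\ast}(D)\subseteq \Cl^{t}(D)$). So the whole content is to show that the hypothesis ``every $t$--invertible $t$--ideal is invertible'' forces the two outer groups to coincide, whence everything in between collapses. The plan is therefore to prove the single equality $\Pic(D)=\Cl^{t}(D)$ and then invoke the inclusions above to squeeze $\Cl^{\ast}(D)$.

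To prove $\Pic(D)=\Cl^{t}(D)$, I would argue at the level of the groups $\Inv^{d}(D)$ and $\Inv^{t}(D)$. First I recall that, for any star operation $\ast$, a $\ast$--invertible $\ast$--ideal is automatically a $t$--invertible $t$--ideal (this is exactly the fact cited in the text, e.g. \cite[Theorem 1.1(a)]{[Z3]}), and in particular an invertible ideal is a $t$--invertible $t$--ideal; combined with the fact that an invertible ideal $I$ satisfies $I^{v}=I$ (invertible ideals are divisorial, since $I(D:I)=D$ gives $I=I^{vv^{-1}}$-type manipulations, or simply $I=(I^{-1})^{-1}$), this shows $\Inv^{d}(D)\subseteq \Inv^{t}(D)$ as subgroups of the group of fractional ideals, and the inclusion is compatible with $\Prin(D)$. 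Conversely, let $I\in \Inv^{t}(D)$, i.e. $I$ is a $t$--invertible $t$--ideal. By hypothesis $I$ is invertible, so $I\in \Inv^{d}(D)$; moreover the $t$--product and the ordinary product agree on invertible ideals, since for invertible $I,J$ one has $(IJ)^{t}=IJ$ because $IJ$ is again invertible hence divisorial. Hence the inclusion $\Inv^{d}(D)\hookrightarrow \Inv^{t}(D)$ is actually a surjective group homomorphism, i.e. an isomorphism, and it carries $\Prin(D)$ onto $\Prin(D)$ identically. Passing to quotients gives $\Pic(D)=\Inv^{d}(D)/\Prin(D)\;\xrightarrow{\ \sim\ }\;\Inv^{t}(D)/\Prin(D)=\Cl^{t}(D)$.

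Finally, for an arbitrary star operation $\ast$ of finite type on $D$, the general inclusions $\Pic(D)\subseteq \Cl^{\ast}(D)\subseteq \Cl^{t}(D)$ together with the equality just established pin $\Cl^{\ast}(D)$ between two copies of the same group, so $\Pic(D)=\Cl^{\ast}(D)=\Cl^{t}(D)$, as claimed.

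I expect the only genuinely delicate point to be the bookkeeping that the various inclusions $\Inv^{d}(D)\subseteq \Inv^{\ast}(D)\subseteq \Inv^{t}(D)$ are inclusions \emph{of groups} (compatible with the respective multiplications) rather than just of sets: one must check that on invertible ideals the $d$--, $\ast$--, and $t$--products all coincide, which reduces to the observation that a product of invertible ideals is invertible and an invertible ideal is a $\ast$--ideal for every $\ast$ (being divisorial). Once that identification is in place, the rest is a one-line diagram chase and an application of the already-quoted inclusion $\Cl^{\ast}(D)\subseteq \Cl^{t}(D)$.
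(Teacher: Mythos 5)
Your proposal is correct and follows essentially the same route as the paper: sandwich $\Inv^{\ast}(D)$ (equivalently $\Cl^{\ast}(D)$) between $\Inv(D)$ and $\Inv^{t}(D)$ via the general inclusions, then use the hypothesis that every $t$--invertible $t$--ideal is invertible to collapse the chain. Your extra care in checking that the $d$--, $\ast$--, and $t$--products agree on invertible (hence divisorial) ideals is a point the paper leaves implicit, but it is the same argument.
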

\begin{proof}
Indeed,  we have already observed that every $\ast $--invertible
$\ast $--ideal is a $t$--invertible $t$--ideal and every
invertible ideal is  a $\ast $--invertible $\ast $--ideal for any
star operation $\ast $ on $D$.   So   $\Inv(D) \subseteq
\Inv^\ast(D) \subseteq \Inv^t(D)$.   On the other hand, every
$t$--invertible $t$--ideal is invertible by hypothesis. Combining
these inclusions, we conclude that  $\Inv(D)= \Inv^\ast(D)
=\Inv^t(D)$. Hence   $\Pic(D) = \Cl^{\ast }(D)=\Cl^{t}(D)$   in
this case.
\end{proof}

Now are there any integral domains that satisfy the hypothesis of
Proposition \ref{Proposition E}\!~? Indeed, there are plenty. Recall from \cite
{[Z-preS]} that an integral domain $D$ is  \it a pre-Schreier domain \rm   if for $0\neq x,y,z\in
D$,  $x | yz$   implies that $x=rs$ \     for some $r,s \in D$    with   $r | y$     and   $s | z$.    Pre-Schreier domains are a generalization of GCD-domains
 (cf.   \cite{[C]} and \cite[Theorem 1]{Dribin}).   It was indicated in
\cite[Proposition 1.4]{[BZ]}   that if $D$ is a pre-Schreier
domain, then $ \Cl^{t}(D)=0$. Obviously if $D$ is a pre-Schreier
domain and $\ast $ is a star operation of finite type on $D$, then
we must have  $\Cl^{\ast }(D)= \Cl^{t}(D)=0$.   We are aiming at a
somewhat more general result:

% COROLLARY 2.4
\begin{corollary}
\label{Corollary F} Let $D$ be an integral domain.  If \  $\Cl^{t}(D_{M})=0$  for all maximal ide\-als $M$ of $D$ (e.g., if
$D$ is a locally GCD-domain \cite{[BZ]}), then $\Pic(D) =\Cl^{\ast
}(D)= \Cl^{t}(D)$ for any star operation $\ast$ of finite type on
$D$.

 \end{corollary}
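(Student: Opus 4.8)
The plan is to reduce everything to Proposition~\ref{Proposition E}: it is enough to show that, under the present hypothesis, every $t$--invertible $t$--ideal of $D$ is invertible, for then $\Inv(D)=\Inv^{\ast}(D)=\Inv^{t}(D)$ and hence $\Pic(D)=\Cl^{\ast}(D)=\Cl^{t}(D)$ for every finite type star operation $\ast$ on $D$. (Equivalently: since $\Pic(D)\subseteq\Cl^{t}(D)$ always, one wants the local class group $\G(D)=\Cl^{t}(D)/\Pic(D)$ to vanish, and the argument below is essentially the injectivity of the natural map $\G(D)\hookrightarrow\prod_{M}\G(D_{M})=\prod_{M}\Cl^{t}(D_{M})$.)

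So let $I$ be a $t$--invertible $t$--ideal of $D$; the goal is $II^{-1}=D$, which, since $II^{-1}\subseteq D=\bigcap_{M}D_{M}$ over the maximal ideals $M$, amounts to $(II^{-1})D_{M}=D_{M}$ for every maximal ideal $M$. The first move is to rewrite $I$ conveniently. Because $I$ is $t$--invertible, both $I$ and $I^{-1}$ are $t$--finite, so one may pick finitely generated ideals $J\subseteq I$ and $H\subseteq I^{-1}$ with $J^{v}=I$ and $H^{v}=I^{-1}$; using $(D:E^{v})=(D:E)$ and the fact that a $t$--invertible $t$--ideal is divisorial (so $I=I^{v}$), this yields $I=H^{-1}$ and $I^{-1}=J^{-1}$, and one checks that $J$ and $H$ are themselves finitely generated $t$--invertible ideals. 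Now fix $M$. Since forming the inverse of a finitely generated ideal commutes with localization, $ID_{M}=(HD_{M})^{-1}$ and $I^{-1}D_{M}=(JD_{M})^{-1}$; in particular $ID_{M}$, being the inverse of a fractional ideal, is divisorial, hence $t_{D_{M}}$--closed, in $D_{M}$.

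Next, a routine localization argument — using the identity $(AB^{t})^{t}=(AB)^{t}$ and the inclusion $E^{t_{D}}D_{M}\subseteq(ED_{M})^{t_{D_{M}}}$ valid for finitely generated $E$, applied to a product $HL$ with $L\subseteq H^{-1}$ finitely generated and $L^{v}=H^{-1}$ — shows that $HD_{M}$ is $t_{D_{M}}$--invertible in $D_{M}$, and hence so is its inverse $ID_{M}$. Thus $ID_{M}$ is a $t_{D_{M}}$--invertible $t_{D_{M}}$--ideal of $D_{M}$, so by the hypothesis $\Cl^{t}(D_{M})=0$ it is principal, say $ID_{M}=aD_{M}$. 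Finally one computes $I^{-1}D_{M}$: since $J$ is finitely generated and $J^{-1}=(D:J)=(D:J^{v})=(D:I)=I^{-1}=H^{v}$, and since $(D_{M}:H^{v}D_{M})=(D_{M}:HD_{M})$ (because $HD_{M}\subseteq H^{v}D_{M}\subseteq(HD_{M})^{v_{D_{M}}}$), one gets $(JD_{M})^{v_{D_{M}}}=(D_{M}:(D_{M}:JD_{M}))=(D_{M}:J^{-1}D_{M})=(D_{M}:HD_{M})=(HD_{M})^{-1}=ID_{M}=aD_{M}$, whence $I^{-1}D_{M}=(JD_{M})^{-1}=a^{-1}D_{M}=(ID_{M})^{-1}$. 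Therefore $(II^{-1})D_{M}=(ID_{M})(ID_{M})^{-1}=D_{M}$. As $M$ was arbitrary, $II^{-1}=D$, i.e.\ $I$ is invertible, and Proposition~\ref{Proposition E} finishes the argument.

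The step I expect to be the real obstacle is making precise that $ID_{M}$ itself — not merely its $t_{D_{M}}$--closure — is principal: the $t$--operation does not localize, so $\Cl^{t}(D_{M})=0$ only forces $(ID_{M})^{t_{D_{M}}}$ to be principal, and for a badly behaved finitely generated $t$--invertible ideal of $D$ the localization can be strictly smaller than its $t_{D_{M}}$--closure. Representing $I$ as the inverse $H^{-1}$ of a finitely generated ideal is precisely what circumvents this, since it makes $ID_{M}$ automatically divisorial, hence $t_{D_{M}}$--closed, in $D_{M}$. Everything else is bookkeeping with the elementary (semi)star identities recalled in the Introduction and with the standard behaviour of $E\mapsto(D:E)$ under localization for finitely generated $E$.
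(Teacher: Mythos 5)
Your proof is correct, and it reaches the conclusion by a genuinely more self-contained route than the paper. The paper disposes of the corollary in one line: it quotes Bouvier--Zafrullah \cite[Proposition 2.4]{[BZ]} for the fact that the local class group $\G(D)=\Cl^{t}(D)/\Pic(D)$ vanishes whenever $\G(D_{M})=0$ for every maximal ideal $M$, and then uses the sandwich $\Pic(D)\subseteq\Cl^{\ast}(D)\subseteq\Cl^{t}(D)$. What you have done instead is prove from scratch exactly the special case of that citation that is needed, namely that local triviality of $\Cl^{t}$ forces every $t$--invertible $t$--ideal of $D$ to be invertible, which is precisely the hypothesis of Proposition~\ref{Proposition E}. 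Your execution is sound: you correctly isolate the delicate point (that $\Cl^{t}(D_{M})=0$ a priori only makes the $t_{D_{M}}$--closure of $ID_{M}$ principal, since the $t$--operation does not commute with localization) and you circumvent it by writing $I=H^{-1}$ with $H$ finitely generated and $H^{v}=I^{-1}$, so that $ID_{M}=(HD_{M})^{-1}$ is automatically divisorial, hence a $t_{D_{M}}$--ideal, in $D_{M}$; the verification that $HD_{M}$ is $t_{D_{M}}$--invertible via a finitely generated $L\subseteq H^{-1}$ with $(HL)^{v}=D$, and the final computation $I^{-1}D_{M}=(JD_{M})^{-1}=(ID_{M})^{-1}$, both check out. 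The trade-off is only one of economy: the paper's citation hides precisely this localization argument, while your version makes the corollary independent of \cite{[BZ]} at the cost of a page of bookkeeping; both routes terminate in the same inclusion chain $\Pic(D)\subseteq\Cl^{\ast}(D)\subseteq\Cl^{t}(D)$.
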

 \begin{proof} Indeed, the local class group $\G(D)=
\Cl^{t}(D)/\Pic(D) = 0 $   if  $\G(D_M) =0$ (in particular,
if $\Cl^{t}(D_{M})= 0$) for each maximal ideal $M$ of $D$
\cite[Proposition 2.4]{[BZ]}.
\end{proof}

This leaves us with the task of providing an example of an integral domain $%
D $ such that  $\Cl^{\ast }(D)\subsetneq \Cl^{t}(D)$   (also see Example \ref{krull}).    For
this, we recall that an integral domain $D$ is a  \it
generalized GCD \rm (for short, \it G-GCD\rm) \it domain \rm
if for every  nonzero  finitely generated ideal  $F$
of $D$, we have that  $F^{v}$   is invertible \cite{[AA]}.
Moreover, $D$ is a G-GCD domain  if and only if $D$ is a  P$v$MD
which is a locally GCD domain  \cite [Corollary 3.4]{[Z-preS]}. So if we are
looking for a  P$v$MD   example of an integral domain $D$
with $\Cl^{\ast }(D)\subsetneq \Cl^{t}(D)$, then $D$ had better
not be a  G-GCD  domain.

% PROPOSITION 2. 5
\begin{proposition}
\label{Proposition G}  Let $D$ be a   P$v$MD  such that there are
nonzero nonunits $ x_{1},x_{2}, \dots,$ $ x_{n} \in D$ with $((x_{1},
x_{2},\dots ,x_{n})D)^{v}=D$. Suppose that for at least one  $j$,
$1\leq j\leq n$,  $D_{x_{j}}$ is not a  G-GCD
 domain. Let $\ast $ be the  stable star operation of finite
type   induced on $D$ by the finite family of overrings $\calT :=
\{D_{x_{i}} \mid  1\leq i \leq n \}$,   i.e., $\ast =
\wedge_{\calT}$, or  equivalently, $I^\ast := \bigcap_{i=1}^n
ID_{x_i}$  for each $I \in \F(D)$.   Then there  exists   a nonzero
finitely generated ideal   $F$  of $D$ such that   $F^{v}$   is
not a $\ast $--invertible $(\ast $--)ideal. Consequently, in $D$
we have $\Cl^{\ast }(D)\subsetneq \Cl^{t}(D)$.
\end{proposition}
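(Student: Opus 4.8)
The plan is as follows. First I would record that, by \cite[Theorem 6]{[Z2]}, the hypothesis $((x_{1},x_{2},\dots ,x_{n})D)^{v}=D$ is equivalent to $D=\bigcap_{i=1}^{n}D_{x_{i}}$; hence $\calT$ is a defining family of flat quotient overrings of $D$, and $\ast =\wedge _{\calT}$ is indeed a stable (semi)star operation of finite type with $D^{\ast }=D$ and $\ast =\widetilde{\ast }$. Consequently $H^{\ast }D_{x_{i}}=HD_{x_{i}}$ and $H^{-1}D_{x_{i}}=(HD_{x_{i}})^{-1}$ for every $H\in \f(D)$ and every $i$, and, since $D$ is a P$v$MD, for each $F\in \f(D)$ the fractional ideal $F^{v}=F^{t}$ is a $t$--invertible $t$--ideal, so that $[F^{v}]\in \Cl^{t}(D)$.

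The core step is the reduction: for $F\in \f(D)$, the ideal $F^{v}$ is $\ast $--invertible if and only if $F^{v}D_{x_{i}}$ is an invertible ideal of $D_{x_{i}}$ for every $i$. To see this I would use stability and finite character to write $(F^{v}(F^{v})^{-1})^{\ast }=D$ iff $(F^{v}(F^{v})^{-1})D_{x_{i}}=D_{x_{i}}$ for all $i$; then, since $(F^{v})^{-1}=F^{-1}$ and $F^{-1}D_{x_{i}}=(FD_{x_{i}})^{-1}$, and since $FD_{x_{i}}\subseteq F^{v}D_{x_{i}}$ forces $(D_{x_{i}}:F^{v}D_{x_{i}})=(FD_{x_{i}})^{-1}$ (one inclusion is immediate, the other from the flat--overring formula applied to the finitely generated $F$), one gets $(F^{v}(F^{v})^{-1})D_{x_{i}}=(F^{v}D_{x_{i}})\,(D_{x_{i}}:F^{v}D_{x_{i}})$, which equals $D_{x_{i}}$ exactly when $F^{v}D_{x_{i}}$ is invertible. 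The same computation also yields $(F^{v}D_{x_{i}})^{v_{D_{x_{i}}}}=(FD_{x_{i}})^{v_{D_{x_{i}}}}$.

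Next I would exploit the hypothesis on $D_{x_{j}}$. By the very definition of a G--GCD domain, since $D_{x_{j}}$ is not G--GCD there is a finitely generated ideal $G$ of $D_{x_{j}}$ whose $v$--closure $G^{v_{D_{x_{j}}}}$ is not invertible; clearing denominators (using that $x_{j}$ is a unit in $D_{x_{j}}$) I can write $G=FD_{x_{j}}$ for some $F\in \f(D)$. If $F^{v}$ were $\ast $--invertible, then by the reduction $F^{v}D_{x_{j}}$ would be an invertible, hence divisorial, ideal of $D_{x_{j}}$, so $F^{v}D_{x_{j}}=(F^{v}D_{x_{j}})^{v_{D_{x_{j}}}}=(FD_{x_{j}})^{v_{D_{x_{j}}}}=G^{v_{D_{x_{j}}}}$ would be invertible --- a contradiction. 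Thus $F^{v}$ is a $t$--invertible $t$--ideal which is not $\ast $--invertible. Finally, $\Cl^{\ast }(D)\subseteq \Cl^{t}(D)$ since $\ast $ is a (semi)star operation of finite type; were $[F^{v}]\in \Cl^{t}(D)$ to lie in $\Cl^{\ast }(D)$, it would be represented by $J^{t}$ for some $\ast $--invertible $\ast $--ideal $J$, giving $F^{v}=cJ^{t}$ with $0\neq c\in K$; but $J^{t}$ is $\ast $--invertible (a routine check from $\ast \leq t$) and $\ast $--invertibility is invariant under multiplication by nonzero elements of $K$, so $F^{v}$ would be $\ast $--invertible. This contradiction shows $\Cl^{\ast }(D)\subsetneq \Cl^{t}(D)$.

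The main obstacle is the reduction step: proving that $\ast $--invertibility of $F^{v}$ is detected locally on the $D_{x_{i}}$, and in particular establishing $(D_{x_{j}}:F^{v}D_{x_{j}})=(FD_{x_{j}})^{-1}$ and the elementary but crucial fact that an invertible ideal of $D_{x_{j}}$ is divisorial. One must also be careful that ``$F^{v}$ is not $\ast $--invertible'' genuinely excludes $[F^{v}]$ from the subgroup $\Cl^{\ast }(D)$ of $\Cl^{t}(D)$ --- precisely the observation that $\ast $--invertibility depends only on the class modulo $\Prin(D)$.
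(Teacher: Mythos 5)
Your proof is correct and follows essentially the same route as the paper: reduce $\ast$--invertibility of $F^{v}$ to invertibility of $F^{v}D_{x_i}$ in each $D_{x_i}$, then use the failure of the G--GCD property in $D_{x_j}$ to produce a finitely generated $G=FD_{x_j}$ whose $v$--closure is not invertible. The only (harmless) difference is that where the paper invokes the P$v$MD hypothesis to get $G^{v_j}=F^{v}D_{x_j}$ directly, you prove the weaker identity $(F^{v}D_{x_j})^{v_j}=(FD_{x_j})^{v_j}$ from the flatness formula and finish via the divisoriality of invertible ideals; you also spell out the reduction step and the ``consequently'' part, which the paper leaves implicit.
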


\begin{proof} We note that given a  nonzero   finitely generated ideal  $F$   of $D$,  for  $F^{v}$
to be $\ast $--invertible it is essential that   $
(F^{-1}F^{v})D_{x_{i}}=D_{x_{i}}$   for each $i=1,2,\dots, n$.
Since for say $i=j$, $D_{x_{j}}$ is not a  G-GCD  domain, we
conclude that there is a finitely generated ideal  $H$   of
$ D_{x_{j}}$ such that  $H^{v_j}$   is not invertible in
$D_{x_{j}}$  (where $v_j$ denotes the $v$--operation on
$D_{x_j}$).    But as  $H$  is finitely generated in
$D_{x_{j}}$,  we can assume that  $H=FD_{x_{j}}$,   where
 $F$ is  a nonzero  finitely generated ideal of $D$.   But,
 as $D$ is a P$v$MD, $F$  is a $t$--invertible ideal of $D$;
thus  we have   $ H^{v_j}=F^{v}D_{x_{j}}$  \cite{[BZ]}.
Since  $H^{v_j}$  is not invertible, we  easily
conclude that  $F^{-1}F^{v}D_{x_{j}}\neq D_{x_{j}}$.   Hence
 $ (F^{-1}F^{v})^{\ast }\neq D$.   The ``consequently" part
is obvious.
\end{proof}

It seems important to also indicate the situations where there are
two distinct star operations, made from a general star operation
$\ast $, say $\mu (\ast )$ and $\nu (\ast )$, where $\mu (\ast
)\neq \nu (\ast)$, but  $\Cl^{\mu (\ast )}(D)=\Cl^{\nu (\ast
)}(D)$.   The constructions that we have in mind are the $\ast
_{\!_f}$ and the
 $\widetilde{\ast} \ (= \ast _{w})$  constructions from a
general star operation $\ast $ mentioned in the introduction. Now
$\ast _{\!_f}$ and  $\widetilde{\ast}$  are not always equal, but
as  a consequence of Proposition \ref{qsi}(5),
$\Inv^{\widetilde{\ast}}(D)= \Inv^{\ast _{\!_f}}(D)$.  Thus:

% PROPOSITION 2. 6
\begin{proposition}
\label{Proposition G1} Let $\ast $ be a star operation on an integral domain
$D$. Then  $\Cl^{\widetilde{\ast}}(D)= \Cl^{\ast _{\!_f}}(D)$.   In particular,  $
\Cl^{w}(D)=\Cl^{t}(D)$. \hfill $\Box$
\end{proposition}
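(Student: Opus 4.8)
The plan is to show that the two groups $\Inv^{\widetilde{\ast}}(D)$ and $\Inv^{\ast_{\!_f}}(D)$ are literally the same set of fractional ideals, with the same group operation, so that quotienting by the common subgroup $\Prin(D)$ yields the same class group. Since $\ast$ is a (genuine, ``classical'') star operation here, both $\widetilde{\ast}$ and $\ast_{\!_f}$ are (semi)star operations of finite type, so by the remarks in Section~2 we have $\QInv^{\widetilde{\ast}}(D)=\Inv^{\widetilde{\ast}}(D)$ and $\QInv^{\ast_{\!_f}}(D)=\Inv^{\ast_{\!_f}}(D)$; in particular every element of either group lies in $\boldsymbol{F}(D)$. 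Thus the whole statement is reduced to proving the equality of sets
\[
\{I\in\boldsymbol{F}(D)\mid I\text{ is }\widetilde{\ast}\text{--invertible and }I=I^{\widetilde{\ast}}\}
=\{I\in\boldsymbol{F}(D)\mid I\text{ is }\ast_{\!_f}\text{--invertible and }I=I^{\ast_{\!_f}}\}.
\]

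The first key input is Proposition~1.15(5): for $I\in\boldsymbol{F}(D)$, $I$ is $\ast_{\!_f}$--invertible if and only if $I$ is $\widetilde{\ast}$--invertible. So the two ``invertibility'' conditions coincide, and what remains is to check that for such an $I$, being $\ast_{\!_f}$--closed is equivalent to being $\widetilde{\ast}$--closed. Here I would argue: if $I$ is $\ast_{\!_f}$--invertible, then (by the observation cited from \cite{FP}, Proposition~2.6, that $\ast_{\!_f}$--invertible ideals are $\ast_{\!_f}$--finite) $I^{\ast_{\!_f}}=J^{\ast_{\!_f}}$ for some $J\in\boldsymbol{f}(D)$ with $J\subseteq I$; and for finitely generated ideals we have $J^{\ast_{\!_f}}=J^{\ast}=J^{\widetilde{\ast}}$ when $\ast$ is a finite-type star operation restricted to $\boldsymbol{f}(D)$ — more precisely, since $\widetilde{\ast}$--invertibility forces $J^{\widetilde{\ast}}D_M=JD_M$ at each $M\in\QMax^{\ast_{\!_f}}(D)$ and similarly $J^{\ast_{\!_f}}$ localizes correctly, one gets $I^{\ast_{\!_f}}=I^{\widetilde{\ast}}$ directly. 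Consequently $I=I^{\ast_{\!_f}}\iff I=I^{\widetilde{\ast}}$ on the class of $\ast_{\!_f}$--invertible ideals, giving the set equality above.

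The ``in particular'' clause is then immediate: $\ast=v$ is a star operation, $\widetilde{v}=w$ and $v_{\!_f}=t$ by the definitions in the introduction, so $\Cl^w(D)=\Cl^t(D)$. I expect the only subtle point to be the clean justification that $I^{\ast_{\!_f}}=I^{\widetilde{\ast}}$ for an $\ast_{\!_f}$--invertible $I$ — i.e.\ pinning down why the two finite-type hulls agree on invertible ideals even though $\ast_{\!_f}$ and $\widetilde{\ast}$ differ in general; this is exactly where Proposition~1.15(5) together with the $\ast_{\!_f}$--finiteness of invertible ideals does the work, so the argument is short once those are invoked. No genuine obstacle remains; the proof is essentially a bookkeeping consequence of Proposition~1.15(5), which is why the paper states it with a one-line proof ending in $\Box$.
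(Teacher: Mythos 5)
Your overall strategy is the same as the paper's: reduce everything to the set equality $\Inv^{\widetilde{\ast}}(D)=\Inv^{\ast_{\!_f}}(D)$, get the equivalence of the two invertibility conditions from Proposition \ref{qsi}(5), and then check that the two closures agree on invertible ideals. You have also correctly isolated the only nontrivial point, namely that $I^{\ast_{\!_f}}=I^{\widetilde{\ast}}$ when $I$ is $\ast_{\!_f}$--invertible. (One direction of the set equality is free: if $I=I^{\ast_{\!_f}}$, then $I\subseteq I^{\widetilde{\ast}}\subseteq I^{\ast_{\!_f}}=I$; it is the other direction that needs the displayed identity.)

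However, the justification you offer for that key identity contains a false step. The assertion that $J^{\ast_{\!_f}}=J^{\ast}=J^{\widetilde{\ast}}$ for all $J\in\boldsymbol{f}(D)$ cannot be right in general: since both $\ast_{\!_f}$ and $\widetilde{\ast}$ are of finite type, agreement on all finitely generated ideals would force $\ast_{\!_f}=\widetilde{\ast}$ on all of $\boldsymbol{F}(D)$, whereas the whole point of the proposition (as the paper notes just before stating it) is that these operations are in general distinct --- e.g. $t\neq w$ --- and yet yield the same class group. Your ``more precisely'' localization sketch is the right idea, but the phrase ``$J^{\ast_{\!_f}}$ localizes correctly'' is exactly the assertion that needs proof and is where invertibility must enter: one has to check that $x\in I^{\ast_{\!_f}}$ implies $x\in ID_M$ for every $M\in\QMax^{\ast_{\!_f}}(D)$, using that $ID_M$ is principal (from invertibility) and that $I^{-1}D_M=(ID_M)^{-1}$ (via a finitely generated $J\subseteq I$ with $J^{-1}=I^{-1}$, which exists because $I$ is $\ast_{\!_f}$--finite). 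The shorter repair is already available to you in the paper: by Proposition \ref{qsi-v}, a quasi--$\star$--invertible ideal satisfies $I^{\star}=I^{v(\star)}$, and since $\ast_{\!_f}$ and $\widetilde{\ast}$ are (semi)star operations we have $v(\ast_{\!_f})=v(\widetilde{\ast})=v$; hence for an $\ast_{\!_f}$--invertible (equivalently, by Proposition \ref{qsi}(5), $\widetilde{\ast}$--invertible) ideal $I$ one gets $I^{\ast_{\!_f}}=I^{v}=I^{\widetilde{\ast}}$ at once, and the group operations then also agree because $IJ$ is again invertible. With that substitution your argument is complete, and the ``in particular'' clause is fine as stated.
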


Let $\F^v(D):= \{I \in \F(D) \mid I = I^v \}$ be the set of
divisorial fractional ideals of $D$. It is  well known    that
$\F^v(D)$  is a group under  the    $v$--multiplication,   $\times^{\!v}$,  if and only
if $D$ is completely integrally closed \cite[Theorem 34.3]{[G]}.
In this situation, the group $\F^v(D)/\Prin(D)$ is called
the \it divisor class group of $D$. \rm  The $t$--class group has
often been dubbed as a generalization of the divisor class group
because, as we remarked above,   $\Cl^{t}(D)$ is precisely the
divisor class group for a Krull domain $D$. But the $t$--class
group is in general  far away from the divisor class group  (when
defined).  For instance,  for a completely integrally closed
domain $D$, the divisor class group of $D$ is zero only if $I^{v}$
is principal for every $I \in \F(D)$. However, there do exist
completely integrally closed GCD-domains (in fact, rank-one
valuation domains) which contain nonprincipal proper $v$--ideals.
One example that comes to mind is a rank-one valuation domain $V$
with value group  $\mathbb Q$  the rationals. In this case, the
divisor class group of $V$ is nonzero (see \cite[Example
2.7]{[Z-gen]} for an elementary verification), but as $V$ is a GCD-domain,  $\Cl^{t}(V)$  is zero \cite[Example 1.2]{[BZ]}.

We next give  a complete verification of the valuation domain
example mentioned above. Let us start by noting that,  for a
valuation domain $D$ which is not a field, there are at most two
distinct star operations on $D$, the $v$--operation and the
operation $d=t$ \cite[Exercise 12, p. 431]{[G]}.  Therefore,
  for a valuation domain $D$, we have  $\Cl^{t}(D) = \Pic(D) = \{0\}$.
Let us also note that a valuation domain $D$  is completely
integrally closed if and only if $\dim (D) \leq 1 $   \cite[Theorem
17.5(3)]{[G]},  and in this case,   as observed above,
$\F^v(D)$ is a group under $v$--multiplication.  Thus in
this case, $\Cl^v(D)$ coincides with the divisor class group of
$D$.  Next, let $ \mathit{G}$ be the value group of the valuation
domain $D$ and let  $\omega: K^{\bullet }\rightarrow \mathit{G} $
be the valuation that gives rise to $D$   (where $ K^{\bullet} :=
K \setminus \{0\}$).
   When $\dim(D) =1$,
either $D$ is a DVR or $\mathit{G}$ is  a dense subspace of the real numbers $\mathbb{R}$
(cf. \cite[page 193]{[G]} or \cite[Chapitre 6, \S4, N. 5, Propositions 7 et 8]{Bourbaki}).

% THEOREM 2.7
\begin{theorem}
\label{Theorem H} Let $D$ be a (one-dimensional) valuation domain with value
group $\mathit{G}\subseteq \mathbb{R}.$ Then
\begin{enumerate}
\item[\rm (1) \it] If $D$ is a DVR,
then \ $\Pic(D)= \Cl^{v}(D)= 0$.
\item[\rm (2) \it]  If $D$ is not a DVR, then \ $\Pic(D)=0$ and \ $\Cl^{v}(D)=\mathbb{R}/\mathit{G}
$.
\end{enumerate}
\end{theorem}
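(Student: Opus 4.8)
The plan is to identify $\F^v(D)$ concretely in terms of the value group and then read off the $v$--class group; the $\Pic$--part is immediate, since $\Pic(D)=\Cl^t(D)=\{0\}$ for every valuation domain (recalled before the statement). Write $\omega\colon K^{\bullet}\to G$ for the valuation defining $D$, so $D=\{x\in K\mid\omega(x)\ge 0\}\cup\{0\}$, and recall that a one--dimensional valuation domain is completely integrally closed, so that $\F^v(D)$ is a group under $v$--multiplication $\times^{\!v}$ and $\Cl^v(D)=\F^v(D)/\Prin(D)$.

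Case (1) is quick: a DVR is a PID, so every nonzero fractional ideal of $D$ is principal; hence $\F^v(D)=\Prin(D)$ and $\Cl^v(D)=0$ (and $\Pic(D)=0$ as noted).

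For Case (2), $G$ is a dense subgroup of $\mathbb R$ (by the dichotomy recalled just before the statement). First I would set up the standard dictionary for a valuation domain: a nonzero fractional ideal $I$ is determined by $\omega(I):=\{\omega(x)\mid 0\ne x\in I\}$, a nonempty bounded--below upper subset of $G$, with $I\subseteq J\iff\omega(I)\supseteq\omega(J)$ and $\omega(IJ)$ the upper closure of $\{\alpha+\beta\mid\alpha\in\omega(I),\ \beta\in\omega(J)\}$. For $c\in\mathbb R$ put $I_c:=\{x\in K\mid\omega(x)\ge c\}\cup\{0\}$; by density $\inf\omega(I_c)=c$ for all $c\in\mathbb R$. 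The main computation is: for any nonzero fractional ideal $J$, writing $c:=\inf\omega(J)\in\mathbb R$, one has $(D:J)=I_{-c}$ and hence $J^v=(D:(D:J))=I_{c}$ --- this rests on the elementary remark that $\beta+\gamma\ge 0$ for all $\gamma\in\omega(J)$ is equivalent to $\beta\ge-\inf\omega(J)$, together with density to evaluate $\inf\omega(I_{-c})=-c$. In particular each $I_c$ is divisorial and, conversely, every divisorial fractional ideal equals $I_c$ for a unique $c\in\mathbb R$; thus $c\mapsto I_c$ is a bijection $\mathbb R\to\F^v(D)$. Since $I_0=D$ and $I_c\times^{\!v}I_d=(I_cI_d)^v=I_{c+d}$ (again $\inf\omega(I_cI_d)=c+d$ uses density), this bijection is a group isomorphism $(\mathbb R,+)\cong(\F^v(D),\times^{\!v})$. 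Finally $xD=I_{\omega(x)}$ and $\omega$ is onto $G$, so $\Prin(D)$ corresponds exactly to $G$; therefore $\Cl^v(D)=\F^v(D)/\Prin(D)\cong\mathbb R/G$, and $\Pic(D)=0$.

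The step I expect to demand the most care is the $v$--closure computation $J^v=I_{\inf\omega(J)}$, and in particular the bookkeeping about whether various infima are attained: when $c\notin G$, the ideal $I_c$ has no element of minimal value (i.e.\ $\{x\mid\omega(x)\ge c\}=\{x\mid\omega(x)>c\}$), and it is precisely these ``gap'' ideals that furnish the non--principal divisorial ideals and force the quotient $\mathbb R/G$ to appear; density of $G$ is exactly what guarantees every such gap is realized by a divisorial ideal. Everything else is routine manipulation with upper subsets of $G$.
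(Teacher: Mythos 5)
Your proposal is correct and is essentially the paper's own argument: the paper defines $\varphi(I):=\sup\{\omega(x)\mid I\subseteq xD\}$, which is exactly your $\inf\omega(I)$, and proves via four lemmas that $\varphi$ induces a group isomorphism $\F^v(D)\cong\mathbb R$ carrying $\Prin(D)$ onto $G$; your inverse map $c\mapsto I_c$ is the same construction run in the opposite direction (the paper's Lemma on surjectivity builds precisely your $I_\alpha=\bigcap\{xD\mid\omega(x)\le\alpha\}$). The point you flag as delicate --- that $J^v=I_{\inf\omega(J)}$ and that density of $G$ is what makes the non-principal divisorial ideals appear --- is exactly where the paper's proof also concentrates its effort.
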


\it Proof. \rm Suppose that $D$ is not a DVR; so $\mathit{G}$ is
dense in $\mathbb{R}$. Define a map  $ \varphi: \F(D)\rightarrow
\mathbb{R}$ by $\varphi (I):=\sup \{\omega (x)\mid
I\subseteq xD $ for $x\in  K^{\bullet}\}$.  Note that
$\varphi $ is well-defined since

(a) \ $yD\subseteq I\subseteq xD$ for $x,y\in  K^{\bullet}$
implies $\omega (x)\leq \omega (y)$,

 (b) \ $\mathit{G}\subseteq \mathbb{R}$,  with $\mathbb{%
R}$\textbf{\ }complete, \ and

(c) \ $\varphi (xD)=\omega (x)$ for $x\in  K^{\bullet}$.

 Using these observations, we also note that $\varphi (I)=\sup \{\omega
(x)\mid  x \in K^\bullet \mbox { and  } \omega (x) \leq \omega
(i)$   for all  $0 \ne i\in I\}$.  Therefore,
 for  all  $0 \ne i\in I$,  we have $\omega (i)\geq
\varphi (I)$.   For the same reasons, if $I\subseteq xD$, then
$\omega (x)\leq \varphi (I)$. The proof  of Theorem \ref{Theorem
H}  then follows from the following four lemmas.

% LEMMA 2.8
\begin{lemma}
\label{Lemma I} Let $D$ be as in Theorem \ref{Theorem H}.  Then
the following statements are equivalent for $I,J\in \F(D)$.
\begin{enumerate}

\item[\rm (i)\it]
$\varphi (I)=\varphi (J)$.

\item[\rm (ii)\it] $\{xD\mid I\subseteq xD \mbox{ for }  x\in
 K^{\bullet}\}=\{xD \mid J\subseteq xD \mbox{ for }  x\in  K^{\bullet}\}$.

 \item[\rm (iii)\it]$I^{v}=J^{v}$.

 \end{enumerate}
\end{lemma}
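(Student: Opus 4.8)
The plan is to prove the cycle of implications (i)$\Rightarrow$(ii)$\Rightarrow$(iii)$\Rightarrow$(i), exploiting the density of $G$ in $\mathbb{R}$ for the first implication and the valuation structure throughout. The key object is the set of principal fractional ideals containing $I$; call it $\mathcal{P}(I):=\{xD\mid I\subseteq xD,\ x\in K^{\bullet}\}$. Because $D$ is a valuation domain, $xD\subseteq yD$ iff $\omega(y)\le\omega(x)$, so $\mathcal{P}(I)$ is totally ordered by inclusion and is precisely an initial segment (downward-closed under inclusion): if $I\subseteq xD$ and $xD\subseteq zD$ then $I\subseteq zD$. Moreover, from observation (c) in the proof of Theorem \ref{Theorem H}, $\varphi(xD)=\omega(x)$, and from the reformulation stated there, $\varphi(I)=\sup\{\omega(x)\mid xD\in\mathcal{P}(I)\}$.

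\emph{(i)$\Rightarrow$(ii).} Suppose $\varphi(I)=\varphi(J)=:r$. I claim $\mathcal{P}(I)=\mathcal{P}(J)$. By symmetry it suffices to show $\mathcal{P}(I)\subseteq\mathcal{P}(J)$. Let $I\subseteq xD$ with $x\in K^{\bullet}$; I must show $J\subseteq xD$, i.e. $\omega(j)\ge\omega(x)$ for all $0\ne j\in J$. Since (as noted before the lemma) $\omega(i)\ge\varphi(I)=r$ for all $0\ne i\in I$, and $I\subseteq xD$ forces $\omega(x)\le\omega(i)$ for all such $i$, we in fact have $\omega(x)\le\varphi(I)=r$. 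Now suppose toward a contradiction that $\omega(x)>\omega(j)$ for some $0\ne j\in J$. Then $\omega(j)<\omega(x)\le r=\varphi(J)=\sup\{\omega(y)\mid J\subseteq yD\}$, so there is $y\in K^{\bullet}$ with $J\subseteq yD$ and $\omega(j)<\omega(y)$; but $J\subseteq yD$ gives $\omega(y)\le\omega(j)$, a contradiction. Hence $\omega(x)\le\omega(j)$ for all $0\ne j\in J$, i.e. $J\subseteq xD$, as required. (Note that density of $G$ in $\mathbb{R}$ is what guarantees the sup defining $\varphi$ is approached by genuine elements $\omega(y)\in G$; this is the only delicate point, and it is the main obstacle — one must be careful that $\varphi(I)=\varphi(J)$ really does force the two initial segments of principal ideals to coincide, since a priori one could worry about whether the supremum is attained. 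The argument above sidesteps attainment by working with strict inequalities.)

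\emph{(ii)$\Rightarrow$(iii).} In a valuation domain every nonzero fractional ideal $I$ satisfies $I^{v}=\bigcap\{xD\mid I\subseteq xD,\ x\in K^{\bullet}\}$: indeed $I^{-1}=(D:I)$ and $I^{v}=(I^{-1})^{-1}$, and since the overring structure is linearly ordered, $I^{v}$ is the intersection of all principal (necessarily divisorial) fractional ideals containing $I$. Thus $I^{v}=\bigcap\mathcal{P}(I)$ and $J^{v}=\bigcap\mathcal{P}(J)$, so $\mathcal{P}(I)=\mathcal{P}(J)$ immediately yields $I^{v}=J^{v}$.

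\emph{(iii)$\Rightarrow$(i).} If $I^{v}=J^{v}$, then since $\varphi(I)$ depends only on which principal ideals contain $I$, and $I\subseteq xD$ iff $I^{v}\subseteq xD$ (as $xD$ is divisorial), we get $\mathcal{P}(I)=\mathcal{P}(I^{v})=\mathcal{P}(J^{v})=\mathcal{P}(J)$, whence $\varphi(I)=\sup\{\omega(x)\mid xD\in\mathcal{P}(I)\}=\varphi(J)$. This closes the cycle and completes the proof. \hfill $\Box$
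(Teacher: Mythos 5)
Your proof is correct and essentially the same as the paper's: the only nontrivial implication, (i)$\Rightarrow$(ii), is handled by the same contradiction argument (an element of $J$ of too-small value would force $\varphi(J)<\varphi(I)$ via the supremum property), merely phrased element-wise rather than through the chain $I\subseteq xD\subsetneq yD\subseteq J$, and the remaining implications follow from $I^{v}=\bigcap\{xD\mid I\subseteq xD,\ x\in K^{\bullet}\}$ exactly as in the paper. One small remark: your parenthetical crediting the density of $\mathit{G}$ in $\mathbb{R}$ is not needed here --- the supremum of any set of reals is approached by elements of that set regardless of density, and density is really used later (e.g., in the surjectivity argument of Lemma \ref{Lemma J}).
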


\begin{proof}
Clearly  (ii)$\Leftrightarrow $(iii)   and
(ii)$\Rightarrow $(i).   For  (i)$ \Rightarrow$(ii),
suppose that $\varphi(I)=\varphi(J)$, but there is an $ x\in
K^{\bullet}$ such that $I\subseteq xD$ and $J\nsubseteq xD$.
Then $ xD\subsetneq J$. So there is a $y\in J$ such that
$I\subseteq xD\subsetneq yD\subseteq J.$ But then as already
noted, we have $\varphi (J)\leq \omega (y)<\omega
(x)\leq \varphi (I)$, a contradiction.
\end{proof}

From the above lemma, it follows that $\varphi $ restricts to a
 (well-defined)   injective map
$\varphi:\F^v(D)\rightarrow \mathbb{R}$.

% LEMMA 2.9
\begin{lemma}
\label{Lemma L}   Let $D$ and $\mathit{G}$ be as in Theorem
\ref{Theorem H}, and let $\varphi:  \F^v(D) \rightarrow \mathbb{R}$ be defined as above.  Then  $$\varphi
^{-1}(\mathit{G})=\Prin(D)\,.$$
\end{lemma}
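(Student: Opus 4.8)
The plan is to prove the equality $\varphi^{-1}(\mathit{G})=\Prin(D)$ by the two opposite inclusions, using only the elementary properties of $\varphi$ recorded immediately before the lemma: (a) $\varphi(xD)=\omega(x)$ for every $x\in K^{\bullet}$; (b) $\omega(i)\geq\varphi(I)$ for every $0\neq i\in I$; and (c) $I\subseteq xD$ implies $\omega(x)\leq\varphi(I)$. I also intend to use the standard description of the divisorial closure of a fractional ideal, $I^{v}=\bigcap\{xD\mid x\in K^{\bullet},\ I\subseteq xD\}$ for $I\in\F(D)$, together with the basic valuation-domain equivalence $\omega(a)\leq\omega(b)\Longleftrightarrow b\in aD\Longleftrightarrow bD\subseteq aD$.

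For the inclusion $\Prin(D)\subseteq\varphi^{-1}(\mathit{G})$, note that every nonzero principal fractional ideal $xD$ is divisorial, so $xD\in\F^v(D)$, and then $\varphi(xD)=\omega(x)\in\mathit{G}$ by (a). For the reverse inclusion, let $I\in\F^v(D)$ with $\varphi(I)=g\in\mathit{G}$; since $\omega\colon K^{\bullet}\to\mathit{G}$ is surjective, pick $x\in K^{\bullet}$ with $\omega(x)=g=\varphi(I)$. First, $I\subseteq xD$: each $0\neq i\in I$ satisfies $\omega(i)\geq\varphi(I)=\omega(x)$ by (b), hence $i\in xD$. Second, $xD\subseteq I$: if $yD$ is any principal fractional ideal with $I\subseteq yD$, then (c) gives $\omega(y)\leq\varphi(I)=\omega(x)$, whence $xD\subseteq yD$; intersecting over all such $y$ and using $I=I^{v}=\bigcap\{yD\mid y\in K^{\bullet},\ I\subseteq yD\}$ yields $xD\subseteq I$. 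Therefore $I=xD\in\Prin(D)$, as wanted.

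I do not expect a genuine obstacle here; the argument is short. The one point that must be handled with care is the essential use of divisoriality in the second inclusion: it is exactly the hypothesis $I=I^{v}$ that forces $xD$, the smallest principal fractional ideal containing $I$, to lie inside $I$. Without it the inclusion fails --- for instance, for $c\in\mathit{G}$ the non-divisorial fractional ideal $\{i\mid\omega(i)>c\}\cup\{0\}$ has $\varphi$-value $c\in\mathit{G}$ but is not principal --- so the statement is genuinely about $\F^v(D)$ and not about all of $\F(D)$. Density of $\mathit{G}$ in $\mathbb{R}$ and completeness of $\mathbb{R}$ are not invoked again in this argument; they have already been used to make $\varphi$ well defined and, by Lemma \ref{Lemma I}, injective on $\F^v(D)$, and here we only need surjectivity of the valuation $\omega$.
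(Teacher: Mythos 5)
Your proof is correct and follows essentially the same route as the paper: both directions, with the nontrivial one handled by choosing $x\in K^{\bullet}$ with $\omega(x)=\varphi(I)$ and identifying $I$ with $xD$. The only cosmetic difference is that the paper concludes $I^{v}=xD$ by citing the injectivity statement of Lemma \ref{Lemma I}, whereas you verify the two inclusions $I\subseteq xD$ and $xD\subseteq I$ directly from properties (b) and (c) and the divisoriality of $I$ --- which amounts to inlining the relevant case of that lemma.
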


\begin{proof}
Let $I\in \varphi ^{-1}(\mathit{G}).$ Then $\varphi (I)=\alpha \in \mathit{G}$; so there is an
$x\in  K^{\bullet}$ such that $\omega (x)=\alpha .$ Since $\omega (x)=\varphi
(xD)$, we have $\varphi (I)=\varphi (xD).$ By Lemma \ref{Lemma I}, we have  $%
I^{v}=xD$.   But as $I\in \F^v(D)$, we have  $I=I^{v}$.   Conversely,
suppose that $xD\in $ Prin($D$). Then $\varphi (xD)=\omega (x)\in \mathit{G}.$
\end{proof}

% LEMMA 2. 10
\begin{lemma}
\label{Lemma J}   Let $D$ and $\mathit{G}$ be as in Theorem
\ref{Theorem H}.  Then the map $\varphi: \F^v(D)\rightarrow \mathbb{R}$    defined above is surjective.
\end{lemma}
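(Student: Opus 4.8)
The plan is to prove surjectivity by producing, for each $r\in\mathbb{R}$, an explicit divisorial fractional ideal mapping to $r$, splitting into the cases $r\in\mathit{G}$ and $r\notin\mathit{G}$, where $\mathit{G}=\omega(K^{\bullet})$ is (in the non-DVR case of Theorem \ref{Theorem H}) a non-cyclic --- hence dense and unbounded above and below --- subgroup of $\mathbb{R}$. If $r\in\mathit{G}$, choose $x\in K^{\bullet}$ with $\omega(x)=r$; then $xD\in\F^v(D)$, since principal fractional ideals are divisorial, and $\varphi(xD)=\omega(x)=r$ by observation (c) preceding Lemma \ref{Lemma I}. (This case is already implicit in the proof of Lemma \ref{Lemma L}.)

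The main case is $r\notin\mathit{G}$. Set $I_r:=\{0\}\cup\{y\in K^{\bullet}\mid\omega(y)\geq r\}$, which equals $\{0\}\cup\{y\in K^{\bullet}\mid\omega(y)>r\}$ because $r\notin\mathit{G}$. First I would check that $I_r$ is a $D$--submodule of $K$, which is immediate from $\omega(dy)=\omega(d)+\omega(y)\geq\omega(y)$ for $0\neq d\in D$ and from the valuation inequality $\omega(y_1+y_2)\geq\min\{\omega(y_1),\omega(y_2)\}$; and that $I_r$ is in fact a fractional ideal --- it is nonzero because $\mathit{G}$ is unbounded above, and it has a common denominator, namely any $0\neq c\in D$ with $\omega(c)\geq\max\{0,-r\}$, such $c$ existing again because $\mathit{G}$ is unbounded above. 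Next I would compute $\varphi(I_r)$: one verifies $I_r\subseteq xD$ if and only if $\omega(x)\leq r$, where for the nontrivial direction, given $\omega(x)>r$ one uses density of $\mathit{G}$ to pick $g\in\mathit{G}$ with $r<g<\omega(x)$ and then $y\in K^{\bullet}$ with $\omega(y)=g$, so that $y\in I_r$ but $y\notin xD$. Hence $\varphi(I_r)=\sup\{\omega(x)\mid\omega(x)\leq r\}=r$, once more because $\mathit{G}$ is dense in $\mathbb{R}$. Finally, $I_r$ lies in $\F^v(D)$: one can check directly that $I_r^{v}=\bigcap\{xD\mid\omega(x)\leq r\}=\{0\}\cup\{y\in K^{\bullet}\mid\omega(y)>r\}=I_r$, or alternatively apply Lemma \ref{Lemma I} to the pair $I_r$, $I_r^{v}$ (which have the same $v$--closure) to get $\varphi(I_r^{v})=\varphi(I_r)=r$ with $I_r^{v}\in\F^v(D)$.

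The only real obstacle is keeping the two uses of the density of $\mathit{G}$ in $\mathbb{R}$ straight: it is exactly because $\mathit{G}$, while omitting $r$, comes arbitrarily close to $r$ from both sides that $\varphi(I_r)$ is pinned down to equal $r$ rather than a neighboring value in $\mathit{G}$, and it is the unboundedness of $\mathit{G}$ that makes $I_r$ an honest fractional ideal and not merely a $D$--submodule of $K$. Everything else is routine valuation arithmetic.
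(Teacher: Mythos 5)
Your proof is correct and follows essentially the same route as the paper: the ideal you write down as $I_r=\{0\}\cup\{y\in K^{\bullet}\mid \omega(y)\geq r\}$ is exactly the paper's $I_{\alpha}=\bigcap\{xD\mid \omega(x)\leq \alpha\}$, and both arguments pin down $\varphi(I_r)=r$ by the same two density considerations. The only cosmetic difference is that the paper's definition as an intersection of principal fractional ideals makes divisoriality immediate, whereas your valuation-cutoff description requires the (routine) extra checks you correctly supply.
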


\begin{proof}
Let $\alpha \in \mathbb{R}$.  Define  $I_{\alpha} :=\bigcap \{xD \mid x\in  K^{\bullet}$ with $
\omega (x)\leq \alpha \}$.   Since $\mathit{G}$ is dense
in $\mathbb{R}$, there is a $ y\in  K^{\bullet}$ such that
$\omega (y)>\alpha $.  Therefore  $yD \subseteq xD$ for
each   $x\in  K^{\bullet}$  with $\omega (x)\leq \alpha$. This ensures,
in particular,  that $I_{\alpha} $ is nonzero, and so
$I_{\alpha} \in \F^v(D)$. In order to
establish the surjectivity, we show that $\varphi
(I_{\alpha} )=\alpha$.  Clearly $\varphi (I_{\alpha}
)\geq \alpha $ because $\mathit{G}$ is dense in $\mathbb{R}$.
Suppose that $\varphi (I_{\alpha} )=\beta >\alpha$. So
there is a $z\in  K^{\bullet}$ such that $I_{\alpha}
\subseteq zD$ and $\alpha < \omega (z)=\gamma \leq \beta $.
 Let $\gamma
^{\prime }\in \mathit{G}$ such that $\alpha <\gamma^{\prime }<\gamma  \leq  \beta $ and
let $\omega ( z')=\gamma ^{\prime }$ for some $ z'\in  K^{\bullet}$.  Then for any $
x\in  K^{\bullet}$ with $\omega (x)\leq \alpha$, we have $\omega (x)<\omega
( z')$, forcing $ z'  \in I_{\alpha} $.  Now from  $\gamma ^{\prime }<\gamma $,
we have $\omega ( z' )<\omega (z)$, which is equivalent to $zD\subsetneq
 z'D  \ ( \subseteq I_{\alpha})$,  a contradiction.
\end{proof}

%LEMMA 2.11
\begin{lemma}
\label{Lemma K} The map $\varphi:  \F^v(D)\rightarrow \mathbb{R}$  is a group homomorphism.
\end{lemma}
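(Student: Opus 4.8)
The plan is to show that $\varphi$ carries the $v$--multiplication $\times^{\!v}$ on $\F^v(D)$ to ordinary addition on $\mathbb{R}$, i.e.\ that $\varphi\big((IJ)^v\big)=\varphi(I)+\varphi(J)$ for all $I,J\in\F^v(D)$. First I would reduce to the ordinary product ideal: since $(IJ)^v$ and $IJ$ have the same $v$--closure, Lemma~\ref{Lemma I} ((i)$\Leftrightarrow$(iii)) gives $\varphi\big((IJ)^v\big)=\varphi(IJ)$, so it suffices to prove $\varphi(IJ)=\varphi(I)+\varphi(J)$ for $IJ\in\F(D)$.

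The key preliminary is the ``dual'' description $\varphi(E)=\inf\{\omega(e)\mid 0\neq e\in E\}$, valid for every $E\in\F(D)$. One inequality is already recorded just before the lemmas ($\omega(e)\geq\varphi(E)$ for all $0\neq e\in E$, so the infimum is $\geq\varphi(E)$, and it is finite since $E$ is fractional). For the reverse, given $g\in\mathit{G}$ with $g<\inf\{\omega(e)\mid 0\neq e\in E\}$, write $g=\omega(x)$; then $\omega(e)>\omega(x)$ for all $0\neq e\in E$, hence $E\subseteq xD$ and $\omega(x)\leq\varphi(E)$. Since $\mathit{G}$ is dense in $\mathbb{R}$ (we are in the non-DVR case of Theorem~\ref{Theorem H}), the supremum of such $g$ equals $\inf\{\omega(e)\mid 0\neq e\in E\}$, whence $\varphi(E)=\inf\{\omega(e)\mid 0\neq e\in E\}$.

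With this in hand both inequalities are short. For $\varphi(IJ)\geq\varphi(I)+\varphi(J)$: a nonzero element of $IJ$ has the form $k=\sum_{l}i_lj_l$ with $i_l\in I$, $j_l\in J$, and since $\omega$ is a valuation, $\omega(k)\geq\min_{l}\big(\omega(i_l)+\omega(j_l)\big)\geq\varphi(I)+\varphi(J)$; taking the infimum over $0\neq k\in IJ$ gives the claim. For $\varphi(IJ)\leq\varphi(I)+\varphi(J)$: given $\varepsilon>0$, choose $i\in I$, $j\in J$ with $\omega(i)<\varphi(I)+\varepsilon$ and $\omega(j)<\varphi(J)+\varepsilon$; then $0\neq ij\in IJ$, so $\varphi(IJ)\leq\omega(ij)=\omega(i)+\omega(j)<\varphi(I)+\varphi(J)+2\varepsilon$, and letting $\varepsilon\to 0$ finishes it. Hence $\varphi\colon\F^v(D)\to\mathbb{R}$ is a group homomorphism.

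The step I expect to require the most care is the reduction to the infimum formula: $\varphi$ was defined as a supremum over principal ideals \emph{containing} $E$, and the argument needs it to be genuinely approximated from above by valuations of \emph{elements} of $E$; this is exactly where density of $\mathit{G}$ in the complete ordered field $\mathbb{R}$ (equivalently, the standing assumption that $D$ is not a DVR) enters. Once Lemma~\ref{Lemma K} is proved, Theorem~\ref{Theorem H}(2) follows by combining it with the preceding lemmas: $\varphi$ is injective by Lemma~\ref{Lemma I}, surjective by Lemma~\ref{Lemma J}, and $\ker\varphi=\varphi^{-1}(\mathit{G})=\Prin(D)$ by Lemma~\ref{Lemma L}, so $\Cl^v(D)=\F^v(D)/\Prin(D)\cong\mathbb{R}/\mathit{G}$, while $\Pic(D)=0$ because a valuation domain is quasilocal.
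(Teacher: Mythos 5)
Your proof is correct, and it is organized differently from the paper's. The paper works throughout with the defining supremum $\varphi(I)=\sup\{\omega(x)\mid I\subseteq xD\}$: it gets $\varphi((IJ)^v)\geq\varphi(I)+\varphi(J)$ by multiplying containing principal ideals, and then rules out strict inequality by a contradiction argument — using density of $\mathit{G}$ to pick $\alpha,\beta\in\mathit{G}$ with $\varphi(I)\leq\alpha$, $\varphi(J)\leq\beta$, $\alpha+\beta<\varphi((IJ)^v)$, producing $xyD\subseteq IJ$ and contradicting the supremum. You instead first prove the dual description $\varphi(E)=\inf\{\omega(e)\mid 0\neq e\in E\}$ (which is essentially a sharpening of the observation, recorded just before Lemma~\ref{Lemma I}, that $\omega(i)\geq\varphi(I)$ for all $0\neq i\in I$), and then additivity falls out of two direct estimates: the ultrametric inequality $\omega(\sum i_lj_l)\geq\min_l(\omega(i_l)+\omega(j_l))$ for the lower bound, and an $\varepsilon$-choice of elements for the upper bound. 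Both arguments invoke density of $\mathit{G}$ in $\mathbb{R}$ at exactly one point (the paper to choose $\alpha,\beta$; you to prove the infimum formula), so neither is more economical in hypotheses; your version has the mild advantage that the infimum formula isolates the analytic content once and makes the homomorphism property a routine verification, while the paper's version avoids introducing the auxiliary formula. One small point to keep explicit: in your lower bound you implicitly apply the infimum description to the product ideal $IJ$ rather than to its $v$-closure, which is legitimate because the formula was established for arbitrary $E\in\F(D)$ and $\varphi(IJ)=\varphi((IJ)^v)$ by Lemma~\ref{Lemma I}.
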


\begin{proof}
Let $I,J\in \F^v(D)$. We show that $\varphi
((IJ)^{v})=\varphi (I)+\varphi (J).$ Let $I\subseteq xD$ and
$J\subseteq yD$  for $x,y\in  K^{\bullet}.$ Then
$IJ\subseteq xyD$, and hence $\gamma  :=  \varphi
((IJ)^{v})=\varphi (IJ)\geq \omega (xy)=\omega (x)+\omega (y).$ Thus $
\varphi ((IJ)^{v})\geq \varphi (I)+\varphi (J).$ Suppose that $\varphi
(I)+\varphi (J)<\varphi ((IJ)^{v}).$ Then there are $\alpha ,\beta \in \mathit{G}$
such that $\varphi (I)\leq \alpha ,$ $\varphi (J)\leq \beta ,$ and $\alpha
+\beta <\gamma .$ Choose $x,y\in  K^{\bullet}$ with $\omega (x)=\alpha $ and $
\omega (y)=\beta .$ Then $xD\subseteq I$ and $yD\subseteq J$; so $
xyD\subseteq IJ.$ Let $IJ\subseteq zD$ for $z\in  K^{\bullet}$. Then as $
xyD\subseteq zD$, we have $\omega (z)\leq \omega (xy)=\omega (x)+\omega
(y)=\alpha +\beta <\gamma$. So $\gamma =\varphi ((IJ)^{v})=\sup \{\omega
(z)\mid IJ\subseteq zD  \mbox{ and } z \in  K^\bullet\}\leq \omega (xy)=\alpha +\beta <\gamma $, a
contradiction.
\end{proof}

Given a rank-one valuation domain $D$,  if we assume in Theorem
\ref{Theorem H} that   $D$  is not a DVR and that   $\mathit{G}\neq \mathbb{R}$, then
$\Cl^{v}(D)$  (which coincides in this case with the divisor class
group of $D$)  is not zero, whereas the $t$--class group
$\Cl^{t}(D)  \ (= \Pic(D))$   is zero.
 Having shown that both the divisor class group and $t$--class group can
coexist without being equal, we conclude that the $t$--class group is not a
generalization of the divisor class group.

%%%%%%%%%%%%%%%%%%
%% SECTION 3
%%%%%%%%%%%%%%%%

\section{$v$--class groups and valuation domains}

  In the previous section,  we have seen the divisor class group
as the $v$--class group  in the case of one-dimensional valuation domains (Theorem \ref{Theorem H}) and, more generally, for  completely  integrally closed domains  \cite[Theorems 17.5 (3) and  34.3]{[G]}.  But thanks to the generality of its
definition, the group  $\Cl^{v}(D)$  does not need $D$ to have any
special properties. In other words, the $v$--class group is
defined for any   integral   domain. Now let us note that the
$v$--operation being the coarsest    star   operation, the
$v$--class group (for $v \neq t$) has hitherto been neglected. 
  So, we do not have a lot of examples from the literature to offer.   However,   to show that the $v$--class group has a life of its   own and  some interesting properties, we study the $v$--class group of some integral domains of interest.

\smallskip

 Let us first start  with some relevant cases  where the $v$--class group is the same
as the $t$--class group.

% PROPOSITION  3.1
 \begin{proposition} \label{cl0}  
 \rm (1) \it  Let $D$ be an integral domain
such that
 $F^{-1}$   is of finite type  for all  $F \in  \f(D)$. Then  $\Cl^{t}(D)=\Cl^{v}(D)$.

\begin{enumerate}

 \item[(2)]   Let $D$ be a  valuation domain  (in particular, $0 = \Pic(D) = \Cl^t(D)$).
 
{\begin{enumerate}

\item[(2a)] Assume that $D$ is  one-dimensional.  Then
$\Cl^{v}(D)=0$   if and only if either $D$ is a DVR or $D$ has
value group $\mathbb{R}$.   Moreover, $\Cl^v(D)$  is a divisible abelian
group and may have torsion elements.  However, $\Cl^v(D)$ is torsion-free if
the value group of  $D$ is
$\mathbb Q$.

\item[(2b)]  Assume that $D$ is an $n$-dimensional valuation domain,  $1 \leq n\leq
\infty $,  with maximal ideal $M$.
 If $M$ is principal,
then all nonzero fractional ideals of $D$ are divisorial. So
$d =v$, and thus  $ 0= \Pic(D)= \Cl^{t}(D)=\Cl^{v}(D) $.  
\end{enumerate}
}
\end{enumerate}
\end{proposition}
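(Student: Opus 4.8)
The plan is to prove the three parts separately, each being a short deduction from material already developed in the excerpt.

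\textbf{Part (1).} The inclusion $\Cl^t(D) \subseteq \Cl^v(D)$ is already recorded (it is the special case $\star = t$ of the general fact $\Cl^{\star}(D) \subseteq \Cl^{v(\star)}(D)$ for finite-type $\star$, equivalently the inclusion $\Pic(D) \subseteq \Cl^t(D) \subseteq \Cl^v(D)$). So the only thing to show is the reverse inclusion, and since both groups sit inside the same ambient multiplicative structure with the same $\Prin(D)$ in the denominator, it suffices to prove $\Inv^v(D) \subseteq \Inv^t(D)$, i.e.\ that every $v$--invertible $v$--ideal $I$ is $t$--invertible. If $(II^{-1})^v = D$, then since $I$ is $v$--invertible it is $v$--finite, so there is $F \in \f(D)$ with $F^v = I^v = I$; moreover $I^{-1} = F^{-1}$, and by hypothesis $F^{-1}$ is of finite type, say $F^{-1} = (G^{-1})$-ish --- more precisely there is $G \in \f(D)$ with $G^v = (F^{-1})^v = I^{-1}$. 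Then $I I^{-1}$ has the same $v$--closure as the finitely generated ideal $FG$, so from $(FG)^v = (II^{-1})^v = D$ and the fact that $F^v = F^t$, $(F^{-1})^v = (F^{-1})^t$ for finitely generated $F$ (hence $(FG)^v = (FG)^t$), we get $(II^{-1})^t = (FG)^t = D$. Thus $I$ is $t$--invertible and a $t$--ideal (being $I^v = I^t$), giving $\Inv^v(D) = \Inv^t(D)$ and hence $\Cl^v(D) = \Cl^t(D)$.

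\textbf{Part (2a).} The equivalence ``$\Cl^v(D) = 0$ iff $D$ is a DVR or has value group $\mathbb{R}$'' is an immediate corollary of Theorem~\ref{Theorem H}: if $D$ is a DVR then $\Cl^v(D) = 0$ by part (1) of that theorem; if $D$ is not a DVR then $\Cl^v(D) \cong \mathbb{R}/G$ by part (2), which vanishes exactly when $G = \mathbb{R}$. Divisibility of $\Cl^v(D) \cong \mathbb{R}/G$ follows since $\mathbb{R}$ is a divisible abelian group and quotients of divisible groups are divisible. For the torsion remarks: $\mathbb{R}/G$ can certainly have torsion in general (e.g.\ take $G = \mathbb{Z}$, so $\mathbb{R}/\mathbb{Z}$ has lots of torsion); and when $G = \mathbb{Q}$, $\mathbb{R}/\mathbb{Q}$ is torsion-free because $n\alpha \in \mathbb{Q}$ with $n \geq 1$ forces $\alpha \in \mathbb{Q}$. (One should just remark that a rank-one valuation domain with value group $\mathbb{Q}$ exists, which is classical; alternatively the example of value group $\mathbb{Q}$ already appeared in Section~2.)

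\textbf{Part (2b).} Here $D$ is an $n$-dimensional valuation domain, $1 \le n \le \infty$, with principal maximal ideal $M = mD$. The claim to establish is that every nonzero fractional ideal of $D$ is divisorial; then $d = v$ on $\F(D)$, and since $\Cl^t(D) = \Pic(D) = 0$ for any valuation domain (recalled in the excerpt, there being at most the two star operations $d=t$ and $v$), we get $\Cl^v(D) = \Cl^t(D) = \Pic(D) = 0$. To see every $I \in \F(D)$ is divisorial: in a valuation domain the nonzero fractional ideals are linearly ordered, and $I^v = \bigcap \{xD : I \subseteq xD,\ x \in K^\bullet\}$. If $I$ is not principal, then for every $xD \supsetneq I$ one has $xD \supseteq mxD \supsetneq I$ as well (using that $M = mD$ is principal, so the ideal immediately ``below'' $xD$ is $mxD$); hence $I \subseteq \bigcap_{xD \supsetneq I} mxD$, and comparing with $I^v = \bigcap_{xD \supsetneq I} xD$ one checks these two intersections coincide and both equal $I$ --- the point being that any $z \in I^v \setminus I$ would satisfy $I \subsetneq zD$, forcing $zD \subseteq xD$ for all $xD \supsetneq I$ and in particular $z/m \in xD$ for all such $x$, contradicting that $z/m \notin I^v = I^{v}$... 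I would phrase it cleanly as: $I^v/I \ne 0$ would give a fractional ideal $I$ whose ``valuative closure'' strictly contains it, but principality of $M$ means there is no ideal strictly between $I$ and its smallest principal over-ideal unless $I$ itself is that principal ideal; so $I = I^v$. The main obstacle is getting this last argument airtight --- distinguishing the case $I$ principal (trivially divisorial) from $I$ non-principal, and in the latter case showing that the infimum of the principal fractional ideals containing $I$ is attained as an intersection equal to $I$, which is exactly where principality of $M$ is used. Everything else is bookkeeping on top of Theorem~\ref{Theorem H} and the already-quoted structure of star operations on valuation domains.
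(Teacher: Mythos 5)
Your parts (2a) and (2b) are essentially sound. For (2a) you argue exactly as the paper does (everything is read off from Theorem~\ref{Theorem H}), except that your illustration of torsion, ``take $G=\mathbb{Z}$'', does not work: a valuation domain with value group $\mathbb{Z}$ is a DVR, so Theorem~\ref{Theorem H} gives $\Cl^{v}(D)=0$ there, not $\mathbb{R}/\mathbb{Z}$. You need a dense proper subgroup of $\mathbb{R}$, e.g.\ $G=\mathbb{Z}[1/2]$, for which the class of $1/3$ is a nontrivial torsion element of $\mathbb{R}/G$. For (2b) the paper simply cites Gilmer; your direct argument is the right one and can be made airtight by observing that $xD/mxD\cong D/M$ is a simple $D$--module, so any $I\subsetneq xD$ satisfies $I\subseteq mxD$; then $z\in I^{v}\setminus I$ would give $I\subseteq zD$, hence $I\subseteq mzD$, hence $z\in I^{v}\subseteq mzD$, forcing $m$ to be a unit.

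Part (1), however, has a genuine gap. The pivotal claim ``since $I$ is $v$--invertible it is $v$--finite'' is false: $v$--invertibility does not imply $v$--finiteness (only $\star_{_{\!f}}$--invertibility implies $\star_{_{\!f}}$--finiteness, as the paper recalls). For instance, in a rank-one nondiscrete valuation domain $V$ with value group $\mathbb{Q}$, every nonzero fractional ideal is $v$--invertible because $V$ is completely integrally closed, yet a nonprincipal $v$--ideal of $V$ cannot equal $F^{v}$ for any $F\in\f(V)$, since every such $F$ is principal. Moreover, even where $v$--finiteness holds, your last step fails: from $F^{v}=I^{v}$ and $G^{v}=I^{-1}$ you do get $(FG)^{v}=D$, but $FG$ need not be contained in $II^{-1}$, so you cannot conclude $(II^{-1})^{t}=D$; for that you need $F\subseteq I$ and $G\subseteq I^{-1}$, i.e.\ $t$--finiteness, not $v$--finiteness. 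The same $V$ shows that the hypothesis as literally printed (finite type of $F^{-1}$ for $F\in\f(D)$ only) cannot suffice: it is vacuously satisfied by every valuation domain (all finitely generated fractional ideals being principal), yet $\Cl^{t}(V)=0\neq\mathbb{R}/\mathbb{Q}=\Cl^{v}(V)$ by Theorem~\ref{Theorem H}. The hypothesis must be read for all $F\in\F(D)$, and with that reading the argument is the paper's one-liner via Proposition~\ref{qsi}(1): if $I$ is a $v$--invertible $v$--ideal, then $I^{-1}=(D:I)$ and $I=I^{v}=(D:I^{-1})$ are both inverses of nonzero fractional ideals, hence both of finite type (i.e., $t$--finite) by hypothesis, and Proposition~\ref{qsi}(1) then yields that $I$ is $t$--invertible, giving $\Inv^{v}(D)=\Inv^{t}(D)$ and therefore $\Cl^{v}(D)=\Cl^{t}(D)$.
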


\begin{proof}
The proof of (1) is straightforward     since  by Proposition \ref{qsi} we
deduce that $I \in \F(D)$ is $t$--invertible if and only if $I$
and $I^{-1}$ are $t$--finite and $I$ is $v$--invertible.   The
main examples of domains for which this result holds are  Mori
domains, which include Noetherian and Krull domains as special
cases.

\rm For the proof  of  (2a)  consult Theorem \ref{Theorem H}.
The other statements are easy consequences of the first one since  $\mathbb{R}$  is an additive
divisible group and any quotient group of a divisible group is again
divisible.  It is easy to check that  $\mathbb{R}/\mathbb{Q}$  is
torsion-free. 

\rm (2b) is well known \cite[Example 12, page 431]{[G]}.    
\end{proof}

 If $D$ is an $n$-dimensional valuation domain, $1 \leq n\leq
\infty $,   with maximal ideal $M$ and $M$ is not principal, we get a completely different story.  
 We essentially devote the major part of the
remainder of this section to this case.  
To give a clear idea  of this situation,  we start with an
example.

%EXAMPLE 3.2
\begin{example}
\label{Example M}  \sl Let $V$ be a nondiscrete rank-one valuation
domain with value group $\mathit{G}$, let $K$ be the quotient
field of $V$,   let $X$  be an indeterminate over $K$,  and
let $D  := V+XK[\![X]\!]$. Then  $D$ is a two-dimensional
valuation domain (with quotient field  $K(\!(X)\!)$); \ thus $ 0 = \Pic(D) = \Cl^t(D)$,
 and moreover,   $\Cl^{v}(D)=\mathbb{R}/\mathit{G}$.

\rm

 The fact that $D$ is a two-dimensional valuation domain
follows from  \cite[Proposition 18.2(3)]{[G]}.   By using a very general theory of the class groups on pullback constructions \cite{[FP0]}, we have $\Cl^{v}(D)\cong \Cl^v(V)$  (see also the following Theorem \ref{cl v/p}). As a matter of fact \cite[Corollary
2.7]{[FP0]}  ensures that, given a quasilocal integral domain $(T, M, k)$ and a proper subring $S$ of $k$, if $R$ is the integral domain arising from the following pullback of canonical homomorphisms
$$
\begin{CD}
R  @> >> S\\
@V  VV @ V VV\\
T@>   >> T/M = k\,,
\end{CD}
$$
then $\Cl^v(S) \cong \Cl^v(R)$.  The conclusion  then 
follows from Theorem \ref{Theorem H}(2).
\end{example}

Since the case of valuation domains is  rather   peculiar and relevant, it deserves  particular attention. In this case, in fact,  it is possible    to give  direct  proofs of special cases of more general results on $\ast$--class  groups concerning pullback constructions  (cf., in particular,  \cite{FG} and \cite{[FP0]})
by using  elementary direct methods that are  elegant and   simple   to handle.   The next goal is to show how,  in the previous Example \ref{Example M},  it is possible to avoid the use of \cite[Corollary
2.7]{[FP0]}.  
 
 \smallskip

Recall   that for  each fractional ideal $I$ of an integral domain $D$ with quotient field $K$,  we have  
$I^{v}=\bigcap \{zD \mid z \in K \mbox{ and } I\subseteq zD\}$.     Moreover, as observed in  \cite[page
432]{[Z3]},  $I^{v}\neq D$ if and only if there are $0 \neq a, b\in D$ such
that $I\subseteq (aD: bD)$ and $a \nmid b$.   Also,   recall from \cite{[DFA]} that an
integral domain $D$ is  an \it  \texttt{IP}--domain \rm if every proper integral $v$--ideal of $D$
is an intersection of integral principal ideals of $D$. 

Now let $I$ be a
proper integral ideal of a valuation domain $V$ with   quotient
field  $K$.   Then $I^{v}=$ $\bigcap \{zV \mid z \in K \mbox{ and } I\subseteq zV\}$. 
 Since $V$ is a valuation domain and $I$ an integral ideal of   $V$, we
have $I^{v}\subseteq V$,   and so
 $$
 \begin{array}{rl}
 I^ {v} =& \left(\bigcap \{zV \mid z \in V \mbox{ and } I\subseteq zV\}\right)\cap 
 \left( \bigcap \{zV \mid z\in K\setminus V \mbox{ and } I\subseteq zV\}\right) \\
 =& \bigcap \{zV \mid z \in V \mbox{ and } I\subseteq zV\}
 \end{array}
 $$
(note  that  for $z \in K$, $z\in K\setminus V$ is equivalent to $z^{-1}\in V$, and hence  $ \bigcap \{zV \mid z\in K\setminus V \mbox{ and } I\subseteq zV\} =V
$).
 So a valuation
domain $V$ is an \texttt{IP}--domain.  Indeed,   if $I$ is nonzero principal,  then $I=xV=I^{v}$. On the
other hand,   if $I$ is not principal,   then $I$ is not finitely generated. This 
leads to two cases: 
$$ \mbox{(a) $I^{v}=V$ \; \; or \; \; (b) $I^{v}=\bigcap \{zV \mid z \in V \mbox{ and } I\subseteq zV\subsetneq V\}$}.
$$
We now prepare to use the fact that if $V$ is a
valuation domain and $P$ is a nonmaximal prime ideal of   $V$,  then    $V/P$ is
a valuation domain that is not a field.

% LEMMA 3.3
\begin{lemma} \label{v-quot} Let $V$ be a valuation domain with maximal ideal
$ M$, $P \subsetneq M$  a
prime ideal of  $V$, and $I $  an integral ideal of  $V$  with  $P \subsetneq I$.
Then $I^{v}/P=(I/P)^{v}$.    In particular,    $I/P$ is a $v$--ideal of $V/P$ if and
only if $I$ is a $v$--ideal of $V$.
\end{lemma}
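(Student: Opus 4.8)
The plan is to express both $v$--closures as intersections of principal integral ideals — using the description of the $v$--operation on a valuation domain obtained just above — and then to identify the two resulting families of principal ideals by means of the ideal correspondence for the canonical surjection $V\twoheadrightarrow V/P$. The case $I=V$ is trivial (both sides equal $V/P$), so from now on I assume $I\subsetneq V$; then $\overline{V}:=V/P$ is a valuation domain, by the fact recalled immediately before the statement, and $\overline{I}:=I/P$ is a nonzero \emph{proper} integral ideal of $\overline{V}$.

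Applying the formula ``$J^{v}=\bigcap\{zV\mid z\in V,\ J\subseteq zV\}$ for a proper integral ideal $J$ of a valuation domain'' both to $V$ and to $\overline{V}$, I would obtain
\begin{equation*}
I^{v}=\bigcap\{zV\mid z\in V,\ I\subseteq zV\}\qquad\text{and}\qquad \overline{I}^{\,v}=\bigcap\{\,\overline{z}\,\overline{V}\mid \overline{z}\in \overline{V},\ \overline{I}\subseteq \overline{z}\,\overline{V}\,\}.
\end{equation*}
Set $\mathcal{F}:=\{zV\mid z\in V,\ I\subseteq zV\}$ and $\mathcal{G}:=\{\overline{z}\,\overline{V}\mid \overline{z}\in \overline{V},\ \overline{I}\subseteq \overline{z}\,\overline{V}\}$. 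The core step is to show that $zV\mapsto zV/P$ is a bijection from $\mathcal{F}$ onto $\mathcal{G}$. One inclusion is immediate: if $zV\in\mathcal{F}$, then $P\subsetneq I\subseteq zV$, so $zV\supseteq P$, whence $zV/P=\overline{z}\,\overline{V}$ and $\overline{I}\subseteq zV/P$, i.e. $zV/P\in\mathcal{G}$. For the other direction, take $\overline{z}\in\overline{V}$ with $\overline{I}\subseteq \overline{z}\,\overline{V}$; since $\overline{I}\neq 0$ we have $\overline{z}\neq 0$, so any lift $z\in V$ of $\overline{z}$ lies outside $P$. Because $V$ is a valuation domain, its ideals $P$ and $zV$ are comparable, and $z\notin P$ forces $P\subseteq zV$; hence again $zV/P=\overline{z}\,\overline{V}$, and $\overline{I}\subseteq zV/P$ together with $P\subseteq I$ and $P\subseteq zV$ upgrades to $I\subseteq zV$, so $zV\in\mathcal{F}$ maps onto $\overline{z}\,\overline{V}$. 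Since every member of $\mathcal{F}$ contains $P$, the ideal correspondence commutes with the intersections at hand, so $\overline{I}^{\,v}=\bigl(\bigcap\mathcal{F}\bigr)\big/P=I^{v}/P$, which is the first assertion. The ``in particular'' clause then follows at once: $I^{v}\supseteq I\supseteq P$, hence $I^{v}=I\iff I^{v}/P=I/P\iff \overline{I}^{\,v}=\overline{I}$, i.e. $I$ is a $v$--ideal of $V$ if and only if $I/P$ is a $v$--ideal of $V/P$.

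The one point I expect to require care is the degenerate possibility that a lift $z$ of $\overline{z}$ falls into $P$: this is exactly excluded by $\overline{I}\neq 0$ and by the valuation hypothesis, which makes the ideals of $V$ a chain, so that $z\notin P$ immediately yields $P\subseteq zV$ — and it is precisely this containment that guarantees $zV/P=\overline{z}\,\overline{V}$ rather than merely $(zV+P)/P$, allowing the two families $\mathcal{F}$ and $\mathcal{G}$ to be matched on the nose. Everything else reduces to the correspondence theorem for ideals containing $P$ and to the elementary description of the $v$--operation on valuation domains already recorded above.
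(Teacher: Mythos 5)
Your proof is correct and follows essentially the same route as the paper's: write $I^{v}$ (and $(I/P)^{v}$) as an intersection of principal integral ideals, observe that every such $zV\supseteq I$ automatically contains $P$, and pass through the ideal correspondence for $V\twoheadrightarrow V/P$, which commutes with the intersection. Your version is slightly more careful than the paper's about the surjectivity of $zV\mapsto zV/P$ and about the degenerate cases $I=V$ and $I^{v}=V$, but the underlying argument is identical.
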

\begin{proof}
 For $z\in V$,   we have $I\subseteq zV$ if and only
if $I/P\subseteq (zV)/P=(zV+P)/P$.    As\-su\-me  that $P \subsetneq I$
$\subseteq I^v \subsetneq V$.
 As above, 
 $I^{v}/P= \left( \bigcap \{zV \mid z \in V \mbox{ and } I\subseteq zV\subseteq V\}\right)/P $ = $
  \bigcap \{(zV)/P \mid z \in V \mbox{ and } I\subseteq zV\subseteq V\} $  =  $
  (I/P)^v$.
\end{proof}

To prove the main theorem of this  section,  we need to prove yet another lemma. 

% LEMMA 3.4
\begin{lemma}  \label{v-inv-quot} Let $V$ be a valuation domain with maximal ideal $M$, $P \subsetneq M$  a
prime ideal of  $V$,  and $I$ an integral ideal of $V$ with $P\subsetneq I$.
Then $I/P$ is a $v$--invertible $v$--ideal of $V/P$ if and only if $I$ is a $v$--invertible $v$--ideal of $V$.
\end{lemma}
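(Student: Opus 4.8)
The plan is to reduce the statement to a single elementary description of $v$-invertibility inside a valuation domain and then to transport that description across the canonical surjection $V\to V/P$. First, by Lemma~\ref{v-quot} the conditions ``$I$ is a $v$-ideal of $V$'' and ``$I/P$ is a $v$-ideal of $V/P$'' are equivalent, so if $I\neq I^{v}$ both sides of the asserted equivalence fail and there is nothing to prove; thus I may assume $I=I^{v}$, and then $I/P=(I/P)^{v}$ by Lemma~\ref{v-quot}. The case $I=V$ is trivial. If $I$ is a nonzero proper principal ideal, say $I=xV$, then necessarily $x\notin P$ and, since $V$ is a valuation domain, $P\subseteq xV$; reducing modulo $P$ gives $I/P=\bar x(V/P)$, a principal ideal with $\bar x\neq 0$, and conversely if $I/P=\bar a(V/P)$ with $a\in V\setminus P$ then $I=aV+P=aV$. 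As a principal ideal is always a $v$-invertible $v$-ideal, both sides hold in this case. Hence it suffices to treat the case in which $I$ is a non-principal proper $v$-ideal of $V$ (equivalently, of $V/P$).

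Next I would establish the following fact, valid in any valuation domain $V$: a proper $v$-ideal $J$ of $V$ is $v$-invertible if and only if $(J:J)=V$. For the ``if'' direction, suppose $(JJ^{-1})^{v}\neq V$; since $(JJ^{-1})^{v}=\bigcap\{zV\mid z\in K,\ JJ^{-1}\subseteq zV\}$, there is a nonunit $z$ of $V$ with $JJ^{-1}\subseteq zV$, hence $z^{-1}J^{-1}\subseteq (V:J)=J^{-1}$; applying $(V:-)$ gives $J=(V:J^{-1})\subseteq (V:z^{-1}J^{-1})=z(V:J^{-1})=zJ$, so $z^{-1}\in (J:J)=V$, contradicting that $z$ is a nonunit. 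For the ``only if'' direction, let $y\in (J:J)\setminus V$; since $V$ is a valuation domain, $x:=y^{-1}$ is a nonunit of $V$, and applying $(V:-)$ to $yJ\subseteq J$ gives $yJ^{-1}\subseteq J^{-1}$, whence $y(JJ^{-1})=J(yJ^{-1})\subseteq JJ^{-1}\subseteq V$; therefore every $t\in JJ^{-1}$ satisfies $t=x(yt)\in xV$, so $(JJ^{-1})^{v}\subseteq (xV)^{v}=xV\subsetneq V$ and $J$ is not $v$-invertible.

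Applying this fact both to $V$ and to the valuation domain $V/P$, the lemma comes down to proving that $(I:I)=V$ if and only if $(I/P:I/P)=V/P$. For one implication, suppose $y\in (I:I)\setminus V$ and put $x:=y^{-1}$, a nonunit of $V$; writing the overring $(I:I)$ of $V$ as $V_{Q}$ for a prime $Q$, the inclusions $P\subsetneq I\subseteq V\subsetneq V_{Q}$ force $Q\supseteq P$ (otherwise $I$ would contain an element of $P\setminus Q$, hence a unit of $V_{Q}$, hence $I=V_{Q}$), and since $x^{-1}=y\in V_{Q}$ and the maximal ideal of $V_{Q}$ is $Q$ we get $x\notin Q$, so $x\in M\setminus P$; reducing $x^{-1}I\subseteq I$ modulo $P$ then yields $\bar x^{-1}(I/P)\subseteq I/P$ with $\bar x^{-1}\notin V/P$. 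Conversely, if $\xi\in (I/P:I/P)\setminus(V/P)$, then since $V/P$ is a valuation domain $\xi^{-1}$ is a nonzero nonunit of $V/P$, say $\xi^{-1}=\bar x$ with $x\in M\setminus P$, so $\xi=\bar x^{-1}$ and $\xi(I/P)\subseteq I/P$ unwinds to $I\subseteq xI+P$; using that $x^{-1}P\subseteq P$ (because $x\notin P$ and $V$ is a valuation domain, i.e. $PV_{P}=P$) together with $P\subseteq I$, we get $x^{-1}I\subseteq I+x^{-1}P\subseteq I$, hence $x^{-1}\in (I:I)\setminus V$. Combining these with the reduction of the first paragraph and the criterion of the second finishes the argument.

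The step I expect to be the main obstacle is the last one: one must keep careful track of the fact that the relevant element ($x=y^{-1}$, or the lift of $\xi^{-1}$) lands in $M\setminus P$ rather than in $P$, and that reduction modulo $P$ interacts correctly with the fractional ideals in play. This is exactly where the valuation-theoretic facts ``$PV_{P}=P$'', ``the maximal ideal of $V_{Q}$ is $Q$'', and ``$Q\supseteq P$ for the defining prime $Q$ of the overring $(I:I)$'' do the real work, so it will be worth recording them cleanly before running the main argument.
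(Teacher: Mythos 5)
Your argument is correct, but it takes a genuinely different route from the paper's. The paper first pins down $II^{-1}$ for a $v$--invertible $I$: if $I$ is not finitely generated, $II^{-1}$ is a prime ideal (a cited result of D.D.\ Anderson), which must equal $M$ because nonmaximal primes of a valuation domain are divisorial; it then builds an explicit companion ideal $J:=jI^{-1}$ with $P\subsetneq J$ and $(IJ)^{v}=jV$, and transports that single equation through the quotient via Lemma~\ref{v-quot} (this explicit witness with $P\subsetneq J$ is reused later in the proof of Theorem~\ref{cl v/p}, so it earns its keep). You instead prove and use the self-contained criterion that a proper $v$--ideal $J$ of a valuation domain is $v$--invertible if and only if $(J:J)=V$, and then show that the condition $(I:I)=V$ passes back and forth across $V\to V/P$; your verification of the criterion (via $(V:-)$ applied to $z^{-1}J^{-1}\subseteq J^{-1}$ and to $yJ\subseteq J$) and of the transfer (via $(I:I)=V_{Q}$ with $Q\supseteq P$, and via $x^{-1}P\subseteq P$ for $x\notin P$) are both sound, and your preliminary reduction to proper non-principal $v$--ideals using Lemma~\ref{v-quot} is complete. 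What your approach buys is independence from the external fact that $II^{-1}$ is prime for non-finitely-generated $I$ and from the divisoriality of nonmaximal primes; what it costs is that it does not produce the explicit ideal $J\supsetneq P$ with $(IJ)^{v}$ principal, which the paper wants anyway for the class-group isomorphism that follows.
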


\begin{proof}  Let $I$ be a $v$--invertible ideal of $V.$ Since  $(II^{-1})^{v}=V$,   we
claim that $M\subseteq II^{-1}\subseteq V$.   Of these,   $II^{-1}\subseteq V$
always  holds;  so we concentrate on $M\subseteq II^{-1}$.    If $I$ is principal,
then $II^{-1}=V$,    and so trivially  $M\subseteq II^{-1}$. If on the other hand $I$ is
not finitely generated, then $II^{-1}=Q$   is   a prime ideal of $V$  \cite[Theorem 1]{[DDA]}. If $Q$
were nonmaximal,    then $Q$ must be divisorial (with $Q =(V: V_Q)$),    and so $V=(II^{-1})^{v}=Q 
\subsetneq V$, a contradiction. Thus $M=II^{-1}$, and in this case too,    $ 
M\subseteq II^{-1}$. 

 Next, since $P\subsetneq I$,   there exists an element  $
j\in I\backslash P.$ Let $J:=jI^{-1}$;  clearly    $J\subseteq V$. We claim that $
P\subsetneq J$.   For if not,  then  as we are working in a valuation domain $V$, we must have $
J=jI^{-1}\subseteq P$.  Multiplying both sides by $I$ and applying the  $v$--operation,   we get $jV = j (II^{-1})^{v}= (jII^{-1})^{v}\subseteq P$, because $P$ is a $v$-ideal
(being nonmaximal). This gives $j\in P$ a contradiction. 
Hence $P\subsetneq
J $. 

Now $(IJ)^{v}=jV$, and $I, J$, and $jV$ all properly contain $P$.   So    by
Lemma \ref{v-quot}, $(jV+P)/P= (jV)/P=(IJ)^{v}/P=(IJ/P)^{v}=((I/P)(J/P))^{v}$,  and this
establishes that $I/P$ is $v$-invertible in $V/P$.  Moreover, with an appeal
again to Lemma \ref{v-quot}, we conclude that if $I$ is a $v$--invertible $v$--ideal of $V$,  
then $I/P$ is a $v$--invertible $v$--ideal of $V/P$. 

Conversely,  if $I/P$ is a $
v$--invertible $v$--ideal of $V/P$,   then for some ideal $J$ of $V$ with $
P\subsetneq J$ we have   $((I/P)(J/P))^{v}= (xV)/P$,  where $x\in V\backslash P$.
From this fact,    it is easy to deduce that   $(IJ)^{v}=xV$, and so,   $I$ is a $v$--invertible $v$--ideal of $V$.
\end{proof}

% THEOREM 3.5
\begin{theorem} \label{cl v/p} 
Let $V$ be a valuation domain with maximal ideal $M$ and proper 
quotient field $K$, and let $P\subsetneq M$ be a prime ideal of $V.$
Then $\Cl^{v}(V)\cong \Cl^{v}(V/P)$.
\end{theorem}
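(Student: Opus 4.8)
The plan is to build the isomorphism by hand, using the inclusion-preserving bijection $I\leftrightarrow I/P$ between the ideals of $V$ containing $P$ and the ideals of $V/P$, together with Lemmas~\ref{v-quot} and~\ref{v-inv-quot}. If $P=(0)$ there is nothing to prove, so assume $P\ne(0)$. Recall that $V/P$ is again a valuation domain, with quotient field $V_P/PV_P$, that it is not a field since $P\subsetneq M$, and that the nonzero $V$-submodules of $K$ form a chain under inclusion.

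The crucial reduction is the normalisation claim: \emph{every class of $\Cl^v(V)$ has a representative $I$ which is a $v$-invertible $v$-ideal with $P\subsetneq I\subseteq V$} (while every class of $\Cl^v(V/P)$ trivially has an integral representative). To get it, take an arbitrary $v$-invertible $v$-ideal $I$; since $(II^{-1})^v=V$ and $M\not\subseteq P$, extending to the flat overring $V_P$ yields $(IV_P)(I^{-1}V_P)=V_P$, so $IV_P$ is invertible, hence principal, in the valuation domain $V_P$. Multiplying $I$ by a generator we may assume $IV_P=V_P$; then $I\not\subseteq PV_P$, so $I\not\subseteq P$, so $P\subsetneq I$ by the chain condition. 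A further multiplication of $I$ by a suitably chosen element of $P$ then pulls $I$ down inside $V$ while keeping $P\subsetneq I$; this last adjustment --- which is where $P\ne(0)$ is needed, so that there is enough room in the value group of $V/P$ --- is the step I expect to be the main obstacle, the rest being routine.

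Granting the normalisation, define $\Phi\colon\Cl^v(V)\to\Cl^v(V/P)$ by $\Phi([I]):=[I/P]$ for a normalised representative $I$; note $I/P$ is a $v$-invertible $v$-ideal of $V/P$ by Lemma~\ref{v-inv-quot}. This is well defined: if $P\subsetneq I\subseteq V$, then $I$ contains an element of $V\setminus P$, i.e.\ a unit of $V_P$, so $IV_P=V_P$; hence a second normalised representative $I'=xI$ of the same class forces $x$ to be a unit of $V_P$, so $\bar x$ lies in $(V_P/PV_P)^\bullet$ and $I'/P=\bar x\,(I/P)$, whence $[I'/P]=[I/P]$. And $\Phi$ is a homomorphism: a product of normalised representatives is again normalised and, by Lemma~\ref{v-quot}, $((IJ)^v)/P=((I/P)(J/P))^v$.

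Surjectivity: given a class of $\Cl^v(V/P)$, choose an integral representative $\bar I$ and let $I$ be the ideal of $V$ with $P\subseteq I\subseteq V$ corresponding to it; since $\bar I\ne(0)$ we have $P\subsetneq I$, and Lemma~\ref{v-inv-quot} shows $I$ is a $v$-invertible $v$-ideal with $\Phi([I])=[\bar I]$. Injectivity: if $\Phi([I])=0$ for a normalised $I$, write $I/P=\bigl((b+P)/(c+P)\bigr)(V/P)$ with $b,c\in V$, $c\notin P$; then $cI$ and $bV$ are ideals of $V$ both containing $P$ and with the same image modulo $P$, so $cI=bV$ by injectivity of the correspondence, i.e.\ $I=(b/c)V$ is principal and $[I]=0$. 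Hence $\Phi$ is the desired isomorphism.
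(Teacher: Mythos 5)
Your overall architecture is the same as the paper's: normalise each class to an integral $v$--invertible $v$--ideal properly containing $P$, then transport it through the inclusion--preserving correspondence $I\leftrightarrow I/P$ using Lemmas~\ref{v-quot} and~\ref{v-inv-quot}. Your well--definedness, injectivity, surjectivity and homomorphism arguments are all correct and essentially match the paper's. The problem is the normalisation step itself, which you rightly flagged as the main obstacle: the mechanism you propose does not work. After arranging $IV_P=V_P$ you have $P\subsetneq I\subseteq V_P$, and you then want to multiply by ``a suitably chosen element of $P$'' to force $I\subseteq V$ while keeping $P\subsetneq I$. But for any $p\in P$ one has $pI\subseteq pV_P\subseteq PV_P=P$, so multiplication by an element of $P$ lands you \emph{inside} $P$ and destroys exactly the condition you need. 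Nor can you always repair this with a unit of $V_P$ instead: for a general fractional ideal $I$ with $IV_P=V_P$, the set of values needed to clear the denominators of $I$ may be cofinal in the value group of $V/P$, so no single element of $V\setminus P$ suffices. What saves the day is $v$--invertibility, and you never bring it to bear on this step.

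The paper's route, which you should adopt, extracts the normalised representative directly from the identity $(II^{-1})^v=V$. The proof of Lemma~\ref{v-inv-quot} shows that for a $v$--invertible $v$--ideal $I$ one has $M\subseteq II^{-1}$ (if $I$ is not principal then $II^{-1}$ is a prime $Q$ by \cite[Theorem 1]{[DDA]}, and a nonmaximal $Q$ would be divisorial, contradicting $(II^{-1})^v=V$; hence $Q=M$). Since $P\subsetneq M\subseteq II^{-1}=\sum_{j\in I^{-1}}jI$ and $P$ is prime (so a sum lying outside $P$ has a summand outside $P$), there is some $j\in I^{-1}$ with $jI\not\subseteq P$; then $J:=jI$ satisfies $P\subsetneq J\subseteq II^{-1}\subseteq V$ by the chain property of $V$--submodules of $K$. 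This produces the normalised representative in one stroke, with no passage through $V_P$. With that replacement, the rest of your argument goes through.
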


\begin{proof}  We first show that if there is a $v$--invertible $v$--ideal $I$ of  $V$, then    its class 
$[I]$ contains an integral ideal $J$ that properly contains $P$.  Let $\boldsymbol{\mathfrak{I}} \in
\Cl^{v}(V)$.   Then $\boldsymbol{\mathfrak{I}}=[I]$ for some $v$--invertible $v$--ideal $I$ of $V$.  Since 
$(II^{-1})^{v}=V$,  as in the proof of Lemma \ref{v-inv-quot},  we have $P\subsetneq II^{-1}$. 
Thus $ P \subsetneq jI$  for some $ j
\in I^{-1}$. 
% [This should be clear.  Otherwise all$ jI \subseteq P$; so  $II^{-1}
%\subseteq P$.]
%
%\smallskip
%So there exists \rc an element \ec  $
%y\in II^{-1}$ such that $P\subsetneq yV$.  But since $V$ is a valuation
%domain $y=ij$,  where $i\in I$ and $j\in I^{-1}$ (see e.g. \cite{[AAD]}). Now $
%P\subsetneq yV\subseteq jI$.
%\smallskip
%
 Let $J:=jI$. Define $\varphi (\boldsymbol{\mathfrak{I}}): =[J/P]$.  Note
that $[J/P]\in \Cl^{v}(J/P)$ by Lemmas \ref{v-quot} and \ref{v-inv-quot}.  

Since  $[J] = [I] $ in  $\Cl^v(V)$, it is
enough to study 
%
%If $I$ is an integral
%ideal properly \rc containing $P$,  we can take $j\in I^{-1}$ to be equal to a unit of $D$; \ec 
%so it is
%enough to study 
%
the case of integral $v$--invertible $v$--ideals $I\supsetneq
P,$ in which case $\varphi ([I])=[I/P].$ We first show that $\varphi $ is
well-defined. Let $A$ and $B$ be two $v$--invertible $v$--ideals of $V$ such
that $P\subsetneq A,B\subseteq V$ and $[A]=[B].$ Then $A=tB$ for some $0\neq
t\in K.$ Since $t\in V$ or  $t^{-1}\in V$, we  can assume that $t\in V$ (interchanging eventually $A$ with $B$).  Once
we assume that $t\in V$,  we find that $t\in V\backslash P$ because 
$P\subsetneq A.$ Thus $A/P=(tB)/P=((t+P)/P)(B/P)$ in $V/P$. So, $[A/P]=[B/P]$ in $
\Cl^{v}(V/P)$.  Thus $\varphi $ is well-defined. Next we show that $\varphi $
is injective. Suppose that $[A/P]=[B/P]$ in  $\Cl^{v}(V/P)$,  where $P\subsetneq
A,B\subseteq V$ are $v$--invertible $v$--ideals of $V$. Then as before we can
assume that $A/P=((t+P)/P)(B/P)$ for some $t\in V\backslash P$.  Thus $A/P=(tB)/P$,
which forces $A=tB$,   and hence $[A]=[B]$ in $\Cl^{v}(V)$. To show that $\varphi $ is surjective, let  $\boldsymbol{\mathfrak{J}}\in \Cl^{v}(V/P)$.
Then   $\boldsymbol{\mathfrak{J}}=[J]$ for some $v$--invertible integral $v$--ideal $J$ of $V/P$. By the
above comments and Lemmas  \ref{v-quot} and \ref{v-inv-quot}, $J=I/P$ for some $v$--invertible
integral $v$--ideal $I$ of $V$ such that $P\subsetneq I$. Thus $\varphi
([I])=[I/P]=[J]=\boldsymbol{\mathfrak{J}}$; so $\varphi $ is surjective. Finally, we show that $
\varphi $ is a group-homomorphism. Let  $[I], [I']\in \Cl^{v}(V)$, where  $P\subsetneq
I,I' \subseteq V$ are $v$--invertible $v$--ideals of $V$. Then $\varphi ([I]\cdot
\lbrack I'])=\varphi ([(II')^{v}])=[(II')^{v}/P]=[(II'/P)^{v}]$
 $=[((I/P)(I'/P))^{v}]$ $=[I/P] \cdot\lbrack I'/P]=\varphi ([I])\cdot \varphi
([I'])$. (Here we have used the fact that $P\subsetneq I,I'$ implies that $%
P\subsetneq II'$.) Thus $\varphi $ is an isomorphism. \end{proof}

The next statement,  which  has already appeared in Proposition \ref{cl0},  can be easily reobtained as a consequence of Theorem \ref{cl v/p}.

%COROLLARY 3.6
\begin{corollary} \label{v-princ}
 Let $V$ be a valuation domain with principal maximal ideal $M$. Then $\Cl^{v}(V)=0$.
\end{corollary}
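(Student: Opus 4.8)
The plan is to exhibit a suitable prime ideal $P\subsetneq M$ for which Theorem \ref{cl v/p} reduces the computation of $\Cl^{v}(V)$ to the class group of a DVR, which is visibly zero; this re-derives the statement (already contained in Proposition \ref{cl0}(2b)) from Theorem \ref{cl v/p} alone.

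First I would dispose of the trivial case in which $V$ is a field, where $\Cl^{v}(V)=0$ is immediate. So assume $V$ is not a field and write $M=\pi V$ with $\pi$ a nonzero nonunit. Set $P:=\bigcap_{n\ge 1}\pi^{n}V$. I would check that $P$ is a prime ideal of $V$ with $P\subsetneq M$: the inclusion $P\subseteq M$ is clear, and $P\ne M$ since $\pi\notin\pi^{2}V$ (otherwise $1\in\pi V=M$), so $\pi\notin P$; for primality, if $ab\in P$ and $a\notin P$, say $a\notin\pi^{m}V$, then the ideals of $V$ being totally ordered gives $\pi^{m}V\subsetneq aV$, i.e.\ $\pi^{m}/a\in V$, and hence for every $n$ one has $b/\pi^{n}=(ab/\pi^{m+n})(\pi^{m}/a)\in V$, so $b\in P$.

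Next I would apply Theorem \ref{cl v/p} (legitimate because $V$ has proper quotient field and $P\subsetneq M$ is prime) to obtain $\Cl^{v}(V)\cong\Cl^{v}(V/P)$, and then show that $W:=V/P$ is a DVR. Its maximal ideal $M/P$ is principal, generated by $\bar\pi:=\pi+P$; and since $P\subseteq\pi^{n}V$ for every $n$, one has $\bar\pi^{\,n}W=(\pi^{n}V)/P$, whence $\bigcap_{n}\bar\pi^{\,n}W=\bigl(\bigcap_{n}\pi^{n}V\bigr)/P=P/P=(0)$. Consequently every nonzero $\bar x\in W$ lies in $\bar\pi^{\,n}W\setminus\bar\pi^{\,n+1}W$ for a unique $n\ge 0$, so $\bar x=\bar\pi^{\,n}\bar y$ with $\bar y$ a unit of $W$; thus $W$ is a valuation domain whose nonzero ideals are exactly the $\bar\pi^{\,n}W$, i.e.\ a DVR (cf.\ also Theorem \ref{Theorem H}(1)). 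Since a DVR is a PID, every fractional ideal of $W$ is principal, so $\Inv^{v}(W)=\Prin(W)$ and $\Cl^{v}(W)=0$; combined with the isomorphism above this gives $\Cl^{v}(V)=0$.

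I do not expect a genuine obstacle: the only point requiring attention is the choice of $P$, which must be the largest ``infinitely $\pi$--divisible'' prime below $M$ so that $V/P$ becomes discrete; once $P$ is chosen, the primality verification and the identification of $V/P$ as a DVR are routine valuation-theoretic computations, and the reduction via Theorem \ref{cl v/p} does the rest.
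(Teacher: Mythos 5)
Your proof is correct and follows essentially the same route as the paper: choose $P:=\bigcap_{n\ge 1}\pi^{n}V$, observe that $V/P$ is a DVR, and apply Theorem \ref{cl v/p}. The only difference is that you verify the primality of $P$ and the DVR property of $V/P$ by hand, where the paper cites \cite[Theorems 17.1 and 17.3]{[G]}.
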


\begin{proof}
Let $M=qV$ and set $P:=\bigcap_{n\geq1} q^{n}V$.  Then $P$ is a prime ideal of $V$, $P\subsetneq M$, 
and  there is no prime ideal between $P$ and $M$ \cite[Theorems 17.1 and 17.3]{[G]}. This makes $V/P$ a
discrete rank-one valuation domain, and so $\Cl^{v}(V/P)=0$. But then, by
Theorem \ref{cl v/p}, we have   $\Cl^{v}(V)\cong \Cl^{v}(V/P)=0$. 
\end{proof}

Let $V$ be a valuation domain such that the maximal ideal $M$ of $V$ is 
\textit{idempotent} (i.e., $M^{2}=M$) and \textit{branched} (i.e., has  an $M$
primary ideal different from $M$).  Recall that $M$ is idempotent if and only if $M$ is not finitely generated (i.e., not principal) and that $M$ is branched if and only if
there is a prime ideal $P\subsetneq M$ such that there is no prime ideal
between $P$ and $M$ \cite[Theorem 17.3]{[G]}. Let us call $P$  \textit{the prime
ideal directly below} $M$.  Indeed,  this makes $V/P$ a rank-one valuation
domain. If $M$ is  idempotent,  then it is easy to check that $M^{v}=V$ \cite[Corollary 3.1.3]{fhp}.

%COROLLARY 3.7
\begin{corollary} \label{r/g}
Let $V$ be a valuation domain with maximal ideal $M$ that is
branched and idempotent, let $P$ be the prime ideal directly below $M$, and let
 $\mathit{G}$ be the value group of $V/P$.
Then $\Cl^{v}(V)\cong \mathbb{R}/\mathit{G}$.
\end{corollary}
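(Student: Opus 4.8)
The plan is to reduce the computation to the one-dimensional case already handled in Theorem \ref{Theorem H}, using the isomorphism furnished by Theorem \ref{cl v/p}. First I would apply Theorem \ref{cl v/p} with the prime $P\subsetneq M$ (which exists precisely because $M$ is branched, $P$ being the prime directly below $M$) to obtain $\Cl^{v}(V)\cong \Cl^{v}(V/P)$. Next I would identify the structure of $V/P$: since there is no prime of $V$ strictly between $P$ and $M$, the primes of $V$ containing $P$ are exactly $P$ and $M$, so $\Spec(V/P)=\{(0),\,M/P\}$ and $V/P$ is a one-dimensional valuation domain. Being of rank one, its value group is archimedean, hence order-isomorphic to a subgroup of $\mathbb{R}$ by H\"older's theorem; this subgroup is, by hypothesis, the group $\mathit{G}$. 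Thus Theorem \ref{Theorem H} is applicable to $V/P$.

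Then I would rule out the DVR alternative in Theorem \ref{Theorem H}. The maximal ideal of $V/P$ is $M/P$, and from $M^{2}=M\supseteq P$ one gets $(M/P)^{2}=(M^{2}+P)/P=M/P$, so $M/P$ is a nonzero proper idempotent ideal of $V/P$ and hence is not principal (a principal ideal $qR$ with $q^{2}R=qR$ forces $q$ to be a unit). Therefore $V/P$ is not a DVR, so Theorem \ref{Theorem H}(2) gives $\Cl^{v}(V/P)=\mathbb{R}/\mathit{G}$. Composing this with the isomorphism from the first step yields $\Cl^{v}(V)\cong \mathbb{R}/\mathit{G}$, which is the assertion.

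There is no deep obstacle here; the corollary is essentially a packaging of Theorem \ref{cl v/p} together with Theorem \ref{Theorem H}. The only points requiring care are bookkeeping ones: checking that the two standing hypotheses on $M$ are used correctly, namely that \emph{branched} is exactly what provides the prime $P$ directly below $M$ and makes $V/P$ of rank one, while \emph{idempotent} ($M^{2}=M$, equivalently $M$ not finitely generated) passes to $M/P$ and guarantees $V/P$ is not discrete; and that one must normalize the value group of the rank-one domain $V/P$ so that it sits inside $\mathbb{R}$, as required by the statement of Theorem \ref{Theorem H}.
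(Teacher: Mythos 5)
Your proof is correct and follows essentially the same route as the paper: reduce via Theorem \ref{cl v/p} to $\Cl^{v}(V/P)$, observe that $V/P$ is a nondiscrete rank-one valuation domain with value group $\mathit{G}\subseteq\mathbb{R}$, and invoke Theorem \ref{Theorem H}(2). Your explicit check that $(M/P)^{2}=M/P$ rules out the DVR case is a detail the paper leaves implicit, but it does not change the argument.
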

\begin{proof} By  Theorem  \ref{cl v/p}, we have $\Cl^{v}(V)\cong \Cl^{v}(V/P)$. Since $V/P$ is a
nondiscrete rank-one valuation domain, its  value group $\mathit{G}$  is isomorphic to a subgroup of  $\mathbb R$,   and thus   we have 
$\Cl^{v}(V/P)=\mathbb{R}/\mathit{G}$ (Theorem \ref{Theorem H} (2)). \end{proof}

Note that
Corollaries \ref{v-princ} and \ref{r/g}  let us compute  $\Cl^v(V)$ for any
finite-dimensional valuation domain  $V$.  Theorem  \ref{cl v/p} can also be used to give interesting statements relative
to the $D+M$ construction of Gilmer. 
For instance,
%The following statement is a straightforward consequence of that theorem.
%COROLLARY 3.8
%\begin{corollary} 
let $V'$ be a valuation domain that is expressible as  $K+M'$,  
where $K$ is a field and $M'$ the maximal ideal of $V'$. Also let $V$ be a
valuation domain  with quotient field $K$.   Then $D :=V+M'$
is a valuation domain such that $\Cl^{v}(D)\cong \Cl^{v}(V)$.
%\end{corollary}

The reader may wonder about the nature of the elements of $\Cl^{v}(V)$ for the
valuation domain $V$ of Corollary \ref{r/g}. The clue comes from the proof of
Theorem \ref{cl v/p}  and the following result.

%PROPOSITION 3.9
\begin{proposition}  Let $V$ be a valuation domain with quotient field $K$ and  with maximal ideal $M$ that
is idempotent and branched. Then for each nonprincipal $v$--invertible $v$--ideal $I$ of $V$,   there exist two $M\!$--primary ideals $Q$ and $Q_{1}$ of $V$  and two elements $0\neq x, y \in K$such that
$I=xQ$ and $I^{-1}=yQ_{1}$. Therefore \ $\Cl^{v}(V)=\{ [Q] \mid Q \mbox{  is an 
$M\!$--primary nonfinitely
generated $v$--invertible $v$--ideal of $V$}\} $ $ \cup $ $ \{[V]\}$.
\end{proposition}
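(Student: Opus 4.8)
The proposition has two assertions: first, that every nonprincipal $v$-invertible $v$-ideal $I$ of $V$ is of the form $xQ$ with $Q$ an $M$-primary ideal and $x \in K^\bullet$, and that $I^{-1}$ is likewise $yQ_1$ with $Q_1$ $M$-primary; second, that consequently $\Cl^v(V)$ is represented by the classes $[Q]$ with $Q$ an $M$-primary nonfinitely generated $v$-invertible $v$-ideal, together with $[V]$. The second part is a formal consequence of the first: every class $[I]$ is either $[V]$ (the principal case) or, by the first part, equals $[xQ] = [Q]$ for an $M$-primary $Q$; and since $I$ is nonprincipal, so is $Q$ (a principal $M$-primary ideal would make $I$ principal), hence $Q$ is not finitely generated and is a $v$-invertible $v$-ideal because $I$ is. So the work is entirely in the first assertion.

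\textbf{Key steps.} First I would recall the structure already extracted in the proof of Lemma~\ref{v-inv-quot}: since $I$ is a nonprincipal (hence non-finitely-generated) $v$-invertible ideal, $II^{-1}$ is a prime ideal of $V$, and the $v$-invertibility forces $II^{-1} = M$ (if it were a nonmaximal prime $Q$, then $Q$ is divisorial and $V = (II^{-1})^v = Q \subsetneq V$, a contradiction). Next, pick $j \in I^{-1}$ with $jI$ an integral ideal; since $I$ is not principal neither is $jI$, so $jI$ is not finitely generated. The point is to show $jI$ is $M$-\emph{primary}. Its radical is a prime ideal $\mathfrak{p} \subseteq M$; I claim $\mathfrak{p} = M$. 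Indeed, $(jI)(j^{-1}I^{-1}) = II^{-1}$ has $v$-closure $V$, so $(jI)((jI)^{-1})^v \supseteq (II^{-1})^v = V$ via the identification $(jI)^{-1} = j^{-1}I^{-1}$; thus $jI$ is itself $v$-invertible, hence not divisorially contained in any nonmaximal prime (the same divisoriality-of-nonmaximal-primes argument), so $\sqrt{jI}$ cannot be a nonmaximal prime, forcing $\sqrt{jI} = M$, i.e. $jI$ is $M$-primary. Set $Q := jI$ and $x := j^{-1}$, so $I = xQ$. For $I^{-1}$: since $I$ is $v$-invertible and nonprincipal, $I^{-1}$ is also $v$-invertible and nonprincipal (if $I^{-1} = zV$ then $(II^{-1})^v = (zI)^v = V$ would give $I$ principal after the $v$-operation, contradicting non-finite-generation), so the identical argument produces $Q_1$ $M$-primary and $y \in K^\bullet$ with $I^{-1} = yQ_1$.

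\textbf{Expected obstacle.} The delicate point is pinning down that the integral ideal $jI$ (equivalently $Q$) really is $M$-primary rather than merely having radical contained in $M$ — i.e. ruling out that its radical is some nonmaximal prime $\mathfrak{p}$. The clean way around this is to notice $jI$ inherits $v$-invertibility from $I$ and then invoke, verbatim, the dichotomy used already twice in this section: a non-finitely-generated $v$-invertible ideal $A$ of a valuation domain has $AA^{-1}$ equal to a prime ideal, and $v$-invertibility kills the nonmaximal case since nonmaximal primes of a valuation domain are divisorial \cite[Theorem 1]{[DDA]}; therefore $AA^{-1} = M$, and since $\sqrt{A} \supseteq AA^{-1} = M$ one gets $\sqrt{A} = M$. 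All the rest is bookkeeping with the $v$-operation and the fact that in a valuation domain any two ideals are comparable, which makes the passage between $I$ and $I^{-1}$ and between ideal classes entirely routine.
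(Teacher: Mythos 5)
Your reduction of the class--group statement to the decomposition $I=xQ$ is fine, and so is the first step $II^{-1}=M$ (the same dichotomy via \cite[Theorem 1]{[DDA]} and the divisoriality of nonmaximal primes that the paper uses in Lemma \ref{v-inv-quot}). The gap is in the central claim that the integral translate $jI$ is itself $M$--primary. The inclusion you invoke, $\sqrt{A}\supseteq AA^{-1}$, is false: what the divisoriality argument controls is the prime $AA^{-1}$, not the prime $\sqrt{A}$, and these need not coincide, because $AA^{-1}$ contains products $az$ with $z\in A^{-1}\setminus V$ which may have no power in $A$. Concretely, let $V$ have value group $\Gamma=\mathbb{Z}\oplus\mathbb{Q}$ ordered lexicographically (e.g., $V=W+XK[\![X]\!]$ with $W$ a rank-one valuation domain with value group $\mathbb{Q}$ and quotient field $K$), so that $M$ is idempotent and branched with $P:=XK[\![X]\!]$ the prime directly below $M$. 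Fix an irrational $\beta$ and let $A$ be the integral ideal of elements of value $>(1,\beta)$. Since this is a cut of $\Gamma$ with no endpoint on either side, $A$ is a nonprincipal $v$--ideal; one checks that $A^{-1}$ is the set of elements of value $>(-1,-\beta)$, that $AA^{-1}=M$, and hence that $A$ is $v$--invertible because $M^v=V$. Yet $A\subseteq P$, so $\sqrt{A}=P\subsetneq M$ and $A$ is \emph{not} $M$--primary. The proposition still holds for this $A$: taking $x$ of value $(1,c)$ with $c\in\mathbb{Q}$, $c<\beta$, one gets $A=xQ$ with $Q$ $M$--primary --- but the multiplier must be chosen to push the ideal out of $P$, and the particular integral translate you work with (here $A$ itself, i.e., $j=1$) need not work.

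This is precisely where the paper uses the hypothesis that $M$ is branched, by quoting D.D. Anderson's structure theorem \cite[Theorem 2]{[DDA]}: a nonfinitely generated ideal $I$ of a valuation domain with $II^{-1}=M$ and $M$ branched equals $xQ$ for some $M$--primary $Q$. Your argument never uses branchedness, which is itself a warning sign: if $M$ is unbranched, the only $M$--primary ideal is $M$, so the asserted decomposition would give $I=xM$ and hence $I=I^v=xM^v=xV$ principal, whereas the paper's closing remarks (citing \cite[Example 1]{AA84}) exhibit an unbranched $M$ admitting a nonprincipal $v$--ideal $I$ with $II^{-1}=M$. So the statement genuinely fails without branchedness and no proof can avoid it. To repair your argument you would either have to reprove the cited structure theorem (using an $M$--primary ideal properly contained in $M$, or the principality of $IV_P$, to select the right multiplier) or simply invoke it as the paper does.
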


\begin{proof} As in the proof of Theorem \ref{cl v/p}, if $I$ is a nonfinitely generated $v$--invertible integral $v$--ideal of  $V$,  then $II^{-1}=M.$ Since $M$ is
 branched,  we must have $I=xQ$ for some $0\neq x\in V$, where $Q$ is an 
 $M$--primary
ideal of $V$ \cite[Theorem 2]{[DDA]}. So $Q$ is a nonfinitely generated $v$--invertible $v$--ideal of $V$ 
 and  we have $
Q^{-1}= y^{-1}H$,  where $0\neq y\in V$ and $H$ is obviously a
nonfinitely generated $v$--invertible integral $v$--ideal of $V$. Another appeal to
\cite[Theorem 2]{[DDA]} gives that $H= zQ_{1}$, where $Q_{1}$ is $M$--primary and $
0\neq z\in V$. Thus $[I]=[Q]$ and $[I^{-1}]=[x^{-1}Q^{-1}]=[{(xy)}^{-1}H]=[z(xy)^{-1}Q_{1}]=[Q_{1}]$, as claimed in the
statement. The conclusion is now obvious once we note  that for   every
nonzero finitely generated (or, equivalently, principal) fractional ideal $I$ of $V$,  we have $[I]=[V]$, the identity element of $\Cl^{v}(V)$. \end{proof}

\smallskip

In order to extend   Example \ref{Example M},  we can start with the
value group  $\mathit{G'}$  of a valuation domain   $V'$ of any
dimension $ 1  \leq n\leq \infty $. If $\mathit{G}$ is a totally
ordered additive subgroup of $\mathbb R$,   we can construct the
lexicographic direct sum $ \Gamma :=\mathit{G'} \oplus \mathit{G}$.
The resulting group $\Gamma $ is a totally ordered abelian group upon
which we can construct,  using the
Krull-Kaplansky-Jaffard-Heinzer-Ohm Theorem (see for instance \cite[Corollary 18.5]{[G]} or \cite{mott}),   a valuation domain $D$ with a branched
maximal ideal $M$ and a prime ideal $P$ 
directly  below $M$
such that $D/P$ is a rank-one valuation domain with value group
$\mathit{G}$ \cite[page 223, Problem 20]{[G]} and $D_P$ is a valuation domain with value group
$\mathit{G'}$ \cite[proof of
Proposition (19.11)(3)]{[G]}.
% \bf  [PLEASE ADD A MORE SPECIFIC REFERENCE:  MOSCKOR BOOK ? BRANDAL 1976? SOME %SPECIFIC EXERCISE IN GILMER'S BOOK?]. \rm 
 An explicit example of this type is the following.

%EXAMPLE 3.10
\begin{example}  \label{Example N}\sl
Let $V$ be a rank-one valuation domain with value group $\mathit{G}$ 
and let $K$ be the quotient field of\  $V$. Let $V'$ be an
$n$--dimensional valuation domain, $1\leq n\leq \infty $,  with
value group $\mathit{G'}$ such that the residue field of \ $V'$ is
$K$ (note that this is possible by \cite[Corollary 18.5]{[G]}),
and let $\pi': V' \rightarrow K$ be the canonical projection.
Then the pullback $D:= \pi'^{-1}(V)$ is a   valuation domain   such that
 $D/P \cong V$ and $D_P= V'$, where $P:= \pi'^{-1}(0)$.  The value
group of $D$ is the lexicographic direct sum $\mathit{G'}\oplus
\mathit{G}$, $\dim(D) = n+1$ (resp., $\infty$) if $n \neq \infty$
(resp., $n = \infty$), and  $\Cl^v(D) = \mathbb  R /\mathit{G}$.
\rm

Arguing more or less as in Example \ref{Example M},     the properties listed  in the above 
statement follow from \cite[Proposition 18.2(3)]{[G]},
\cite[Chapitre 6, \S 10, N. 2, Lemme
2]{Bourbaki},  Theorem \ref{Theorem H}(2), and  Theorem \ref{cl v/p} (or, \cite[Corollary 2.7]{[FP0]}). \ec 
\end{example}

Note that in Example \ref{Example M},  if we set $S:=V\setminus
\{0\}$, then $D_{S}   = K[\![X]\!]$   is PID. So $D$ is an example of
a two-dimensional valuation domain such that $\Cl^{v}(D_{S})=0$,
while $\Cl^{v}(D)\neq 0$.    On the other hand,  we can construct
a valuation domain $D$  such that $\Cl^{v}(D)=0$,  but
$\Cl^{v}(D_{S})\neq 0$ for some ring of fractions $D_S$ of $D$.
For instance, using the techniques of Examples \ref{Example M} and
\ref{Example N}, if we construct a valuation domain $D$ having as
value group the lexicographic direct sum  $\mathbb Q\oplus \mathbb
Z$,  then $D$ is a two-dimensional valuation domain with the
height-one prime ideal  $P$    of $D$ such that   $D/P$   is a DVR (hence
the maximal ideal of $D$ is principal) and   $D_P$   is a rank-one
valuation domain with value group $\mathbb Q$.  Therefore
$\Cl^{v}(D)=0$  since every nonzero ideal of $D$ is divisorial   \cite[Exercise 12, page 431]{[G]},  
but  $\Cl^{v}(D_{P})=\mathbb{R} /\mathbb Q $.

% EXAMPLE 3.11
\begin{example} \label{krull}
\bf (1)  \sl Let  $\mathit{G}$  be any abelian group. Then there is a quasilocal Krull domain $D$
with   $\Cl^v(D) = \mathit{G}$\ ($= \Cl^t(D)$  \cite[Corollary 44.3]{[G]})  and $\Pic(D) = 0$. \rm

  The first statement is due to L.G. Chouinard  \cite[Corollary 2]{Chouinard}.  It is obvious that $\Pic(D) = 0$
  since an invertible ideal in a quasilocal domain is principal \cite[page 72]{[G]}.

\bf (2) \sl Let $D  :=   K[X, Y]$, where $K$ is a field and $X, Y$
are two indeterminates over $K$.  Then there are infinitely many
distinct star operations on $D$   (more precisely, the cardinal
number  is  $2^\alpha$, where $\alpha := \max \{|K|, \aleph_0\}$  \cite[page 1639]{AA2}),  and  $\Cl^\ast(D) = 0$    for all star
operations $\ast$ on $D$. \rm

 Since $D$ is a UFD,
thus  $\Cl^v(D) = 0$ \cite[Corollary 44.5]{[G]}. The conclusion
follows from the fact that $\Cl^\ast(D) \subseteq \Cl^v(D)$  for all   star  operations $\ast$ on $D$.

\end{example}

%REMARK 3.12
\begin{remark} \label{H}  \rm
Recall from \cite[p. 651]{FP} that an integral  domain   $D$ with a semistar
operation $\star$ is an \it  \texttt{H}$(\star)$--domain \rm  if for each nonzero
integral ideal $I$ of $D$ such that $I^\star = D^\star$,  there
exists a nonzero finitely generated ideal $J$  with $J \subseteq I$,  such that $J^\star = D^\star$ (i.e., $I$ is $\star_{_{\!f}}$--finite). 
When $\star =v$, the \texttt{H}$(v)$--domains coincide with the \texttt{H}--domains introduced by Glaz and Vasconcelos \cite[Remark 2.2 (c)]{Glaz/Vasconcelos:1977}.

It is obvious that  every integral  domain    is an  \texttt{H}$(\star_{_{\!f}})$--domain, so the notion of  \texttt{H}$(\star)$--domain  becomes interesting   only when $\star$ is not of finite type.

Clearly a $\star$--Noetherian domain (i.e.,  an integral domain having the ascending chain condition on the quasi--$\star$--ideals \cite[Section 3]{EFP})  is an \texttt{H}$(\star)$--domain \cite[Lemma 3.3]{EFP}, thus we obtain in particular  that Mori domains (or $v$-Noetherian domains; e.g. Noetherian and Krull domains) are \texttt{H}--domains. 
Houston and Zafrullah \cite[Proposition 2.4]{HZ88} proved, more generally,  that each \it \texttt{TV}-domain \rm  (i.e., an integral domain such that the $t$-operation coincides with the $v$--operation)  is an \texttt{H}--domain. Conversely, a general class of \texttt{H}--domains which are not  \texttt{TV}-domain was also given in \cite{HZ88}.  

It was shown in \cite[Proposition 4.2]{[Z3]} that an integral domain is an \texttt{H}--domain if and only if every $v$--invertible ideal is $t$--invertible.  This statement can be easily generalized to the arbitrary star operation setting.  

\sl Let $\ast$ be a star operation
on an integral domain $D$. Then the following conditions are
equivalent:
\begin{enumerate}
\item[(i)] $D$ is an  \texttt{H}$(\ast)$--domain (resp., an   \texttt{H}--domain).

\item[(ii)] Each $\ast_{_{\!f}}$--maximal ideal of $D$ is a
$\ast$--ideal of $D$ (resp., each maximal  $t$--ideal is a $v$--ideal). 

\item[(iii)]  For each $I \in\boldsymbol{{F}}(D)$,\ $I$ is $\ast$--invertible
if and  only if $I$ is $\ast_{\!_f}$--invertible (resp.,  $I$ is $v$--invertible
if and  only if $I$ is $t$--invertible).

\item[(iv)]  $\Cl^{\ast_{_{\!f}}}(D) = \Cl^{\ast}(D)$ (resp.,  $\Cl^{t}(D) = \Cl^{v}(D)$).

\end{enumerate} 
\rm The equivalences (i)$\Leftrightarrow$(ii)$\Leftrightarrow$(iii) follow from \cite[Proposition 11]{FP2}. 
(iii)$\Rightarrow$(iv)  Since a $\ast$--ideal is trivially also   a $\ast_{_{\!f}}$--ideal,  thus   under the assumption (iii),  $\Inv^{\ast}(D) \subseteq \Inv^{\ast_{_{\!f}}}(D)$. Therefore $\Cl^{\ast}(D) \subseteq \Cl^{\ast_{_{\!f}}}(D)$. Since the reverse inclusion holds in general, we have $\Cl^{\ast}(D) =\Cl^{\ast_{_{\!f}}}(D)$.  (iv)$\Rightarrow$(iii)   In this situation, $ \Inv^{\ast_{_{\!f}}}(D)= \Inv^{\ast}(D)$,  i.e., 
for each $I \in\boldsymbol{{F}}(D)$,\ $I$ is a $\ast$--invertible $\ast$--ideal
if and  only if $I$ is a $\ast_{\!_f}$--invertible $\ast_{\!_f}$--ideal.   Recall that if an ideal $I$ is  $\ast$--invertible   (resp., $\ast_{\!_f}$--invertible),   then $I^\ast = I^v$ (resp.,  $I^{\ast_{\!_f}} = I^t$) (Proposition \ref{qsi-v}).  Therefore, from the previous considerations we deduce that if $I$ is   
 $\ast$--invertible,    then it is 
 $\ast_{\!_f}$--invertible. The converse is trivial.
\end{remark}

 From the previous   remark,   we deduce immediately the following two corollaries.  The first one generalizes Corollary \ref{v-princ}  (cf. also Proposition \ref{cl0} (2b)).

%COROLLARY 3.13
\begin{corollary}  Let  $D$  be a quasilocal integral
domain with principal maximal ideal  $M$. Then $\Cl^\ast(D) = 0 $ for all star operations $\ast$ on $D$. 
\end{corollary}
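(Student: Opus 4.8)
The plan is to show directly that $\Cl^v(D)=0$ and then invoke the general inclusion $\Cl^\ast(D)\subseteq\Cl^v(D)$. First I would note that the machinery of Remark~\ref{H} applies to $D$: writing $M=qD$ for the maximal ideal, $M$ is principal, hence divisorial, in particular a $t$--ideal; and since $D$ is quasilocal, $M$ contains every proper ideal, so $M$ is the \emph{unique} maximal $t$--ideal of $D$. Thus every maximal $t$--ideal of $D$ is a $v$--ideal, so condition (ii) of Remark~\ref{H} (in the form ``each maximal $t$--ideal is a $v$--ideal'') holds and $D$ is an \texttt{H}--domain. By condition (iii) of the same remark, a fractional ideal $I$ of $D$ is $v$--invertible if and only if it is $t$--invertible.

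Next I would compute $\Cl^v(D)$. Take any class in $\Cl^v(D)$, represented by a $v$--invertible $v$--ideal $I\in\F(D)$. By the previous step $I$ is $t$--invertible, so $(II^{-1})^t=D$; since $t$ is a (semi)star operation of finite type and $M$ is the only maximal $t$--ideal, this forces $II^{-1}\not\subseteq M$, and therefore $II^{-1}=D$ because $D$ is quasilocal and $II^{-1}\subseteq D$. Hence $I$ is invertible, and an invertible fractional ideal of a quasilocal domain is principal; so $[I]$ is the identity of $\Cl^v(D)$ and $\Cl^v(D)=0$. Finally, recalling from Section~2 that $\Pic(D)\subseteq\Cl^\ast(D)\subseteq\Cl^v(D)$ for every star operation $\ast$ on $D$, we conclude $\Cl^\ast(D)=0$ for all such $\ast$.

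I do not expect a genuine obstacle: the argument is essentially a short application of Remark~\ref{H} once the setup is in place. The one point deserving a moment's attention --- and the only place both hypotheses get used --- is the identification of $M$ as the unique maximal $t$--ideal: principality of $M$ makes it a $t$--ideal, quasilocality makes it the only one, and this is precisely what is needed both to bring Remark~\ref{H} to bear and to collapse $t$--invertibility to ordinary invertibility.
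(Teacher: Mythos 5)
Your proof is correct and follows essentially the same route as the paper's: both use Remark~\ref{H} (via the observation that the principal, hence divisorial, maximal ideal $M$ is the unique maximal $t$--ideal) to identify $D$ as an \texttt{H}--domain, reduce $v$--invertibility to ordinary invertibility, invoke the fact that invertible ideals of a quasilocal domain are principal, and finish with $\Cl^\ast(D)\subseteq\Cl^v(D)$. The only difference is cosmetic: you argue directly that $\Cl^v(D)=0$, whereas the paper first records $\Cl^t(D)=0$ and then uses the \texttt{H}--domain equality $\Cl^t(D)=\Cl^v(D)$; your write-up also makes explicit the step (collapsing $(II^{-1})^t=D$ to $II^{-1}=D$) that the paper leaves implicit.
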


\begin{proof} By Remark \ref{H} ((ii)$\Rightarrow$(i)),    since $M$ is principal,  $D$ is an  \texttt{H}--domain, and since $D$    is quasi--local,  $\Inv(D) =\Prin(D)$, and so $\Cl^t(D) = 0$.  Therefore $\Cl^\ast(D)=\Cl^t(D )=\Cl^v(D)=0$ for all star operations $\ast$.   
\end{proof} 

The next corollary generalizes Example \ref{krull} (2).

%COROLLARY 3.14
\begin{corollary}  If $D$ is a pre-Schreier domain and an  \texttt{H}--domain   (e.g., if $D$ is a UFD), then  
$\Cl^\ast(D) = 0 $ for    all star operations $\ast$ on $D$.
\end{corollary}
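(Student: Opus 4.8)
This is a short assembly of facts already in hand, so I would present it as a direct chain of inclusions and equalities rather than anything computational. The plan is to pin down the two class groups $\Cl^t(D)$ and $\Cl^v(D)$ under the two hypotheses separately, observe that they force $\Cl^v(D)=0$, and then invoke the general containment $\Cl^{\ast}(D)\subseteq\Cl^v(D)$ valid for \emph{every} star operation $\ast$ (not just those of finite type), which was recorded in the introduction to Section~2.

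First I would recall that since $D$ is a pre-Schreier domain, $\Cl^{t}(D)=0$ by \cite[Proposition 1.4]{[BZ]} (this was already noted just after the statement of Proposition~\ref{Proposition E}). Next, since $D$ is an \texttt{H}--domain, the equivalence (i)$\Leftrightarrow$(iv) in Remark~\ref{H} (the ``resp.'' version, i.e. taking $\ast=v$) gives $\Cl^{t}(D)=\Cl^{v}(D)$. Combining these two, $\Cl^{v}(D)=0$. Finally, for an arbitrary star operation $\ast$ on $D$ we have $\Pic(D)\subseteq\Cl^{\ast}(D)\subseteq\Cl^{v}(D)$ (recorded in the introductory discussion of Section~2 for any (semi)star operation), whence $\Cl^{\ast}(D)=0$, as desired.

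For the parenthetical ``(e.g., if $D$ is a UFD)'' I would just check that a UFD meets both hypotheses: a UFD is a GCD-domain, hence a pre-Schreier domain (as noted via \cite{[C]}, \cite[Theorem 1]{Dribin} in the discussion before Corollary~\ref{Corollary F}); and a UFD is a Krull domain, hence a Mori domain, hence an \texttt{H}--domain, as recorded in Remark~\ref{H}. So the general statement applies and recovers (and generalizes) Example~\ref{krull}(2).

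\textbf{Expected obstacle.} There is essentially no obstacle here: every ingredient --- pre-Schreier $\Rightarrow\Cl^t=0$, \texttt{H}--domain $\Rightarrow\Cl^t=\Cl^v$, and $\Cl^\ast\subseteq\Cl^v$ for all $\ast$ --- has already been established or cited above, so the only ``work'' is making sure the \texttt{H}--domain equivalence in Remark~\ref{H} is quoted in its $\ast=v$ form and that the containment $\Cl^\ast(D)\subseteq\Cl^v(D)$ is being used for a general (not necessarily finite type) star operation, which is exactly the generality in which it was stated.

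\begin{proof}
Since $D$ is a pre-Schreier domain, $\Cl^{t}(D)=0$ by \cite[Proposition 1.4]{[BZ]}. Since $D$ is an \texttt{H}--domain, the equivalence (i)$\Leftrightarrow$(iv) of Remark~\ref{H} (applied with $\ast = v$) gives $\Cl^{t}(D)=\Cl^{v}(D)$. Hence $\Cl^{v}(D)=0$. Now let $\ast$ be any star operation on $D$. As observed in Section~2, for every (semi)star operation $\ast$ on $D$ we have $\Pic(D)\subseteq \Cl^{\ast}(D)\subseteq \Cl^{v}(D)$; therefore $\Cl^{\ast}(D)=0$. Finally, a UFD is a GCD-domain, hence a pre-Schreier domain, and it is a Krull (indeed Mori) domain, hence an \texttt{H}--domain, so the hypotheses are satisfied in that case.
\end{proof}
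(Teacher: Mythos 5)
Your proof is correct and follows essentially the same route as the paper's: pre-Schreier gives $\Cl^{t}(D)=0$ via \cite[Proposition 1.4]{[BZ]}, the \texttt{H}--domain hypothesis together with Remark \ref{H} gives $\Cl^{v}(D)=\Cl^{t}(D)$, and the general containment $\Cl^{\ast}(D)\subseteq\Cl^{v}(D)$ finishes the argument. Your write-up is in fact slightly more explicit than the paper's terse version in spelling out where the \texttt{H}--domain hypothesis enters and in which generality the containment $\Cl^{\ast}(D)\subseteq\Cl^{v}(D)$ is being invoked.
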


\begin{proof} We have already observed above that $\Cl^t(D) =0$ for a  pre-Schreier domain $D$ \cite[Proposition 1.4]{[BZ]}. The parenthetical statement follows from the fact that a GCD-domain is a pre-Schreier domain, and  a UFD  is a GCD-domain and a Krull domain.
\end{proof}

We have seen that if $V$ is a valuation domain  with principal maximal ideal  $M$,
then $\Cl^{v}(V)=0$  (Proposition \ref{cl0} (2b)),   and we have found out that if the maximal ideal 
$M$ is idempotent and  branched,   then $\Cl^{v}(V)$ is isomorphic to $\mathbb{R}/
\mathit{G}$,  where $\mathit{G}$ is the value group  of a certain
nondiscrete rank-one valuation domain  (Corollary \ref{r/g}). We also know that if the maximal
ideal $M$ of $V$ is not idempotent then $M$ is principal. This leaves us with
the case of $V$ with $M$ unbranched (and thus  necessarily idempotent  \cite[Theorem 17.3 (b)]{[G]}). At present we know very
little about this case, but an example in \cite[Example 1]{AA84} gives a
valuation domain $V$ with unbranched maximal ideal $M$ such that $V$ affords
a nonfinitely generated $v$--ideal  $I$ with $II^{-1}=M$.    Consequently,    a
valuation domain with unbranched maximal ideal may have nonzero $v$--class
group.

\smallskip \smallskip

We end the paper with some observations in the   not   necessarily quasilocal setting, but  related to the valuation domain case. Recall that  Bouvier \cite[Proposition 2]{[B]}  proved that $D$ is a GCD-domain if and
only if $D$ is a P$v$MD  and $\Cl^{t}(D)=0$ and, moreover,  in \cite[Proposition 1.4]{[BZ]}   Bouvier and Zafrullah have shown that if $D$ is a pre-Schreier domain, then $\Cl^{t}(D)=0$.  An interested reader
may want to know if results similar to these  hold for $\Cl^{v}(D)$. It
appears that one answer may suffice for both questions. Because $\Cl^{t}(D)\subseteq \Cl^{v}(D)$,  %$Cl^{v}(D)=0\Rightarrow Cl^{t}(D)=0$
 and so
if $D$ is a P$v$MD with  $\Cl^{v}(D)=0$,   then $D$ is a GCD-domain with a slight
difference. The difference is that not every GCD-domain $D$ has $\Cl^{v}(D)=0$.   
One example comes from Theorem \ref{Theorem H} and another slightly more general
example follows from Corollary \ref{r/g}. \ Indeed, as every GCD-domain is also
\it a Schreier domain \rm (i.e., an integrally closed domain in which every element is primal in Cohn's sense \cite{[C]}),   and hence a pre-Schreier domain \cite{Dribin}, and as a P$v$MD is pre-Schreier (or,  Schreier) if and only if it is a GCD-domain \cite[Corollary 1.5]{[BZ]},  we conclude that for a pre-Schreier domain $D
$ it is not necessary that $\Cl^{v}(D)=0$ (for an explicit example of this type cf. for instance \cite{[C]}).  We can however make a somewhat
more general statement in this connection.  For this,   recall that an integral
domain $D$ is a \it $v$--domain \rm  if every nonzero finitely generated ideal of $D$
is $v$--invertible.

%PROPOSITION 3.15
\begin{proposition} \label{gcd}
Let $D$ be a $v$--domain  (e.g., a completely integrally closed integral domain \cite[Theorem 34.3]{[G]}).    If $\Cl^{v}(D)=0$,   then $D$ is a GCD-domain, but not
conversely.
\end{proposition}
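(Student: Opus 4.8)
The plan is to prove the two assertions separately: first that a $v$--domain $D$ with $\Cl^v(D)=0$ is a GCD-domain, and then that the converse fails by exhibiting (or recalling) a GCD-domain with nontrivial $v$--class group. For the forward implication, let $a,b$ be nonzero elements of $D$ and consider the finitely generated ideal $F:=(a,b)D$. Since $D$ is a $v$--domain, $F$ is $v$--invertible, hence $F^v$ is a $v$--invertible $v$--ideal, so $[F^v]\in\Cl^v(D)$. The hypothesis $\Cl^v(D)=0$ forces $F^v=dD$ for some $0\neq d\in D$. The claim is then that $d=\gcd(a,b)$: indeed, from $F\subseteq F^v=dD$ we get $d\mid a$ and $d\mid b$, so $d$ is a common divisor; and if $c\mid a$ and $c\mid b$, then $F\subseteq cD$, hence $F^v\subseteq (cD)^v=cD$ (principal ideals being divisorial), so $dD=F^v\subseteq cD$ and $c\mid d$. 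Thus $\gcd(a,b)$ exists for every pair $a,b$, which is precisely the definition of a GCD-domain. The routine point to check here is just that $F^v$ is indeed a $v$--invertible $v$--ideal so that its class makes sense in $\Cl^v(D)$; this is immediate from $v$--invertibility of $F$ and the identity $F^v=(F^v)^v$.

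For the converse, I would appeal to the examples already developed in the paper. By Theorem \ref{Theorem H}(2), a nondiscrete rank-one valuation domain $V$ with value group $\mathit{G}\subsetneq\mathbb{R}$ has $\Cl^v(V)=\mathbb{R}/\mathit{G}\neq 0$; on the other hand, every valuation domain is a GCD-domain, and being integrally closed it is in particular a $v$--domain (every nonzero finitely generated ideal of a valuation domain is principal, hence trivially $v$--invertible). So any such $V$ is simultaneously a GCD-domain (indeed a $v$--domain) with $\Cl^v(V)\neq 0$, witnessing that the implication cannot be reversed. A second, more general family of such witnesses is furnished by Corollary \ref{r/g}: a valuation domain $V$ with branched idempotent maximal ideal $M$ and $P$ the prime directly below $M$ satisfies $\Cl^v(V)\cong\mathbb{R}/\mathit{G}$ where $\mathit{G}$ is the value group of the rank-one valuation domain $V/P$, and again $V$ is a GCD-domain.

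I do not anticipate a serious obstacle here: the forward direction is the standard dictionary between principal $v$--ideals generated by finitely generated ideals and greatest common divisors, and the converse is settled by pointing at the valuation-domain computations already in hand. The only place requiring a sentence of care is making explicit why a valuation domain qualifies as a $v$--domain, so that the converse example lives in the intended class; this is handled by the observation that nonzero finitely generated ideals of a valuation domain are principal and hence $v$--invertible.
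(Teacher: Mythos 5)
Your proof is correct and follows essentially the same route as the paper: use $v$--invertibility of finitely generated ideals together with $\Cl^v(D)=0$ to conclude that $F^v$ is principal for every nonzero finitely generated $F$ (the paper phrases this via $I^{-1}$ being a principal $v$--invertible $v$--ideal and then $I^v=(I^{-1})^{-1}$, but this is the same argument), and then invoke the valuation-domain computations of Theorem \ref{Theorem H}(2) and Corollary \ref{r/g} for the failure of the converse. Your explicit verification that $d$ with $F^v=dD$ is a gcd of $a$ and $b$, and that a valuation domain is a $v$--domain, merely spells out steps the paper leaves implicit.
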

\begin{proof} Let $I$ be a nonzero finitely generated ideal of $D$. Then $
(II^{-1})^{v}=D$.  So $I^{-1}$, being a $v$--invertible $v$--ideal, must be
principal because $\Cl^{v}(D)=0$. So for every nonzero finitely generated
ideal $I$    of $D$,    we have that $I^{v}=(I^{-1})^{-1}$ is principal, which  makes $D$ a
GCD-domain. That the converse is not true follows from comments prior to
this proposition.
\end{proof}

 % REMARK 3.16
\begin{remark} \rm 
(1)   The previous, not very striking, statement gives us some
interesting candidates for the study of  the $v$--class group.

\begin{enumerate}
\item[(a)]  Nagata (in \cite{Nagata 1} and \cite{Nagata 2})  gave an example of a completely
integrally closed one-dimensional quasilocal integral domain $D$ that is not
a valuation domain. Obviously $D$ is not a GCD-domain (since,  as recalled above, a GCD-domain is a particular P$v$MD).   So   by Proposition \ref{gcd},  $\Cl^{v}(D)\neq 0$.
It   would  be of interest to find $\Cl^{v}(D)$ in this case.

\item[(b)]  Let $k$  be a field, let $Y, Z, X_1, X_2,\dots, X_n, \dots$  be indeterminates,  set $K:=k(Y, Z, X_1, X_2, \dots, X_n, \dots)$ and $R:=k(X_1, X_2,\dots, X_n, \dots)[Y,Z]_{(Y,Z)}$. Inside the field $K$, Heinzer and Ohm \cite[Section 2]{Heinzer-Ohm} and  
Mott and Zafrullah \cite[Example 2.1]{MZ}  consider a domain $D$ which
is not a P$v$MD. This domain $D$ is the intersection of the regular local ring $R$  with a denumerable family of  discrete valuation domains of rank   one  in $K$,  and so $D$ is 
 completely integrally closed. Not being a P$v$MD makes $D$  non-GCD,    and
so $\Cl^{v}(D)\neq 0$.  Again, it would be of interest to know $\Cl^{v}(D)$. 
\end{enumerate}

 The main question in both cases is: must $\Cl^{v}(D)$ be a homomorphic image
of $(\mathbb{R}, +)$ as we saw in the valuation domain  cases\!~?

(2)  Our study of $v$--class groups appears to raise a lot of other questions. We
mention some of those questions here.

\begin{enumerate}

\item[$\bullet$]  What is $\Cl^{v}(D[X])$ \ec in terms of  $\Cl^{v}(D)$\!~?  

\item[$\bullet$]   Halter-Koch in \cite[pages 167--185]{[HK]} talks
about valuation monoids. Consider ${\mathcal{S}}$ a valuation monoid
under addition. Defining the $v$--operation  on ${\mathcal{S}}$ and $\Cl^{v}({\mathcal{S}})$ in the obvious
\ec fashion,   find analogs of Theorems \ref{Theorem H} and \ref{cl v/p}.   Also,  study $\Cl^{v}(D[{\mathcal{S}}])$
when $\Cl^{v}(D)$  is known.     In particular,  find $\Cl^{v}(D[\mathbb{Q}^{+}])$ in terms of $\Cl^{v}(D)$. 

\end{enumerate}

\end{remark}

\ec

\medskip

\medskip

\end{document}